\date{\today}
\newcommand{\smallk}{{\mathbf{k}}}
\renewcommand{\preceq}{\preccurlyeq}
\newcommand{\supp}{\operatorname{supp}}
\newcommand{\truncof}{\triangleleft}
\newcommand{\trunceq}{\trianglelefteq}
\newcommand{\til}{\operatorname{TIL}}
\newcommand{\il}{\operatorname{IL}}
\newcommand{\Texp}{\mathbb{T}_{\exp}}
\newcommand{\cF}{\mathcal{F}}
\newcommand{\R}{\mathbb{R}}
\newcommand{\T}{\mathbb{T}}
\newcommand{\fg}{\mathfrak{g}}
\newcommand{\fm}{\mathfrak{m}}
\newcommand{\fn}{\mathfrak{n}}
\newcommand{\fp}{\mathfrak{p}}
\newcommand{\fq}{\mathfrak{q}}
\newcommand{\fG}{\mathfrak{G}}
\newcommand{\fM}{\mathfrak{M}}
\newcommand{\fN}{\mathfrak{N}}
\newcommand{\fP}{\mathfrak{P}}
\newcommand{\fQ}{\mathfrak{Q}}
\newcommand{\monm}{\mathfrak{m}}
\newcommand{\bigO}{\mathcal{O}}
\newcommand{\smallo}{{\scriptstyle \mathcal{O} }}
\newcommand{\ser}[1]{\sum_{\gamma} #1_\gamma t^\gamma}
\newcommand{\upper}{\hspace{-0.12cm}\uparrow}
\newcommand{\downer}{\hspace{-0.15cm}\downarrow}
\newcommand{\inv}{^{-1}}
\DeclareSymbolFont{der@m}{U}{fsy}{m}{n}
\DeclareMathSymbol{\derdelta}{\mathord}{der@m}{100}
\def \>{\rangle}
\newtheorem{theorem}{Theorem}[section]
\newtheorem{lemma}[theorem]{Lemma}
\newtheorem{corollary}[theorem]{Corollary}
\newtheorem{prop}[theorem]{Proposition}
\title{Truncation In Unions of Hahn Fields with a Derivation}
\author{Santiago Camacho}
\email{scamach2@illinois.edu}
\address{Department of Mathematics, University of Illinois at Urbana-Champaign, Urbana, Illinois 61801}
\begin{document}

\begin{abstract}
Truncation in Generalized Series fields is a robust notion, in the sense that it is preserved under various algebraic and some transcendental extensions. In this paper we study conditions that ensure that a truncation closed set extends naturally to a truncation closed differential ring, and a truncation closed differential field has a truncation closed Liouville closure. In particular, we introduce the Notion of IL-closedness in Unions of Hahn fields in order to determine that this condition is sufficient to preserve truncation in those two settings for constructions such as the field of logarithmic-exponential transseries $\mathbb{T}$.
\end{abstract}

\maketitle

\setcounter{tocdepth}{1}
\tableofcontents
\section{Introduction}

Mourgues and Ressayre~\cite{MR} showed that any real closed field is 
isomorphic to a
truncation closed subfield of a Hahn field over $\R$. 
The papers \cite{DMM, F, FKK, D} continue the study of
truncation closedness. The results 
are typically that truncation closedness is robust in the sense of being 
preserved under a variety of extensions. One significance of 
truncation closedness is that it enables transfinite induction to be imported as
a tool into valuation theory.
 

 There is increasing interest in Hahn fields with a `good'
derivation, and truncation closedness is potentially significant 
in that context for similar reasons. We show here that 
truncation closedness is preserved under certain extensions that involve
the derivation.  Our main goal is to establish results for the differential field  $\mathbb{T}$ of transseries in the sense of \cite{ADH}, but initially
we work in a simpler and rather general setting of Hahn fields with a 
`good' derivation. To apply results in that setting to $\T$ we use that
$\T$ is, roughly speaking, obtained by iterating a 
Hahn field construction: at each step one builds a Hahn field-with-derivation
on top of a previously constructed Hahn field-with-derivation. 

For subsets of $\T$ we introduce the condition 
of being \textbf{iteratively-logarithmically} closed, $\il$-closed for short. 
We prove two preservation results for truncation closed 
$\il$-closed subsets of $\mathbb{T}$ , $\til$-closed for short. 

\begin{theorem}
If $K$ is a $\til$-closed subfield of $\mathbb{T}$ containing $\R$, 
then the differential subfield of $\mathbb{T}$ generated by $K$ is $\til$-closed.
\end{theorem}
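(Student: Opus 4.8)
The plan is to leverage the general preservation theorem for truncation in Hahn fields equipped with a \good derivation developed in the earlier sections, and to transport it to $\T$ through the presentation of $\T$ as an iterated Hahn field construction. In that presentation $\T$ is a directed union of Hahn subfields $H_\lambda$, each obtained from its predecessors by a single Hahn field step, and the derivation $\derdelta$ of $\T$ is compatible with this tower; the point of $\il$-closedness is precisely that it records, level by level, the iterated-logarithm data one needs in order to differentiate inside the tower, so that a $\til$-closed subfield restricts coherently to a truncation closed, logarithmically closed subfield at each level.

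First, the bookkeeping. Given a $\til$-closed subfield $K \supseteq \R$ of $\T$, I would set $K_\lambda := K \cap H_\lambda$ and check that each $K_\lambda$ is a truncation closed subfield of $H_\lambda$ containing $\R$; this uses that every truncation of an element of $H_\lambda$ lies again in $H_\lambda$ and that such a truncation is the same whether computed in $H_\lambda$ or in $\T$ (the monomial orderings being compatible). The less routine half is to verify that $\il$-closedness of $K$ forces $K_\lambda$ to satisfy the logarithmic-closedness hypothesis that the general theorem requires at level $\lambda$: that for $f \in K_\lambda$ the relevant logarithms of the monomials of $\supp f$ already lie in $K_\lambda$.

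Second, I would identify the differential subfield $D$ of $\T$ generated by $K$ with the union $\bigcup_\lambda D_\lambda$, where $D_\lambda$ is the differential subfield of $H_\lambda$ generated by $K_\lambda$. One inclusion is immediate. For the other, every element of $D$ is produced from finitely many elements of $K$ and finitely many of their iterated $\derdelta$-derivatives by field operations; since the $K_\lambda$ are cofinal in $K$ and $\derdelta$ is compatible with the tower, all of this takes place inside a single $H_\lambda$, and here is where $\il$-closedness does the real work — it guarantees that differentiating $f \in K_\lambda$, which calls for the logarithmic derivatives, hence the logarithms, of the monomials of $\supp f$, does not push one to a strictly higher level in an uncontrolled way. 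With this identification, applying the general Hahn-field preservation theorem to each pair $(H_\lambda, K_\lambda)$ shows every $D_\lambda$ is truncation closed in $H_\lambda$, so $D = \bigcup_\lambda D_\lambda$ is truncation closed in $\T$ because truncation is computed level-wise. It then remains to see that $D$ is again $\il$-closed, which comes down to checking that the new monomials that differentiation introduces into supports have their iterated logarithms already present — once more a consequence of the $\il$-closedness built into the $K_\lambda$.

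The step I expect to be the main obstacle is the second one: pinning down exactly how $\derdelta$ interacts with the tower so that the differential subfield generated by $K$ neither spills past the levels that $\il$-closedness has prepared, nor has its own $\il$-closedness destroyed by the extra monomials that derivatives create in supports. The robustness of truncation closedness under the purely field-theoretic part of the construction is, by contrast, either classical or already handled in the general sections, so essentially the whole difficulty is concentrated in understanding the derivation against the iterated-logarithm structure that $\il$-closedness encodes.
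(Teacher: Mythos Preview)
Your plan has the right shape---decompose along the Hahn tower, control supports via $\il$-closedness, and assemble---but the load-bearing step does not go through as written. The ``general Hahn-field preservation theorem'' you intend to invoke at each level is Lemma~\ref{DifFieldGenTrunc}, and that lemma relies on the specific feature of the single-step derivation that $\supp f' \subseteq \supp f$ (equivalently, that the additive map $c$ lands in the \emph{coefficient field}). In the iterated setting this fails: viewing $H_\lambda=\smallk_n$ as the Hahn field $\smallk[[\fM^{(n)}]]$ over $\smallk=\R$, the logarithmic derivative $c(\fm)$ of a monomial $\fm$ lies in $\smallk_{n-1}$, not in $\R$, so $\supp f'\not\subseteq\supp f$ and Lemma~\ref{DifFieldGenTrunc} simply does not apply. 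If instead you view $\smallk_n$ as $\smallk_{n-1}[[\fM_n]]$, the lemma applies but yields only $\smallk_{n-1}$-truncation closedness, and bridging back to $\smallk$-truncation closedness via Lemma~\ref{trtr2} would require knowing that $D_n\subseteq D_{n-1}[[\fM_n]]$---i.e.\ that all $\smallk_{n-1}$-coefficients of elements of $D_n$ already lie in $D_{n-1}$---which is not obvious and is in fact close to what one is trying to prove. The paper's counterexample right after the definition of ``transerial'' shows that without $\il$-closedness the conclusion is false, so the gap is real, not cosmetic.

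The paper avoids this obstacle by working directly in $\smallk_*$ rather than level by level. The two key ingredients are Lemma~\ref{tL5}, which shows (by induction on the level $n$ containing $f$, and using the hypothesis $\int c(\fm)\in S$) that every truncation of $f'$ for $f\in\smallk[S]$ already lies in $\sum\smallk[S]\smallk[S]'$, and Lemma~\ref{tL6}, which shows $\supp(S')\subseteq\supp(S)^*$ when $S$ is $\il$-closed. Corollary~\ref{tL7} then builds the differential ring $\smallk\{S\}$ as an increasing union $S_0\subseteq S_1\subseteq\cdots$ of $\til$-closed sets, where $S_{m+1}$ adjoins to $\smallk[S_m]$ all truncations of derivatives of elements of $S_m$. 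The passage to $\T$ (as opposed to $\Texp$) then uses the up/down-shift isomorphisms and the variant Corollary~\ref{tL11} with $\derdelta=\fe\partial$. What you identified as ``the main obstacle'' is exactly right; what is missing is the concrete mechanism---Lemma~\ref{tL5}---that replaces the unavailable single-step lemma.
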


\noindent
Following Aschenbrenner and van den Dries \cite{AD}, an \textbf{$H$-field}
is an ordered differential field $K$ with field of constants $C$ such that
\begin{itemize}
\item[i)] $f\in K$, $f>C$ $\Rightarrow$ $f'>0$,
\item[ii)] $\bigO = C + \smallo$ where $\bigO :=\{f\in K: |f|<|c| \text{ for some } c\in C \}$, and $\smallo$ is the maximal ideal of the convex subring $\bigO$ of $K$.  
\end{itemize}

\noindent
It is well known that any ordered differential subfield of $\mathbb{T}$ that contains $\R$ is again an $H$-field.
An $H$-field $K$ is said to be \textbf{Liouville closed} if $K$ is real closed, and for each $f\in K^\times$ there exist $g,h\in K^\times$ such that 
$g'=f$ and $h^\dagger : = h'/h = f$. In the case of $H$-subfields of 
$\mathbb{T}$ containing $\R$ an equivalent condition is being real closed, closed under integration, and closed under exponentiation. Our second result regards diferential extensions of $H$-fields that are Liouville closed.  

\begin{theorem}
Let $K$ be a $\til$-closed differential subfield of $\mathbb{T}$ containing $\R$. Then the smallest Liouville closed differential subfield of $\mathbb{T}$ containing $K$  is also $\til$-closed. 
\end{theorem}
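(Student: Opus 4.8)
The plan is to realize the Liouville closure $L$ of $K$ inside $\T$ as the union of a continuous increasing chain of differential subfields $(K_\lambda)_{\lambda\le\mu}$ of $\T$ with $K_0=K$ and $K_\mu=L$, where at limit stages one takes unions and each successor $K_{\lambda+1}$ is obtained from $K_\lambda$ by one of three elementary operations: (i) passing to the real closure inside $\T$; (ii) adjoining an element $y\in\T$ with $y'\in K_\lambda$; or (iii) adjoining $\exp f$ for some $f\in K_\lambda$. In each case $K_{\lambda+1}$ is again a differential subfield of $\T$: in (i) because the derivation extends uniquely along an algebraic extension, in (ii) because $y'\in K_\lambda$, and in (iii) because $(\exp f)'=f'\exp f$ with $f'\in K_\lambda$. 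By the description of Liouville closedness for $H$-subfields of $\T$ containing $\R$ recalled above (real closed, closed under integration, closed under exponentiation), such a chain closes off to the desired $L$. Since a truncation of an element of a union $\bigcup_\nu K_\nu$ already occurs in some $K_\nu$, $\til$-closedness is preserved under unions of the chain; so it suffices to show that each of (i), (ii), (iii) carries a $\til$-closed differential subfield $K_\lambda\supseteq\R$ to a $\til$-closed one.

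For (i) I would invoke preservation of truncation-closedness under real (indeed algebraic) closure, available from the earlier literature (e.g.\ \cite{F}) and the earlier sections, and observe that real closure introduces no monomials at new levels of $\T$, so that $\il$-closedness is inherited as well. For (ii) I would reduce to the Hahn-field results of the earlier sections through the presentation of $\T$ as a union $\bigcup_n\T_n$ of Hahn fields with \good derivations that underlies the paper: the witness $y$, and hence $f$, lies in some $\T_n$, and $K_\lambda\cap\T_n$ is truncation closed in $\T_n$ because $K_\lambda$ is $\til$-closed; the general statement on closure under integration inside a Hahn field with a \good derivation then shows that the differential subfield of $\T_n$ generated by $y$ over $K_\lambda\cap\T_n$ is truncation closed, which places all truncations of $y$ in $K_{\lambda+1}$.

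Step (iii) is where the argument has real content. Given $f\in K_\lambda$, write $f=f_{\succ1}+c+f_{\prec1}$, where $f_{\succ1}$ collects the terms of $f$ whose monomial is $\succ1$, $c\in\R$ is the coefficient of the monomial $1$, and $f_{\prec1}$ is the infinitesimal remainder. Both $f_{\succ1}$ and $f_{\succ1}+c$ are truncations of $f$, hence lie in $K_\lambda$, so $f_{\prec1}=f-(f_{\succ1}+c)\in K_\lambda$ as well. Now $\exp f=e^{c}\,\exp(f_{\succ1})\,\exp(f_{\prec1})$, where $\exp(f_{\succ1})$ is a \emph{single monomial} of $\T$ which lies in $L$ (as $f_{\succ1}\in K_\lambda\subseteq L$ and $L$ is closed under exponentiation), $e^c\in\R$, and $\exp(f_{\prec1})=1+\varepsilon$ with $\varepsilon$ infinitesimal. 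Multiplying by the monomial $\exp(f_{\succ1})$ and by the scalar $e^c$ induces an order isomorphism of supports, hence carries truncations to truncations, so every truncation of $\exp f$ has the form $\exp(f_{\succ1})\cdot e^{c}\cdot v$ with $v$ a truncation of $\exp(f_{\prec1})$; it therefore suffices to show that all truncations of $\exp(f_{\prec1})$ lie in $L$. The decisive point is that $\exp(f_{\prec1})=\sum_m f_{\prec1}^m/m!$ introduces no monomial of a new level, so $f_{\prec1}$ and $\exp(f_{\prec1})$ lie in a common Hahn field $\T_n$ of the union; since $K_\lambda\cap\T_n$ is truncation closed in $\T_n$ and $f_{\prec1}'\in K_\lambda\cap\T_n$, the Hahn-field version of ``closure under exponentiation preserves truncation-closedness'' puts all truncations of $\exp(f_{\prec1})$ into the differential subfield of $\T_n$ generated by $\exp(f_{\prec1})$ over $K_\lambda\cap\T_n$, hence into $L$. (It is also convenient to record the elementary identity that for infinitesimal $\varepsilon$ and a monomial $\monm$ the truncation of $\exp\varepsilon$ at $\monm$ equals the truncation at $\monm$ of $\exp(\varepsilon|_{\succ\monm})$, so that only exponentials of truncations of $f_{\prec1}$, which already lie in $K_\lambda$, intervene.)

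The step I expect to be the main obstacle is not the truncation-closedness of $L$ as a valued field but the \emph{propagation of $\il$-closedness} through (i)--(iii): after each step one must check that $K_{\lambda+1}$ is again $\il$-closed, i.e.\ that the newly adjoined element and all of its truncations are compatible with the level filtration of $\T$ and with the logarithm maps linking consecutive levels --- exactly the structure that $\il$-closedness is designed to control, and the reason $\til$-closedness, rather than plain truncation-closedness, is the right hypothesis. I expect this to be handled by an auxiliary lemma, proved once in the general ``union of Hahn fields with a \good derivation'' framework, stating that real closure, adjunction of an integral, and adjunction of an exponential each preserve $\il$-closedness; granting it, the plan above proves the theorem, the only remaining work being the routine level-by-level verification of the hypotheses of the Hahn-field results, all of which follow from $K$ being $\til$-closed and containing $\R$.
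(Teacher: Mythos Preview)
Your architecture---building the Liouville closure as a transfinite chain where each successor step is real closure, adjunction of an integral, or adjunction of an exponential, and checking $\til$-closedness at each stage---is essentially the paper's strategy. The paper packages it dually via Zorn's lemma: take a maximal $\til$-closed differential subfield $F$ with $K\subseteq F\subseteq L$ and show $F$ is already Liouville closed by invoking, one at a time, the lemmas that real closure (Lemma~\ref{tL15}), exp-closure (Corollary~\ref{tL20}), and closure under antiderivatives (Lemma~\ref{tL21}) preserve $\til$-closedness. So the reduction is the same; what differs is how much of the actual work you have carried out.

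The substantive gap is step~(ii). You invoke a ``general statement on closure under integration inside a Hahn field with a \good derivation,'' but no such statement is available in the paper, and it is not clear one holds in that generality. The paper does \emph{not} integrate directly: it uses that, in the presentation $\smallk_n=\smallk_{n-1}[[\fM_n]]$, an antiderivative of $f_\fm\fm$ (with $\fm\ne 1$) is obtained by applying the operator $(I-a\partial)^{-1}$ with $a=-1/c(\fm)$, and then deploys the machinery of Section~5 (Lemma~\ref{OperatorLemma}, Theorem~\ref{thmA}) together with the level-shifting isomorphisms $\upper^n,\downer^n$ and the explicit commutative diagram at the end of Section~\ref{transeries}. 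Lemma~\ref{tL21} is where this is assembled for $\T$, and it is not a one-liner; in particular it forces the adjunction of all $\ell_n$ (Corollary~\ref{logs1}) before one can even state where the antiderivatives live. Your sketch does not touch this mechanism.

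Two smaller points. In step~(iii), adjoining a single $\exp f$ to $K_\lambda$ need not give a truncation closed field: your own ``elementary identity'' shows that truncations of $\exp(f_\prec)$ are truncations of $\exp(g)$ for proper truncations $g$ of $f_\prec$, so you need those exponentials already present. The paper makes this inductive hypothesis explicit (Lemma~\ref{tL18}) and then passes to the full $\exp$-extension at once (Corollary~\ref{tL19}); your one-element-at-a-time chain has to be organised so that exponentials of truncations are adjoined first. Finally, you correctly flag propagation of $\il$-closedness as the main obstacle but defer it entirely to an unproved ``auxiliary lemma.'' In the paper this is not a single general lemma: it is checked separately for each operation (the argument for real closure in Lemma~\ref{tL15} uses that monomials of the real closure have powers in $K$; for exponentiation, Lemma~\ref{tL18} uses the $\il$-hypothesis directly), and the integration case is again where the real content lies.
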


\section{Notations and Preliminaries}

\noindent
We let $m,n$ range over the set $\mathbb{N}=\{0,1,2,\dots\}$ of natural numbers.
By convention, ordered sets (and ordered abelian groups, ordered fields) are totally ordered. For an ordered set $S$ and $a\in S$ we set $S^{>a}=\{s\in S: s > a\}$ and similarly for $<,\leq, \geq, \neq$ in place of $>$.
 We let
$\fM$ denote a multiplicative ordered abelian group, whose elements $\fm$ are 
thought of as monomials; the
(strict) ordering on $\fM$ is denoted by $\prec$ (or $\prec_{\fM}$ if we need to
indicate $\fM$); likewise with $\fN$. Sometimes it is more convenient to use
additive notation, and so we let
$\Gamma$ denote an additive ordered abelian group (with zero element $0_\Gamma$ if we need to indicate the dependence on $\Gamma$); then $<$ rather than $\prec$
denotes the strict ordering of $\Gamma$; also, $\Gamma^>:=\Gamma^{>0}$,
and likewise for $<,\leq,\geq$ and $\neq$.  When $\Gamma$ is clear from the context, we let $\alpha, \beta, \gamma$ range over $\Gamma$. By ``ring'' we mean a 
commutative ring with $1$. Throughout, 
$\smallk$ is a field.

\subsection*{Hahn Series} The elements of the \textbf{Hahn field} $\smallk[[\fM]]$ are the formal series $f=\sum_{\fm} f_{\fm}\fm$ with coefficients 
$f_\fm\in \smallk$ and monomials $\fm\in \fM$ such that 
$$\supp f\ :=\ \{\fm:\ f_{\fm}\ne 0\}$$ is anti-well-ordered, that is,
$\supp f$ contains no strictly increasing
infinite sequence $\fm_0\prec \fm_1 \prec \fm_2 \prec\cdots$.
These series are added and multiplied in the way suggested by the series
representation, and this makes $\smallk[[\fM]]$ into a field with 
$\smallk$ as a subfield via the identification of $a\in \smallk$ with 
the series  $f=\sum f_{\fm}\fm$ such that $f_{1}=a$ and  $f_{\fm}=0$ for all
$\fm\ne 1$. 



\medskip\noindent
Often $\fM=t^\Gamma$ where $t$ is just a symbol, and
$\gamma\mapsto t^\gamma: \Gamma\to t^\Gamma=\fM$ is an {\em order-reversing\/}
isomorphism of $\Gamma$ onto $\fM$. Then we denote $\smallk[[\fM]]$
also by $\smallk((t^\Gamma))$, and write 
$f=\sum_{\fm} f_{\fm}\fm\in \smallk[[\fM]]$ as
$f=\sum_\gamma f_{\gamma}t^\gamma$, with $f_\gamma:= f_{\fm}$ for $\fm=t^\gamma$.
In this situation we prefer to take $\supp f$ as a subset of $\Gamma$ 
rather than of $\fM=t^\Gamma$: $\supp f =\{\gamma:\ f_{\gamma}\ne 0\}$, and the
anti-well-ordered requirement turns into the requirement that
$\supp f$ is a well-ordered subset of $\Gamma$. Of course, all this is only 
a matter of notation, and we shall freely apply results for
Hahn fields $\smallk((t^\Gamma))$ to Hahn fields $\smallk[[\fM]]$, since we 
can always pretend that $\fM$ is of the form $t^\Gamma$.

The Hahn field $\smallk((t^\Gamma))$ comes equipped with the (Krull) valuation 
$v:=v_\Gamma:\smallk((t^\Gamma))^\times\rightarrow \Gamma$ given by
$v(f)=\min \supp f$. Given any (Krull) valued field $K$ with valuation $v$
we have the binary (asymptotic) relations on $K$ given by 
\begin{itemize}
	\item $f\preceq g :\iff v(f)>v(g)$
    \item $f\prec g :\iff (f\preceq g\ \&\ g\not\preceq f)$
    \item $f\asymp g :\iff (f\preceq g\ \&\ g\preceq f)$
\end{itemize}
We write $\preceq_v$ or $\preceq_\Gamma$ if we need to specify the valuation,
or the value group $\Gamma$ in the case of the valued Hahn field 
$K=\smallk((t^\Gamma))$. 
Given a subset $S$ of a valued field $K$ and $f\in K$ we set $S^{\prec f}:=\{g\in K:\ g\prec f\}$. 


\subsection*{Differential rings} Let $R$ be a ring.  A \textbf{derivation} on 
$R$ is an additive map  $\partial: R \to R$ that satisfies the Leibniz rule: for all $a,b\in R$, 
\begin{itemize}
	\item $\partial(a+b) = \partial(a) + \partial(b)$
    \item $\partial(ab) = \partial(a)b + a\partial(b)$.
\end{itemize}
A \textbf{differential ring} is a ring together with a derivation on it.
Let $R$ be a differential ring. Unless we specify otherwise, we let
$\partial$ be the derivation of $R$, and for $a\in R$ we let $a'$ denote 
$\partial(a)$. The 
\textbf{ ring of constants of $R$} is the subring $C_R:=\{a\in R:\ a'=0\}$ of $R$. We also have the ring $R\{Y\}$ of differential
polynomials in the indeterminate $Y$ over $R$: as a ring, this is just the
polynomial ring $$R[Y, Y', Y'',\dots]\ =\ R[Y^{(n)}:n=0,1,2,\dots]$$ in the distinct 
indeterminates $Y^{(n)}$ over $R$, and it is made into a differential ring extension of $R$ by requiring that $(Y^{(n)})'= Y^{(n+1)}$ for all $n$.

For a differential field $K$ and $f\in K^\times$, we let
 $f^\dagger$ denote the logarithmic derivative $\partial(f)/f$, so
$(fg)^\dagger=f^\dagger + g^\dagger$ for $f,g\in K^\times$, and 
for a subset $S$ of $K^\times$ we
set $S^\dagger:=\{f^\dagger:\ f\in S\}$. 

\section{Small and Strong Additive Operators}
\noindent
Let $C$ be an additive abelian group, and $\fM$ a totally ordered monomial set.
We consider the Hahn space $C[[\fM]]$ of elements $f= \sum_{\fm\in\fM}f_{\fm}\fm$ with reverse well-ordered (or Noetherian) support. 
\medskip
\noindent
Let $(f_i)_{i\in I}$ be a family in $C[[\fM]]$. We say that this family is \textbf{summable} if 
\begin{itemize}
	\item $\bigcup_i \supp(f_i)$ is reverse well-ordered, and
    \item for each $\fm\in \fM$ there are finitely many $i$ such that $\fm\in \supp(f_i)$.
\end{itemize}
If $(f_i)_{i\in I}$ is summable, we define its sum $f=\sum_{i\in I} f_i$ by $f_\fm = \sum_i f_{i,\fm}$. Note that for $\fG$ a reverse well-ordered subset of $\fM$ and $(f_\fg)_{\fg\in \fG}$ a family in $C$, the notation $\sum_\fg f_\fg \fg$ coincides for the series and the sum of the family $(f_\fg\fg)_\fg$.

\medskip
\noindent
An \textbf{operator} on $C[[\fM]]$ is by definition a map $P: C[[\fM]]\rightarrow C[[\fM]]$. 
We call an operator $P:C[[\fM]]\rightarrow C[[\fM]]$ \textbf{additive} if $P(f+g) = P(f)+P(g)$ for all $f,g\in C[[\fM]]$. 
The set of all additive operators on $C[[\fM]]$ with the above addition and multiplication given by composition is a ring with the null operator $O$ as its zero element and the identity operator $I$ on $C[[\fM]]$ as its identity element. 

\subsection*{Strong Operators}
We call an operator $P$ \textbf{strongly additive}, or \textbf{strong} for short, if for every summable family $(f_i)$ in $C[[\fM]]$ the family $(P(f_i))$ is summable and $\sum P(f_i) = P(\sum f_i)$. Strong operators on $C[[\fM]]$ are additive, and the null operator $O$ and the identity operator $I$ are strong. The following is a routine consequence of the definition of ``strong''. 

\begin{lemma}
If $P,Q$ are strong operators on $C[[\fM]]$, then so are $P+Q$ and $PQ$.
\end{lemma}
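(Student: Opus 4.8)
The plan is to verify the two closure properties directly from the definition of strong operator, treating the sum $P+Q$ first and the composition $PQ$ second. In both cases the starting point is a summable family $(f_i)_{i\in I}$ in $C[[\fM]]$, and what must be checked is that the image family is again summable (the two bullet conditions: joint support reverse well-ordered, and finite multiplicity at each monomial) and that summation commutes with the operator.

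For $P+Q$: since $P$ and $Q$ are strong, both $(P(f_i))_i$ and $(Q(f_i))_i$ are summable. Then $\bigcup_i \supp\big((P+Q)(f_i)\big) \subseteq \bigcup_i \supp(P(f_i)) \cup \bigcup_i \supp(Q(f_i))$, and a finite union of reverse well-ordered subsets of $\fM$ is reverse well-ordered, so the first bullet holds; the multiplicity bound at a fixed $\fm$ likewise follows by adding the two finite bounds. Hence $((P+Q)(f_i))_i$ is summable. For the sum identity, I would compute coefficientwise: for fixed $\fm$,
\[
\Big(\sum_i (P+Q)(f_i)\Big)_\fm \;=\; \sum_i \big(P(f_i)\big)_\fm + \sum_i \big(Q(f_i)\big)_\fm \;=\; \big(P(\textstyle\sum_i f_i)\big)_\fm + \big(Q(\textstyle\sum_i f_i)\big)_\fm,
\]
using the definition of the sum of a summable family and strongness of $P$ and $Q$ separately; this equals $\big((P+Q)(\sum_i f_i)\big)_\fm$, giving the claim.

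For $PQ$: apply strongness of $Q$ to $(f_i)_i$ to conclude that $(Q(f_i))_i$ is summable with $\sum_i Q(f_i) = Q(\sum_i f_i)$; then apply strongness of $P$ to the summable family $(Q(f_i))_i$ to conclude that $(P(Q(f_i)))_i = ((PQ)(f_i))_i$ is summable with $\sum_i (PQ)(f_i) = P\big(\sum_i Q(f_i)\big) = P\big(Q(\sum_i f_i)\big) = (PQ)(\sum_i f_i)$. This is essentially just chaining the defining property twice.

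The only genuinely non-formal point — and the one I would state carefully rather than leave implicit — is the stability of the summability conditions under finite unions, needed in the $P+Q$ case: that the union of two reverse well-ordered subsets of $\fM$ is reverse well-ordered, and that the sum of two finite natural numbers is finite. The composition case needs nothing beyond re-reading the definition, so I expect no real obstacle there; the whole lemma is "routine" exactly as the text advertises, and the write-up should be correspondingly short.
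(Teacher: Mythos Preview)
Your proposal is correct and follows essentially the same approach as the paper: the same support/multiplicity containments for $P+Q$ and the same two-step chaining of strongness for $PQ$. If anything you are slightly more thorough, since you verify the sum identity $\sum_i (P+Q)(f_i) = (P+Q)(\sum_i f_i)$ explicitly, whereas the paper leaves it implicit.
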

\begin{proof}
Let $(f_i)_{i\in I}$ be a summable family in $\smallk[[\fM]]$. We have \[\bigcup_i \supp(P(f_i) + Q(f_i)) \subseteq \left( \bigcup_i \supp P(f_i) \right)\cup \left( \bigcup_i \supp Q(f_i) \right),\]
Which is reverse well ordered. and for each $\fn \in \fN$ we have 
\[\{i\in I: (P+Q)(f_i)_{\fn} \neq 0 \} \subseteq \{i\in I: P(f_i)_{\fn} \neq 0 \} \cup \{i\in I: Q(f_i)_{\fn} \neq 0 \},\]
which is finite, thus proving (1). For (2) $(Q(f_i))$ is summable by assumption, so $(P(Q(f_i))) = (P Q(f_i))$ is summable and 
\[\sum P(Q(f_i)) = P\left(\sum Q(f_i) \right)= PQ\left(\sum f_i\right).\qedhere\]
\end{proof}

\medskip\noindent
Thus the family of strong operators on $C[[\fM]]$ is a subring of the ring of additive operators on $C[[\fM]]$. Let $(P_i)_{i\in I}$ be a family of additive operators on $C[[\fM]]$. We say that the family $(P_i)$ of additive operators is \textbf{summable} if for all $f\in C[[\fM]]$ the family $(P_i(f))_{i\in I}$ is summable, and if in addition the map $f\mapsto \sum_i P_i(f)$ is strongly additive we say that the family $(P_i)$ is \textbf{strongly summable}.
We denote the map $f\mapsto \sum_i P_i(f)$ as $\sum_{i\in I} P_i$.
In any case, if $(P_i)$ is summable, then $\sum P_i$ is additive.

\begin{prop}\label{PropsExist}
 Let $(P_i)_{i\in I}$ be a family of additive operators on $C[[\fM]]$.
  \begin{enumerate}
  \item If I is finite, then $(P_i)_{i\in I}$ is summable.
    \item If $I$ and $J$ are disjoint sets, $(P_i)_{i\in I}$ is summable, and $(P_j)_{j\in J}$ is also a family of summable additive operators, then the family $(P_i)_{i\in I\cup J}$ is summable. 
    \item If $\sigma : I \rightarrow J$ is a bijection, and $(P_i)_{i\in I}$ is summable, then $(P_{\sigma\inv (j)})_{j\in J}$ is summable.
    \item Let $I = \bigcup_{j\in J} I_j$ with pairwise disjoint $I_j$. If $(P_i)_{i\in I}$ is summable, then $(P_i)_{i\in I_j}$ is summable for each $j\in J$ and $\sum_{j\in J}\sum_{i\in I_j}P_i = \sum_{i \in I }P_i$
\end{enumerate}
\end{prop}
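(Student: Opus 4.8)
The plan is to reduce each of the four clauses to the corresponding statement for summable \emph{families of series} in $C[[\fM]]$, applied to the family $(P_i(f))_{i\in I}$ for a fixed but arbitrary $f\in C[[\fM]]$. This works because an operator family $(P_i)_{i\in I}$ is, by definition, summable exactly when $(P_i(f))_{i\in I}$ is summable for every $f$, and two summable operator families coincide iff they agree at every $f$; so once the series-level statements are in hand, the rest is bookkeeping. Concretely I will use four elementary facts about a summable family $(g_i)_{i\in I}$ in $C[[\fM]]$: \emph{(i)} every finite family is summable; \emph{(ii)} every subfamily $(g_i)_{i\in I'}$ with $I'\subseteq I$ is summable; \emph{(iii)} summability is preserved under reindexing along a bijection; \emph{(iv)} if $I=\bigcup_{j\in J}I_j$ with the $I_j$ pairwise disjoint, then each $(g_i)_{i\in I_j}$ is summable, the family $\big(\sum_{i\in I_j}g_i\big)_{j\in J}$ is summable, and $\sum_{j\in J}\sum_{i\in I_j}g_i=\sum_{i\in I}g_i$. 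Each of \emph{(i)}--\emph{(iv)} is immediate from the definition of summability together with two closure properties of reverse well-ordered (Noetherian) subsets of $\fM$: a finite union of such sets is one, and any subset of one is one.

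Clauses (1)--(3) then fall out directly. For (1): for each $f$ the family $(P_i(f))_{i\in I}$ is finite, hence summable by \emph{(i)} --- its combined support is a finite union of Noetherian sets and each $\fm$ lies in at most $|I|$ of the supports. For (2): for each $f$ the combined support of $(P_i(f))_{i\in I\cup J}$ is the union of the two Noetherian sets coming from $I$ and from $J$, hence Noetherian, and $\{i\in I\cup J:\fm\in\supp P_i(f)\}$ is a disjoint union of two finite sets; so $(P_i)_{i\in I\cup J}$ is summable. For (3): reindexing along $\sigma$ changes neither the combined support nor, for any $\fm$, the cardinality of $\{i:\fm\in\supp P_i(f)\}$; this is \emph{(iii)}.

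The only clause needing a little care is (4). Fixing $f\in C[[\fM]]$ and applying \emph{(ii)} and \emph{(iv)} to the summable series family $(P_i(f))_{i\in I}$ gives, first, that each $(P_i(f))_{i\in I_j}$ is summable --- so, $f$ being arbitrary, $(P_i)_{i\in I_j}$ is a summable operator family and $Q_j:=\sum_{i\in I_j}P_i$ is a well-defined operator. Next, \emph{(iv)} shows the series family $\big(Q_j(f)\big)_{j\in J}=\big(\sum_{i\in I_j}P_i(f)\big)_{j\in J}$ is summable, so $(Q_j)_{j\in J}$ is a summable operator family and $\sum_{j\in J}Q_j$ is well defined. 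Finally, \emph{(iv)} again gives $\sum_{j\in J}Q_j(f)=\sum_{j\in J}\sum_{i\in I_j}P_i(f)=\sum_{i\in I}P_i(f)$ for every $f$, hence $\sum_{j\in J}\sum_{i\in I_j}P_i=\sum_{i\in I}P_i$ as operators.

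So the genuine content lies in \emph{(iv)} at the level of series, and that is the step I expect to write out with the most care. Checking that $\big(\sum_{i\in I_j}g_i\big)_{j\in J}$ is summable uses that, for each $\fm$, the finite set $\{i\in I:\fm\in\supp g_i\}$ meets only finitely many of the disjoint blocks $I_j$, so only finitely many $j$ contribute that monomial; and the identity $\sum_{j}\sum_{i\in I_j}g_i=\sum_i g_i$ is verified coefficientwise, where for each $\fm$ it is just the associativity of the \emph{finite} sum $\sum_{i\in I}(g_i)_\fm$ under the partition $(I_j)_j$. These series-level facts \emph{(i)}--\emph{(iv)} are entirely routine and could also simply be quoted from the standard theory of Hahn spaces; none of them --- and hence no step of the proof --- presents a real obstacle.
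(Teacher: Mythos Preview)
Your reduction to the series level is correct and is exactly the natural route: summability of an operator family is defined pointwise, so each clause follows from the corresponding fact for summable families in $C[[\fM]]$, and your handling of clause~(4) --- in particular verifying that $(Q_j)_{j\in J}$ is itself summable before forming $\sum_{j}Q_j$ --- is the one place that needs a moment's thought and you treat it properly. The paper itself gives no proof of this proposition at all; it is stated and then used, so there is nothing to compare against beyond noting that your argument supplies precisely the routine verification the paper omits.
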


\begin{prop}
Let $(P_i)_{i\in I}$ be a family of strong operators such that $(P_i)_{i\in I}$ is strongly summable and let $Q$ be a strong operator. Then $(P_iQ)_{i\in I}$ and $(QP_i)_{i\in I}$ are strongly summable  and equal to $(\sum_i P_i)Q$ and $Q\sum_iP_i$, respectively.
\end{prop}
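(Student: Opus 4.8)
The plan is to unwind the definitions and reduce everything to the Lemma that a composition of two strong operators is again strong. In both cases I will first check that the family of operators is summable by testing it against an arbitrary fixed $f\in C[[\fM]]$, then identify the resulting sum operator explicitly, and finally invoke the Lemma to upgrade additivity of that sum operator to strong additivity.

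For the family $(P_iQ)_{i\in I}$: fix $f\in C[[\fM]]$. Since $(P_i)_{i\in I}$ is summable, the family $(P_i(g))_{i\in I}$ is summable for every $g\in C[[\fM]]$; applying this to $g=Q(f)$ shows that $((P_iQ)(f))_{i\in I}=(P_i(Q(f)))_{i\in I}$ is summable. As $f$ was arbitrary, $(P_iQ)_{i\in I}$ is summable, and for each $f$ we have $\sum_i (P_iQ)(f)=\sum_i P_i(Q(f))=(\sum_i P_i)(Q(f))$, so as operators $\sum_i P_iQ=(\sum_i P_i)Q$. Because $\sum_i P_i$ is strong (this is exactly the strong-summability hypothesis on $(P_i)$) and $Q$ is strong, the Lemma gives that $(\sum_i P_i)Q$ is strong, i.e.\ the map $f\mapsto \sum_i (P_iQ)(f)$ is strongly additive. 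Hence $(P_iQ)_{i\in I}$ is strongly summable with sum $(\sum_i P_i)Q$.

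For the family $(QP_i)_{i\in I}$: fix $f\in C[[\fM]]$. The family $(P_i(f))_{i\in I}$ is summable since $(P_i)_{i\in I}$ is, and now I use that $Q$ is strong (not merely additive): applied to the summable family $(P_i(f))_{i\in I}$ this yields that $(Q(P_i(f)))_{i\in I}=((QP_i)(f))_{i\in I}$ is summable and that $\sum_i Q(P_i(f))=Q(\sum_i P_i(f))$. Letting $f$ vary, $(QP_i)_{i\in I}$ is summable and, as operators, $\sum_i QP_i=Q\circ(\sum_i P_i)$. Again $\sum_i P_i$ is strong by hypothesis and $Q$ is strong, so by the Lemma the composite $Q(\sum_i P_i)$ is strong; thus $(QP_i)_{i\in I}$ is strongly summable with sum $Q\sum_i P_i$.

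I do not expect a genuine obstacle here — the proof is bookkeeping with the definitions. The only points deserving care are: that summability of $(P_iQ)_{i\in I}$ follows cheaply by testing $(P_i)_{i\in I}$ against elements of the image of $Q$, whereas summability of $(QP_i)_{i\in I}$ and the interchange $\sum_i QP_i=Q\sum_i P_i$ genuinely require $Q$ to be strong so that $Q$ can be pulled through an infinite sum; and that one must cite the Lemma to pass from ``additive'' to ``strongly additive'' for the two limit operators $(\sum_i P_i)Q$ and $Q(\sum_i P_i)$. One should also confirm that the operator identities produced by the computation, namely $\sum_i P_iQ=(\sum_i P_i)Q$ and $\sum_i QP_i=Q\sum_i P_i$, are exactly the equalities asserted in the statement.
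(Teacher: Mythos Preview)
Your proof is correct. The paper states this proposition without proof, treating it as a routine consequence of the definitions; your argument is exactly the natural unwinding one would expect, and in particular your use of the preceding Lemma (that compositions of strong operators are strong) to upgrade the identified sum operators to strongly additive ones is the right move.
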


\subsection*{Small Operators}
\noindent
In this section $\fM$ is a commutative monomial group and $C$ a commutative ring. 
We make the ring of strong operators on $C[[\fM]]$ into a $C$-algebra by defining $(cP)(f):= c\cdot P(f)$ for $c\in C,P$ a strong operator on $C[[\fM]]$, and $f\in C[[\fM]]$. 

\noindent
An operator $P$ on $C[[\fM]]$ is said to be \textbf{small}\footnote{Small operators were defined in \cite[p. 66]{DMM} without requiring additivity, but this invalidates the assertion there about the inverse of $I-P$. Fortunately, this assertion is only used later in that paper for additive $P$, for which it is correct.} , if $P$ is additive and there is a reverse well-ordered $\fG\subseteq \fM^{\prec 1}$ such that $\supp(P(f)) \subseteq \fG\supp(f)$ for every $f\in C[[\fM]]$; any such $\fG$ is called a \textbf{witness} for $P$. 

\begin{prop}\label{fGfN} If $\fG$ and $\fN$ are reverse well-ordered subsets of $\fM$, then $\fG\fN$ is reverse well-ordered, and for every $\fm\in \fM$ there are finitely many pairs $(\fg,\fn)\in \fG\times\fN$ such that $\fg\fn=\fm$.
\end{prop}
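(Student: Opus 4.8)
\emph{Proof sketch.} The plan is to deduce both assertions from the standard behaviour of well-orders, transported through order reversal — this is a form of B.H. Neumann's lemma. I would first record the two facts I intend to use: since a subset $S\subseteq\fM$ is reverse well-ordered exactly when the reversed order $\succcurlyeq$ well-orders $S$, we get (i) every infinite reverse well-ordered $S\subseteq\fM$ contains an infinite strictly $\prec$-decreasing sequence (repeatedly take the $\prec$-maximum, which exists because reverse well-ordered sets have maximal, hence maximum, elements; the set stays infinite so the process does not terminate), and (ii) every sequence $(\fm_k)$ in a reverse well-ordered set has an infinite $\prec$-weakly-decreasing subsequence (let $\fm_{k_0}$ be the $\prec$-maximum of $\{\fm_k:k\ge 0\}$ with $k_0$ least, then $\fm_{k_1}$ the $\prec$-maximum of $\{\fm_k:k>k_0\}$ with $k_1$ least, and so on; the maximum over a smaller tail is $\preccurlyeq$ the previous one). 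Both are the usual facts about well-orders.

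For the first assertion I would argue by contradiction: suppose $\fp_0\prec\fp_1\prec\cdots$ in $\fG\fN$, and write $\fp_k=\fg_k\fn_k$ with $\fg_k\in\fG$, $\fn_k\in\fN$. Applying (ii) to $(\fg_k)$ in $\fG$, pass to a subsequence along which $(\fg_k)$ is $\prec$-weakly decreasing; applying (ii) again to $(\fn_k)$ along that subsequence, pass to a further subsequence along which, simultaneously, $\fg_{k_0}\succcurlyeq\fg_{k_1}\succcurlyeq\cdots$ and $\fn_{k_0}\succcurlyeq\fn_{k_1}\succcurlyeq\cdots$. Since the ordering of the abelian group $\fM$ is compatible with multiplication, $\fp_{k_i}=\fg_{k_i}\fn_{k_i}\succcurlyeq\fg_{k_{i+1}}\fn_{k_i}\succcurlyeq\fg_{k_{i+1}}\fn_{k_{i+1}}=\fp_{k_{i+1}}$ for all $i$; but a subsequence of a strictly $\prec$-increasing sequence is strictly $\prec$-increasing, so $\fp_{k_0}\prec\fp_{k_1}$ — a contradiction. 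The one place that needs care is exactly this nested extraction, which is what lets us make both factor-sequences weakly decreasing at once while the product is forced to increase.

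For the second assertion, fix $\fm\in\fM$ and suppose $\{(\fg,\fn)\in\fG\times\fN:\fg\fn=\fm\}$ is infinite. Since $\fn=\fm\fg^{-1}$ is determined by $\fg$, the set $T:=\{\fg\in\fG:\fm\fg^{-1}\in\fN\}$ is then an infinite subset of $\fG$, hence reverse well-ordered, so by (i) it contains a strictly $\prec$-decreasing sequence $\fg_0\succ\fg_1\succ\cdots$. As $\fg\mapsto\fm\fg^{-1}$ is order-reversing on $\fM$, the elements $\fn_k:=\fm\fg_k^{-1}$ form a strictly $\prec$-increasing sequence $\fn_0\prec\fn_1\prec\cdots$ lying in $\fN$, contradicting that $\fN$ is reverse well-ordered. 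This finishes both parts; beyond the subsequence extraction in the first part, the only thing to watch is the bookkeeping of the order reversals.
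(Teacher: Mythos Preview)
Your proof is correct and follows essentially the same strategy as the paper's: argue by contradiction, pick a strictly increasing product sequence, and extract monotone subsequences of the factors to force a contradiction. Two minor remarks: first, the paper gets by with a \emph{single} extraction---once $(\fg_{k_i})$ is weakly $\prec$-decreasing and the products $\fg_{k_i}\fn_{k_i}$ remain strictly increasing, the $(\fn_{k_i})$ are automatically strictly increasing in $\fN$, giving the contradiction directly without a second pass; your double extraction is harmless but redundant. Second, the paper's proof actually omits the ``finitely many pairs'' assertion entirely, so your argument for that part (via the order-reversing bijection $\fg\mapsto\fm\fg^{-1}$) fills a genuine gap in the printed proof.
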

\begin{proof}
Let $\fg_1\fn_1 \prec \fg_2\fn_2 \prec \cdots$ be an increasing family with $\fg_i \in \fG$ and $\fn_i\in \fN$ by reverse well-orderedness of $\fG$ there is a subsequence $\fg_{i_1} \succeq \fg_{1_2} \succeq\cdots$  of $\fg_1, \fg_2 , \ldots $ which is decreasing, but that would mean that the sequence $\fn_{1_2},\fn_{i_2}, \ldots$ is strictly increasing, a contradiction. 
\end{proof}

\noindent
Small operators on $C[[\fM]]$ are contained in the set of strong operators on $C[[\fM]]$:

\begin{lemma}
If $P$ is a small operator on $C[[\fM]]$, then $P$ is strong.
\end{lemma}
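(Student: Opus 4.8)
The plan is to fix a summable family $(f_i)_{i\in I}$ in $C[[\fM]]$, put $f:=\sum_i f_i$ and $\fN:=\bigcup_i\supp(f_i)$ (reverse well-ordered, and with $\supp f\subseteq\fN$ since $f_\fm=\sum_i (f_i)_\fm$), fix a witness $\fG\subseteq\fM^{\prec 1}$ for the small operator $P$, and then check the two summability conditions for $(P(f_i))_{i\in I}$ together with the identity $\sum_i P(f_i)=P(f)$. The first summability condition is immediate: $\bigcup_i\supp(P(f_i))\subseteq\bigcup_i\fG\supp(f_i)\subseteq\fG\fN$, which is reverse well-ordered by Proposition~\ref{fGfN}. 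For the second, fix $\fm\in\fM$; if $\fm\in\supp(P(f_i))$ then $\fm=\fg\fn$ for some $\fg\in\fG$ and $\fn\in\supp(f_i)\subseteq\fN$, and by Proposition~\ref{fGfN} only finitely many $\fn\in\fN$ occur in such a factorization of $\fm$, while each such $\fn$ lies in $\supp(f_i)$ for only finitely many $i$; hence $\fm\in\supp(P(f_i))$ for only finitely many $i$, so $(P(f_i))_{i\in I}$ is summable.

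The crux is the identity $P(f)=\sum_i P(f_i)$, which I would prove coefficientwise, the difficulty being that $P$ is only assumed finitely additive, so an a priori infinite sum must be localized to a finite index set. Fix $\fm\in\fM$ and let $\fn_1,\dots,\fn_k$ be the finitely many (Proposition~\ref{fGfN}) elements $\fn\in\fN$ admitting $\fg\in\fG$ with $\fg\fn=\fm$. The key observation is that whenever $h\in C[[\fM]]$ has $\supp h\subseteq\fN\setminus\{\fn_1,\dots,\fn_k\}$ one gets $\fm\notin\fG\supp(h)\supseteq\supp(P(h))$, hence $(P(h))_\fm=0$. Now decompose $f_i=g_i+h_i$ with $g_i:=\sum_{j=1}^k (f_i)_{\fn_j}\fn_j$, and similarly $f=g+h$ with $g:=\sum_{j=1}^k f_{\fn_j}\fn_j$; then $\supp h_i=\supp(f_i)\setminus\{\fn_1,\dots,\fn_k\}$ and $\supp h=\supp f\setminus\{\fn_1,\dots,\fn_k\}$ both avoid $\{\fn_1,\dots,\fn_k\}$, so finite additivity of $P$ gives $(P(f_i))_\fm=(P(g_i))_\fm$ and $(P(f))_\fm=(P(g))_\fm$. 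Since each $\fn_j$ lies in $\supp(f_i)$ for only finitely many $i$, the set $F:=\{i:g_i\neq 0\}$ is finite; and $f_{\fn_j}=\sum_{i\in I}(f_i)_{\fn_j}=\sum_{i\in F}(f_i)_{\fn_j}$ shows $g=\sum_{i\in F}g_i$. Applying finite additivity of $P$ over $F$ then yields $\sum_{i\in I}(P(f_i))_\fm=\sum_{i\in F}(P(g_i))_\fm=\bigl(P(\sum_{i\in F}g_i)\bigr)_\fm=(P(g))_\fm=(P(f))_\fm$. As $\fm$ was arbitrary, $P(f)=\sum_i P(f_i)$, completing the proof that $P$ is strong.

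I expect no serious obstacle beyond bookkeeping: the entire argument is the propagation of finiteness along $\fm\rightsquigarrow\{\fn_1,\dots,\fn_k\}\rightsquigarrow F$, resting only on Proposition~\ref{fGfN} and the definition of summability, plus the reduction of the infinite sum to the finite index set $F$ on which the finite additivity of $P$ applies; in particular no property of $P$ beyond additivity and the existence of a witness is used.
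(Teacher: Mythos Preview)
Your summability argument is essentially identical to the paper's: both show $\bigcup_i\supp(P(f_i))\subseteq\fG\fN$ is reverse well-ordered and then use the finiteness of factorizations $\fm=\fg\fn$ from Proposition~\ref{fGfN} together with the finiteness of $\{i:\fn\in\supp(f_i)\}$ to bound $\{i:\fm\in\supp(P(f_i))\}$.

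You go further than the paper, however, in that the paper's proof stops after establishing summability of $(P(f_i))_i$ and does not explicitly verify the identity $\sum_i P(f_i)=P(\sum_i f_i)$, which is part of the definition of ``strong''. Your coefficientwise localization---collecting the finitely many $\fn_1,\dots,\fn_k\in\fN$ that can feed into the $\fm$-coefficient via $\fG$, splitting $f_i=g_i+h_i$ and $f=g+h$ accordingly, observing $(P(h_i))_\fm=(P(h))_\fm=0$, and reducing to the finite index set $F=\{i:g_i\neq 0\}$ where finite additivity applies---is correct and is exactly the natural way to fill this gap. In that sense your write-up is more complete than the paper's.
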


\begin{proof}
Let $(f_i)_{i\in I}$ be a summable family. 
\[\bigcup_i \supp(P(f_i)) \subseteq \fG\bigcup_i \supp(f_i),\]
which is an reverse well-ordered subset of $\fM$. Let $\fm \in \fM$. Since there are only finitely many pairs $(\fg,\fn)\in \fG\times \bigcup_i \supp(f_i)$ and for each $\fn$ there are only finitely many $i$ such that $\fn\in \supp(f_i)$, there are only finitely many $i$ such that $\fm \in \supp(P(f_i))$. Thus $P$ is a strong operator. 
\end{proof}

\noindent
In fact the small operators on $C[[\fM]]$ form a subring of the strong operators on $C[[\fM]]$
\begin{lemma}
Let $P,Q$ be small operators, then so are $P+Q$ and $P Q$.
\end{lemma}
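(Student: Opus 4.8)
The plan is to prove the two closure statements separately, handling $P+Q$ first as the easier case and then $PQ$, where the real work lies.

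For $P+Q$: let $\fG$ be a witness for $P$ and $\fN$ a witness for $Q$, both reverse well-ordered subsets of $\fM^{\prec 1}$. I claim $\fG \cup \fN$ is a witness for $P+Q$. The set $\fG \cup \fN$ is reverse well-ordered (a finite union of reverse well-ordered sets is reverse well-ordered) and is contained in $\fM^{\prec 1}$. For any $f \in C[[\fM]]$ we have $\supp\big((P+Q)(f)\big) \subseteq \supp(P(f)) \cup \supp(Q(f)) \subseteq \fG\supp(f) \cup \fN\supp(f) \subseteq (\fG \cup \fN)\supp(f)$, and additivity of $P+Q$ is immediate. This settles the sum.

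For $PQ$: again let $\fG$ witness $P$ and $\fN$ witness $Q$. The natural candidate witness is the product set $\fG\fN = \{\fg\fn : \fg \in \fG, \fn \in \fN\}$. By Proposition~\ref{fGfN}, $\fG\fN$ is reverse well-ordered; and since $\fG, \fN \subseteq \fM^{\prec 1}$ and $\fM$ is an ordered group, every element $\fg\fn$ satisfies $\fg\fn \prec 1$, so $\fG\fN \subseteq \fM^{\prec 1}$. For $f \in C[[\fM]]$, first apply $Q$ to get $\supp(Q(f)) \subseteq \fN\supp(f)$, then apply $P$ to get $\supp\big(P(Q(f))\big) \subseteq \fG\supp(Q(f)) \subseteq \fG\big(\fN\supp(f)\big) = (\fG\fN)\supp(f)$, using associativity of the complex product in the ordered group $\fM$. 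Additivity of $PQ$ follows since $P$ and $Q$ are additive. Hence $\fG\fN$ is a witness for $PQ$, so $PQ$ is small.

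The only genuine subtlety — and thus the main point to be careful about — is verifying that $\fG\fN$ is both reverse well-ordered and contained in $\fM^{\prec 1}$; the former is exactly the content of Proposition~\ref{fGfN} (the finiteness-of-factorizations clause is not even needed here, only the reverse well-orderedness), and the latter uses that the ordering on $\fM$ is compatible with the group multiplication so that $\fg \prec 1$ and $\fn \prec 1$ give $\fg\fn \prec \fn \prec 1$. Everything else is a routine chase through the support inclusions. I expect no real obstacle beyond bookkeeping with the complex products $\fG\supp(f)$ and keeping the order of application of $P$ and $Q$ straight.
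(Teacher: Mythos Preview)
Your proof is correct and follows exactly the paper's approach: the paper's proof is a one-liner asserting that $\fG\cup\fN$ and $\fG\fN$ are witnesses for $P+Q$ and $PQ$ respectively, and you have simply spelled out the verifications (reverse well-orderedness, containment in $\fM^{\prec 1}$, and the support inclusions) that the paper leaves implicit.
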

\begin{proof}
Assume $P$ and $Q$ are small and let $\fG$ be a witness for $P$ and $\fN$ a witness for $Q$, then $\fG\cup \fN$ and $\fG \fN$ are witnesses for $P+Q$ and $P Q$ respectively. 
\end{proof}

\noindent
The following proposition is useful for constructing inverses of certain operators, the statement appears originally in \cite{N} and is usually refered to as Neumann's Lemma. A proof using  additive notation appears in \cite[Chapter 5]{G}.
\begin{prop}\label{fGstar} Let $\fG$ be an reverse well-ordered subset of $\fM^{\prec 1}$. Then $\fG^*:=\bigcup_n \fG^n$ is reverse well-ordered. Moreover, for any $\fm\in \fM$ there are only finitely many tuples $(n,\fg_1, \ldots ,\fg_n)$ such that $\fg_1, \ldots \fg_n \in \fG$ and $\fg_1\cdots \fg_n = \fm$.
\end{prop}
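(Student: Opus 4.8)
The plan is to reduce to a statement about the infinite union, and then settle that by a well-quasi-order argument.

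First, by induction on $n$, using Proposition~\ref{fGfN} applied to $\fG^{n}=\fG^{n-1}\fG$, each $\fG^{n}$ is reverse well-ordered and every $\fm\in\fM$ has only finitely many factorizations $\fm=\fg_{1}\cdots\fg_{n}$ with $\fg_{i}\in\fG$; hence the same holds for any finite union $\bigcup_{n\le N}\fG^{n}$, which is therefore reverse well-ordered as well. So it suffices to prove (i) $\fG^{*}$ is reverse well-ordered, and (ii) each $\fm$ lies in $\fG^{n}$ for only finitely many $n$: given (ii), every factorization of $\fm$ lives in one such finite union, and (i) together with the count just noted yields the ``Moreover''.

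For the remaining work I would pass to additive notation: write $\fM=t^{\Gamma}$ and $\fG=t^{A}$ with $A\subseteq\Gamma^{>0}$ well-ordered, so that $\fG^{*}=t^{A^{\Sigma}}$ where $A^{\Sigma}$ is the submonoid of $\Gamma$ generated by $A$; then (i) and (ii) become the assertions that $A^{\Sigma}$ is well-ordered and that each $\gamma$ has only finitely many representations $\gamma=\alpha_{1}+\cdots+\alpha_{n}$ with $\alpha_{i}\in A$. I would obtain both from Higman's Lemma. Being well-ordered, $A$ is a well-quasi-order, so the set of finite words over $A$ with the domination ordering ($u$ dominated by $v$ iff some subsequence $v_{i_{1}},\dots,v_{i_{|u|}}$ of $v$ satisfies $u_{k}\le v_{i_{k}}$ for all $k$) is again a well-quasi-order. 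The summation map $\Sigma$ from words to $\Gamma$ is monotone for these orderings, since the entries of $v$ not used in a domination lie in $A$ and are thus positive, and $\Sigma$ is strictly monotone between words representing distinct multisets. Now a strictly descending sequence in $A^{\Sigma}$ would lift to a sequence of words no two of which are domination-comparable, contradicting the well-quasi-order property; so $A^{\Sigma}$ is well-ordered. Likewise an infinite fibre $\Sigma^{-1}(\gamma)$ would contain an infinite domination-increasing sequence of pairwise distinct multisets, along which $\Sigma$ strictly increases --- impossible. Hence $\gamma$ comes from only finitely many multisets, a fortiori from finitely many ordered tuples (and finitely many lengths), giving (i), (ii) and the ``Moreover''.

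The crux is this last argument. The bookkeeping in the first paragraph is routine, but neither well-orderedness of $\fG^{*}$ nor finiteness of the fibres is local --- a would-be bad sequence in $\fG^{*}$ can involve factorizations of unbounded length --- so some infinitary input is unavoidable, and the hypothesis $\fG\subseteq\fM^{\prec 1}$ (that is, $A\subseteq\Gamma^{>0}$) is exactly what makes $\Sigma$ monotone and the statement correct; without it the conclusion fails. If one prefers to avoid Higman's Lemma, one can instead run Neumann's original argument, reducing to the case where $\Gamma$ is archimedean --- there $n\cdot\min A\le\gamma$ bounds $n$ outright and one is back inside a finite union, handled as in the first paragraph --- by stratifying factorizations along the convex subgroups of $\Gamma$; see \cite{N} and \cite[Chapter~5]{G}.
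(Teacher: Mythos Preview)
The paper does not actually supply a proof of this proposition: it is stated without proof, attributed to Neumann~\cite{N}, with a pointer to a proof in additive notation in \cite[Chapter~5]{G}. So there is nothing to compare against except those references.

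Your argument via Higman's Lemma is correct and is one of the standard routes to Neumann's Lemma. One small imprecision: when you lift a strictly decreasing sequence $\gamma_{1}>\gamma_{2}>\cdots$ to words $w_{1},w_{2},\dots$, you do not get an antichain (``no two of which are domination-comparable''); you get a \emph{bad sequence}, i.e.\ one with no $i<j$ such that $w_{i}$ is dominated by $w_{j}$. That is exactly what the well-quasi-order property forbids, so the conclusion stands, but the phrasing should be adjusted. The fibre argument is fine: if $w_{i}\neq w_{j}$ and $w_{i}$ is dominated by $w_{j}$, then either $|w_{i}|<|w_{j}|$ and the unused (positive) entries force $\Sigma(w_{i})<\Sigma(w_{j})$, or $|w_{i}|=|w_{j}|$ and the domination is componentwise, again giving strict inequality. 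Your closing remark about the archimedean reduction is precisely the line taken in the references the paper cites, so you have in fact covered both the paper's (outsourced) approach and an alternative.
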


\begin{prop}Let $(P_n)$ be a family of small operators such that there is $\fP\subseteq \fM^{\prec 1}$ with $\fP^n$ a witness for $P_n$. Then $(P_n)$ is strongly summable.
\end{prop}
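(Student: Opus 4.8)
The plan is to verify the two clauses in the definition of \textbf{strongly summable} in turn: first that the family $(P_n)$ is summable, i.e.\ that $(P_n(f))_n$ is summable for every $f\in C[[\fM]]$, and then that the resulting additive operator $Q:=\sum_n P_n$ is strong. The three combinatorial inputs are Neumann's Lemma (Proposition~\ref{fGstar}) applied to $\fP$, Proposition~\ref{fGfN} applied to the reverse well-ordered set $\fP^*:=\bigcup_n\fP^n$ together with a support, and the pointwise finiteness built into summability of a family.

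For the first clause, fix $f\in C[[\fM]]$. Since $\fP^n$ is a witness for $P_n$ we have $\supp(P_n(f))\subseteq \fP^n\supp(f)$, hence $\bigcup_n\supp(P_n(f))\subseteq \fP^*\supp(f)$, where $\fP^*$ is reverse well-ordered by Proposition~\ref{fGstar} and so $\fP^*\supp(f)$ is reverse well-ordered by Proposition~\ref{fGfN}. For the pointwise condition, if $\fm\in\supp(P_n(f))$ then $\fm=\fq\fn$ for some $\fq\in\fP^*$ and $\fn\in\supp(f)$; there are finitely many such pairs $(\fq,\fn)$ by Proposition~\ref{fGfN}, and by the ``moreover'' clause of Proposition~\ref{fGstar} each $\fq\in\fP^*$ lies in $\fP^n$ for only finitely many $n$ (each such $n$ furnishes at least one tuple $(n,\fg_1,\dots,\fg_n)$ with $\fg_1\cdots\fg_n=\fq$). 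So only finitely many $n$ satisfy $\fm\in\supp(P_n(f))$, and $(P_n(f))_n$ is summable. Hence $Q=\sum_n P_n$ is a well-defined additive operator with $\supp(Q(f))\subseteq\fP^*\supp(f)$ for every $f$.

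For the second clause, let $(f_i)_{i\in I}$ be a summable family and put $\mathfrak{H}:=\bigcup_i\supp(f_i)$, which is reverse well-ordered. The crux is that the doubly-indexed family $\bigl(P_n(f_i)\bigr)_{(n,i)\in\N\times I}$ is summable: its supports lie in $\fP^*\mathfrak{H}$, reverse well-ordered by Propositions~\ref{fGstar} and~\ref{fGfN}; and if $\fm\in\supp(P_n(f_i))$ then $\fm=\fq\fn$ with $\fq\in\fP^*$ and $\fn\in\supp(f_i)$, where there are finitely many such pairs $(\fq,\fn)$ (Proposition~\ref{fGfN}), finitely many $n$ with $\fq\in\fP^n$ for each such $\fq$ (Proposition~\ref{fGstar}), and finitely many $i$ with $\fn\in\supp(f_i)$ for each such $\fn$ (summability of $(f_i)$), so only finitely many $(n,i)$ have $\fm\in\supp(P_n(f_i))$. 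Granting this, $(Q(f_i))_i$ is summable, since $\bigcup_i\supp(Q(f_i))\subseteq\fP^*\mathfrak{H}$ and $\fm\in\supp(Q(f_i))$ forces $\fm\in\supp(P_n(f_i))$ for some $n$; and for each $\fm\in\fM$ the identity $\bigl(\sum_i Q(f_i)\bigr)_\fm=\bigl(Q(\sum_i f_i)\bigr)_\fm$ follows because both sides equal $\sum_{(n,i)}\bigl(P_n(f_i)\bigr)_\fm$, a sum over $\N\times I$ with only finitely many nonzero terms by the crux, where for the right-hand side we use that each $P_n$ is strong (small operators are strong) to write $P_n(\sum_i f_i)=\sum_i P_n(f_i)$ and then interchange the two resulting finite summations. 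This shows $Q$ is strong, completing the proof that $(P_n)$ is strongly summable.

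I expect the main obstacle to be exactly the ``crux'' above: getting the three finiteness facts --- finiteness of the factorizations $\fm=\fq\fn$ in $\fP^*\mathfrak{H}$, finiteness of $\{\,n:\fq\in\fP^n\,\}$, and the pointwise finiteness inside summability of $(f_i)$ --- to interlock so as to bound the number of pairs $(n,i)$ contributing to a given monomial. The surrounding steps (reverse well-orderedness of the relevant product sets, the coefficientwise identity, and reducing the interchange to a finite rearrangement in $C$) are routine support bookkeeping.
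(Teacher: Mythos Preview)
Your proof is correct. The paper states this proposition without proof, so there is nothing to compare against; your argument is precisely the natural one the reader is implicitly expected to supply, combining Proposition~\ref{fGstar} (reverse well-orderedness of $\fP^*$ and finiteness of factorization tuples) with Proposition~\ref{fGfN} (finiteness of factorizations in a product of reverse well-ordered sets) and the strongness of each small $P_n$. The ``crux'' you flag --- bounding the contributing pairs $(n,i)$ at a fixed monomial by chaining the three finiteness facts --- is handled cleanly, and the final coefficientwise interchange is justified exactly because that set of pairs is finite.
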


\begin{prop}\label{cnPn}
If $(c_n)$ is a sequence of elements in $C$ and $P$ a small operator. Then $ (c_nP^n)$ is strongly summable.
\end{prop}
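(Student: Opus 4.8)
The plan is to reduce this to the preceding proposition --- the one asserting that a family $(P_n)$ of small operators admitting a common ``power witness'' (some $\fP$ with $\fP^n$ a witness for $P_n$) is strongly summable --- which in turn rests on Neumann's Lemma (Proposition \ref{fGstar}). First I would fix a witness $\fG$ for $P$: a reverse well-ordered $\fG\subseteq\fM^{\prec 1}$ with $\supp(P(f))\subseteq\fG\supp(f)$ for all $f\in C[[\fM]]$. Since $\fM$ is an ordered abelian group, $\fM^{\prec 1}$ is closed under multiplication, so $\fG^n\subseteq\fM^{\prec 1}$ for every $n\ge 1$; by Proposition \ref{fGfN} applied inductively, $\fG^n$ is reverse well-ordered; and by the composition lemma for small operators, $\fG^n$ is a witness for $P^n$.

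Next I would check that $c_nP^n$ is itself a small operator with the same witness $\fG^n$. It is additive because scalar multiplication distributes over addition in $C[[\fM]]$, and multiplying a series by the constant $c_n\in C$ cannot enlarge its support, so $\supp\big((c_nP^n)(f)\big)\subseteq\supp\big(P^n(f)\big)\subseteq\fG^n\supp(f)$. Hence, taking $\fP:=\fG$, the family $(c_nP^n)_{n\ge 1}$ satisfies the hypotheses of the preceding proposition and is strongly summable.

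It remains to adjoin the degenerate term $n=0$. The operator $c_0P^0=c_0I$ is strong, being a scalar multiple of the identity; so by the finite and disjoint-union parts of Proposition \ref{PropsExist} the family $(c_nP^n)_{n\ge 0}$ is summable, and by the partition part of Proposition \ref{PropsExist} applied to $\N=\{0\}\cup\N^{\ge 1}$ we have $\sum_{n\ge 0}c_nP^n=c_0I+\sum_{n\ge 1}c_nP^n$ as operators. Both summands are strong, so their sum is strong by the lemma that sums of strong operators are strong, which is exactly what strong summability of $(c_nP^n)_{n\ge 0}$ means. The only real content is the reduction in the first two paragraphs (ultimately Neumann's Lemma); the main nuisance to watch is the index $n=0$, where $\fG^0=\{1\}$ fails to lie in $\fM^{\prec 1}$, so that term genuinely has to be peeled off and handled separately from the appeal to the preceding proposition.
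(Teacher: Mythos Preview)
Your argument is correct. The paper states Proposition~\ref{cnPn} without proof, so there is nothing to compare against directly; your reduction to the immediately preceding proposition (a common power witness $\fP=\fG$ for the family $(c_nP^n)_{n\ge 1}$, via Neumann's Lemma) is exactly the argument the paper's placement of the statement suggests, and your separate handling of the $n=0$ term is the right way to deal with the fact that $\fG^0=\{1\}\not\subseteq\fM^{\prec 1}$.
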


\begin{prop} Let $P$ be a small operator, then $I-P:C[[\fM]]\rightarrow C[[\fM]]$ is bijective with inverse given by $\sum_n P^n$. Moreover $(I-P)$ and its inverse are strong. 
\end{prop}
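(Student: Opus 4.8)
The plan is to run the formal geometric series argument $(I-P)^{-1}=\sum_n P^n$, with every infinite manipulation justified by the summability results established above. First I would check that $\sum_n P^n$ makes sense as a strong operator. Fix a witness $\fG\subseteq\fM^{\prec1}$ for $P$; since a composition of small operators with witnesses $\fG$ and $\fN$ admits $\fG\fN$ as a witness, an induction on $n$ shows that $\fG^n$ is a witness for $P^n$. Hence, by Proposition \ref{cnPn} with every $c_n=1$ (or by the proposition immediately preceding it), the family $(P^n)_{n\in\N}$ is strongly summable, so $S:=\sum_n P^n$ is a well-defined strong operator. The operator $I-P$ is strong as well, being the sum of the strong operators $I$ and $-P=(-1)P$; this settles the ``moreover'' clause once $S$ is shown to be the inverse.

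Next I would verify $(I-P)S=I$ and $S(I-P)=I$. Applying the proposition that lets one pull a strong operator $Q$ in and out of a strongly summable family — so that $(QP_i)_i$ and $(P_iQ)_i$ are strongly summable with sums $Q\sum_iP_i$ and $(\sum_iP_i)Q$ — with $Q=P$ gives that $(P^{n+1})_{n\in\N}$ is strongly summable and $PS=SP=\sum_n P^{n+1}$. Reindexing via $n\mapsto n+1$ and splitting off the $n=0$ term, both permitted by parts (3) and (4) of Proposition \ref{PropsExist}, yields
\[
S\ =\ \sum_{n\in\N}P^n\ =\ P^0+\sum_{n\in\N}P^{n+1}\ =\ I+PS\ =\ I+SP .
\]
Therefore $(I-P)S=S-PS=I$ and $S(I-P)=S-SP=I$, so $I-P$ is a bijection of $C[[\fM]]$ onto itself with two-sided inverse $S=\sum_n P^n$.

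I expect the content here to be entirely bookkeeping: the one place that genuinely leans on the earlier theory is the claim that $(P^n)_{n\in\N}$ is strongly summable, which rests on Neumann's Lemma (Proposition \ref{fGstar}) guaranteeing that $\fG^\ast=\bigcup_n\fG^n$ is reverse well-ordered. After that, each rearrangement in the display is exactly an instance of Proposition \ref{PropsExist} or of the composition proposition for strongly summable families, and no new support estimate is required.
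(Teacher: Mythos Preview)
Your argument is correct and, at the core, coincides with the paper's: both invoke Proposition~\ref{cnPn} with $c_n=1$ to get that $(P^n)_n$ is strongly summable, which is the substantive step resting on Neumann's Lemma. The only difference is emphasis: you spell out the algebraic verification of $(I-P)\sum_nP^n=I=\big(\sum_nP^n\big)(I-P)$ via the composition and reindexing propositions, whereas the paper treats that identity as immediate and instead writes out a direct support estimate $\supp\big((I-P)^{-1}f\big)\subseteq\fG^\ast\supp f$ to confirm strongness of the inverse---an argument which is in fact already subsumed by the ``strongly summable'' conclusion of Proposition~\ref{cnPn}. Neither version adds anything beyond bookkeeping once Proposition~\ref{cnPn} is in hand.
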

\begin{proof} The existence of the inverse for $(I-P)$ follows from Proposition \ref{cnPn} by taking $c_n = 1$ for all $n$. We only prove strong additivity of the inverse $Q= (I-P)\inv$. Let $(f_i)_i$ be a summable family in $\smallk[[\fM]]$. We claim that the family $(Q(f_i))_i$ is summable. For every $f\in \smallk[[\fM]]$ the support of $Q(f)$ is contained in $\fG^*\supp(f)$. Thus 
\[\bigcup_i \supp(Q(f_i))\subseteq \bigcup_i \fG^*\supp(f_i) = \fG^*\bigcup_i \supp(f_i),\]
which is reverse well-ordered. Now let $\fn\in \fM$. If $\fn\in \supp(Q(f_i))$ then there are $g_i\in \fG^*$ and $\fm_i\in \supp(f_i)$ such that $g_i\fm_i= \fn$. There are only finitely many pairs $(g,\fm)\in \fG^*\times \bigcup_i(\supp(f_i))$ such that $g\fm= \fn$. Since every $\fm$ appears only in finitely many of the sets $\supp(f_i)$ we conclude that $\fn$ appears in the support of $Q(f_i)$ for only finitely many $i\in I$. Thus $(Q(f_i))_i$ is a summable family and it is easy to see that its sum is $Q(\sum_if_i)$.
\end{proof}


\noindent
Note that $Q(I-P)\inv$ is a small operator with witness $\fN\fG^*$.

\begin{lemma}Let $(Q_n)_{n\geq 1}$ be a family of small operators and $\fQ\subseteq \fM^{\prec 1}$ such that $\fQ^n$ is a witness for $Q_n$ and let $(P_i)_{i\in I}$ be a family of small operators that share a common witness $\fP$. If both families $(Q_n)$ and $(P_i)_{i\in I}$ are strongly summable, then $(P_iQ_n)_{i,n}$ is strongly summable and $\sum_{i,n} P_iQ_n = \sum_i P_i \sum_n Q_n$. 
\end{lemma}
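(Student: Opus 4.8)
The plan is to prove the lemma in two stages: first, an analytic stage showing that the double-indexed family $(P_iQ_n)_{i,n}$ is summable; second, a formal stage in which strong summability and the identity $\sum_{i,n}P_iQ_n=\sum_iP_i\sum_nQ_n$ are read off from results already at hand, principally Proposition~\ref{PropsExist}(4) and the proposition stating that a strongly summable family of strong operators remains strongly summable after composition on either side with a single strong operator.

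For the first stage I would argue directly from the definition of summability. Since $P_i$ is small with witness $\fP$ and $Q_n$ is small with witness $\fQ^n$, the composite $P_iQ_n$ is small with witness $\fP\fQ^n$, so $\supp(P_iQ_n(f))\subseteq\fP\fQ^n\supp(f)\subseteq\fP\fQ^*\supp(f)$ for every $f\in C[[\fM]]$, where $\fQ^*:=\bigcup_{n\geq1}\fQ^n$. Neumann's Lemma (Proposition~\ref{fGstar}) gives that $\fQ^*$ is reverse well-ordered, and two applications of Proposition~\ref{fGfN} then give that $\fP\fQ^*\supp(f)$ is reverse well-ordered; hence $\bigcup_{i,n}\supp(P_iQ_n(f))$ is reverse well-ordered. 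For the per-monomial finiteness, fix $\fm\in\fM$ and suppose $\fm\in\supp(P_iQ_n(f))$; then $\fm=\fp\fg\fm_0$ with $\fp\in\fP$, $\fg\in\fQ^n$, $\fm_0\in\supp(f)$. By two applications of Proposition~\ref{fGfN} there are only finitely many such triples $(\fp,\fg,\fm_0)$, and by the final clause of Proposition~\ref{fGstar} each $\fg$ lies in $\fQ^n$ for only finitely many $n$, so only finitely many $n$ occur; and for each such $n$ the element $Q_n(f)$ is fixed and $(P_i)_i$ is summable, so only finitely many $i$ put $\fm$ into $\supp(P_i(Q_n(f)))$. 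Hence only finitely many pairs $(i,n)$ do, and $(P_iQ_n)_{i,n}$ is summable.

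For the second stage, fix $n$: since $(P_i)_i$ is strongly summable and $Q_n$ is strong (small operators being strong), the composition proposition gives that $(P_iQ_n)_i$ is strongly summable with sum $\big(\sum_iP_i\big)Q_n$. Writing $P:=\sum_iP_i$, which is then strong, the same proposition applied to the strongly summable family $(Q_n)_n$ and the strong operator $P$ shows that $(PQ_n)_n$ is strongly summable with sum $P\sum_nQ_n$. Applying Proposition~\ref{PropsExist}(4) to the partition of the (now known summable) family $(P_iQ_n)_{i,n}$ according to the value of $n$, we obtain
\begin{align*}
\sum_{i,n}P_iQ_n\ &=\ \sum_n\sum_iP_iQ_n\ =\ \sum_n\Big(\big({\textstyle\sum_iP_i}\big)Q_n\Big)\\
&=\ \sum_nPQ_n\ =\ P\sum_nQ_n\ =\ \Big(\sum_iP_i\Big)\Big(\sum_nQ_n\Big).
\end{align*}
Since $\sum_iP_i$ and $\sum_nQ_n$ are strong, their composite is strong by the lemma on products of strong operators, so the summable family $(P_iQ_n)_{i,n}$ has strong sum and is therefore strongly summable. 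I expect the real work to be entirely in the summability step, where Neumann's Lemma is what controls the exponent $n$ and where the shared witness $\fP$ together with the summability of $(P_i)_i$ control the index $i$; the remainder is a mechanical rearrangement of a double sum using Proposition~\ref{PropsExist}(4) and the single-operator composition proposition.
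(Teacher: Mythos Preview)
Your proof is correct and follows essentially the same route as the paper: the summability argument is virtually identical (Neumann's Lemma to bound $n$, summability of $(P_i)_i$ applied to the fixed vector $Q_n(f)$ to bound $i$), and your second stage simply spells out what the paper compresses into ``existence follows from smallness, equality from Proposition~\ref{PropsExist}'' by invoking the single-operator composition proposition twice and Proposition~\ref{PropsExist}(4) for the rearrangement. If anything, your write-up of the strong-summability step is cleaner than the paper's, since you make explicit that the sum equals $\big(\sum_iP_i\big)\big(\sum_nQ_n\big)$, a composite of strong operators and hence strong.
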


\begin{proof}
First we note that for any $f$, the family $(P_iQ_n(f))$ is summable. Let $\mathcal{F}$ denote the support of $f$. Then
\[\bigcup_{i,n} \supp(P_iQ_n(f))\subseteq \bigcup_n \fQ^n\fP \cF \subseteq \fQ^*\fP\cF,\]
which is anti-well-ordered. 
We now let $\fm$ be a fixed element in $\fM$. 
We want to show that $\{(i,n): \fm\in \supp(P_iQ_n(f))\}$ is finite. 
Since $\sum_i P_i$ exists, we have that for a fixed $n$, $\{i: \fm\in \supp P_iQ_n(f)\}$ is finite. Now, by combining \ref{fGstar} and \ref{fGfN} we know that there are only finitely many tuples $(\fp,(n,\fq_1,\ldots,\fq_n),\fn)$ with $\fp\in \fP, n\in \mathbb{N}, \fq_i\in \fQ, \fn\in \cF$ such that $\fp\fq_1\cdots\fq_n\fn = \fm$. These two observations show that $(P_iQ_n)$ is summable. The existence then follows from smallness, and equality from Proposition \ref{PropsExist}.
\end{proof}

\section{Truncation of Hahn Series} 

\noindent
For $f=\sum_{\fm}f_{\fm}\fm\in \smallk[[\fM]]$
and $\fn\in \fM$, the \textbf{truncation  $f|_{\fn}$ of $f$ at $\fn$} is defined by
$$ f|_{\fn}\ :=\ \sum_{\fm\succ \fn}f_{\fm}\fm, \quad\text{an element of } \smallk[[\fM].$$
Thus for $f,g\in \smallk[[\fM]]$ we have $(f+g)|_{\fn} = f|_{_\fn} +g|_{\fn}$. 
A subset $S$ of $\smallk[[\fM]]$ is said to be \textbf{truncation closed} if for all $f\in S$ and $\fn\in \fM$ we have $f|_{\fn} \in S$. For 
$f,g\in \smallk[[\fM]]$ we let $f\trunceq g$ mean that $f$ is a truncation of 
$g$, and let $f\truncof g$ mean that $f$ is a proper truncation of $g$,
that is, $f\trunceq g$ and $f\ne g$. 

\medskip\noindent
When, as in \cite{D}, Hahn fields are given as $\smallk((t^\Gamma))$, 
we adapt our notation accordingly:
the \textbf{truncation of $f =\ser{f}\in \smallk((t^\Gamma))$ at $\gamma_0\in \Gamma$} is 
\[f|_{\gamma_0}\ :=\ \sum_{\gamma<\gamma_0}f_\gamma t^\gamma.\]
We now list some items from \cite{D} that we are going to use:
 
\begin{prop} \label{D} Let $A$ be a subset of $\smallk[[\fM]]$, $R$ a subring 
of $\smallk[[\fM]]$, and $E$ a (valued) subfield of $\smallk[[\fM]]$. Then:
\begin{itemize}
	\item[i)] The ring as well as the field generated in $\smallk[[\fM]]$ by any truncation closed
subset of $\smallk[[\fM]]$ is truncation closed: \cite[Theorem 1.1]{D};
    \item[ii)] If $R$ is truncation closed and all proper truncations of all $a\in A$ lie in $R[A]$, then the ring $R[A]$ is truncation closed;
    \item[iii)] If $E$ is truncation closed and $E\supseteq \smallk$, 
then the henselization of $E$ in 
$\smallk[[\fM]]$ is truncation closed: \cite[Theorem 1.2]{D};
    \item[iv)] If $E$ is truncation closed, henselian, and $\operatorname{char}(\smallk)=0$, then any algebraic field extension of $E$ in $\smallk[[\fM]]$ is 
truncation closed: \cite[Theorem 5.1]{D}. 
\end{itemize}
\end{prop}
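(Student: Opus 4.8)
Items i), iii), and iv) are quoted verbatim from \cite{D}, so for those there is nothing to prove beyond recording the references; the only part requiring an argument is ii), and the plan is to deduce it from i).

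Given $A\subseteq\smallk[[\fM]]$, let $\bar A$ denote the set of all truncations, proper or not, of elements of $A$; that is, $\bar A=\{b\in\smallk[[\fM]]:\ b\trunceq a\text{ for some }a\in A\}$. I would first check that $\bar A$ is truncation closed: if $b\trunceq a$ with $a\in A$ and $\fn\in\fM$, then $b|_{\fn}\trunceq b\trunceq a$ by transitivity of $\trunceq$, so $b|_{\fn}\in\bar A$. Since $R$ is truncation closed by hypothesis, it follows that the union $R\cup\bar A$ is truncation closed as well.

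Now I would apply i) to the truncation closed set $R\cup\bar A$: the subring of $\smallk[[\fM]]$ it generates, namely $R[\bar A]$, is truncation closed. It remains to identify $R[\bar A]$ with $R[A]$. Since $A\subseteq\bar A$ we have $R[A]\subseteq R[\bar A]$; conversely $A\subseteq R[A]$ trivially and every proper truncation of every $a\in A$ lies in $R[A]$ by hypothesis, so $\bar A\subseteq R[A]$ and hence $R[\bar A]\subseteq R[A]$. Therefore $R[A]=R[\bar A]$, which is truncation closed, proving ii).

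This reduction is essentially bookkeeping, and there is no real obstacle, since all the combinatorial content has been absorbed into i), i.e. into \cite[Theorem 1.1]{D}. (Were one to want a proof of ii) not relying on i), the work would be exactly the support computation for products of Hahn series with controlled truncations that underlies \cite[Theorem 1.1]{D}, carried out with coefficients and generators ranging over $R$ and $A$ respectively.)
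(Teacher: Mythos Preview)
Your proof is correct and follows essentially the same approach as the paper: the paper defines $B:=A\cup\{f:\ f\truncof a\text{ for some }a\in A\}$ (which coincides with your $\bar A$), observes that $R\cup B$ is truncation closed, applies (i) to conclude $R[B]$ is truncation closed, and notes $R[B]=R[A]$. Your write-up simply makes the transitivity step and the two inclusions $R[A]\subseteq R[\bar A]\subseteq R[A]$ more explicit.
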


\noindent
Item (ii) here is an improved version of \cite[Corollary 2.6]{D}.
To prove (ii), assume $R$ is truncation closed and all proper truncations of
all $a\in A$ lie in $R[A]$. Let 
$$B\ :=\ A \cup \{f\in \smallk[[\fM]]:\ f\truncof a \text{ for some }a\in A\}.$$
Then $B$ is truncation closed, and so is $R\cup B$, and thus the ring
$R[B]=R[A]$ is truncation closed, by (i) of the proposition above.

\medskip\noindent
The above deals with extension procedures of algebraic nature. 
As in \cite{D}, we also consider transcendental adjunctions of
of the following kind.
Let for each $n\ge 1$ a subset
$\cF_n$ of $\smallk[[X_1,\dots, X_n]]$ be given such that the subring
$\smallk[X_1,\dots, X_n, \cF_n]$ of $\smallk[[X_1,\dots, X_n]]$ is closed under
$\partial/\partial X_i$ for $i=1,\dots,n$, and let $\cF$ be the family
$(\cF_n)$. For example, if $\text{char}(\smallk)=0$, we could take
$$\cF_1=\{(1+X_1)^{-1},\ \exp X_1,\ \log(1+X_1)\},\qquad  \cF_n=\emptyset\  \text{ for }n>1$$
where $\exp X_1:=\sum_{i=0}^\infty X_1^i/i!\ $  and $\log(1+X_1):=\sum_{i=1}^{\infty}(-1)^{i+1}X_1^i/i$.
A subfield $E$ of $\smallk[[\fM]]$ is said to be $\cF$-closed if 
$f(\vec a)\in E$ for all
$f(X_1,\dots, X_n)\in \cF_n$ and $\vec a\in (E\cap\ \smallo )^{\times n}$, $n=1,2,\dots$. The $\cF$-closure of a subfield
$E$ of $\smallk[[\fM]]$ is the smallest $\cF$-closed subfield 
$\cF(E)$ of $\smallk[[\fM]]$ that contains $E$.

\begin{prop}\label{ta} If $\operatorname{char}(\smallk)=0$ and
$E\supseteq \smallk$ is a truncation closed subfield of $\smallk[[\fM]]$,
then its $\cF$-closure $\cF(E)$ is also truncation closed.
\end{prop}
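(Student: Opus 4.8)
The plan is to build $\cF(E)$ as the union of a chain $E = E_0 \subseteq E_1 \subseteq \cdots$, where $E_{\lambda+1}$ is the subfield of $\smallk[[\fM]]$ generated by $E_\lambda$ together with
\[
A(E_\lambda)\ :=\ \{\, f(\vec a)\ :\ n\ge 1,\ f\in \cF_n,\ \vec a\in (E_\lambda\cap \smallo)^{\times n}\,\},
\]
taking unions at limit stages; this chain stabilizes at $\cF(E)$. A directed union of truncation closed sets is truncation closed, so everything comes down to one successor step: given a truncation closed subfield $E'\supseteq \smallk$, show that the subfield generated by $E'\cup A(E')$ is truncation closed. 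By Proposition~\ref{D}(i) the field generated by a truncation closed set is truncation closed, and by Proposition~\ref{D}(ii) it is enough to show that every proper truncation of an element of $A(E')$ lies in the ring $E'[A(E')]$. So the whole matter reduces to the claim
\[
(\ast)\qquad f\in \cF_n,\quad \vec a\in (E'\cap \smallo)^{\times n},\quad \fn\in \fM\ \Longrightarrow\ f(\vec a)|_{\fn}\in E'[A(E')].
\]

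The engine for $(\ast)$ is the formal Taylor identity $f(\vec b+\vec c)=\sum_{\vec j}\frac{1}{\vec j!}\,(\partial^{\vec j}f)(\vec b)\,\vec c^{\,\vec j}$, which for tuples $\vec b,\vec c$ from $\smallo$ is a genuine summable identity in $\smallk[[\fM]]$ (cf. Proposition~\ref{fGstar} and the small/strong operator material). The hypothesis on $\cF$ enters through the closure of $\smallk[X_1,\dots,X_n,\cF_n]$ under each $\partial/\partial X_i$: it forces every coefficient $(\partial^{\vec j}f)(\vec b)$ to be a polynomial over $\smallk$ in the components of $\vec b$ and in finitely many values $g(\vec b)$ with $g\in \cF_n$, hence an element of $E'[A(E')]$ with support $\preceq 1$. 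For $\fn\succeq 1$ one has $f(\vec a)|_{\fn}=0$ since $\supp f(\vec a)\subseteq \{1\}\cup \fM^{\prec 1}$, so assume $\fn\prec 1$. Taking $\vec b=\vec a|_{\fn}$ (componentwise) and $\vec c=\vec a-\vec a|_{\fn}$ and bounding valuations yields $f(\vec a)|_{\fn}=f(\vec a|_{\fn})|_{\fn}$; modulo a routine reduction disposing of the components on which truncation at $\fn$ is trivial, we may thus assume $\supp a_i\subseteq \fM^{\succ \fn}$ for all $i$. If moreover $2v(a_i)\ge v(\fn)$ for all $i$, then every monomial of $\vec a$-degree $\ge 2$ is supported $\preceq \fn$, so $f(\vec a)|_{\fn}=\big(f(0)+\sum_i (\partial_i f)(0)\,a_i\big)|_{\fn}\in E'$. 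Otherwise, for each $i$ with $2v(a_i)<v(\fn)$ set $\mathfrak t_i:=\fn\cdot\mathfrak l_i^{-1}$, where $\mathfrak l_i$ is the leading monomial of $a_i$ (so $\mathfrak t_i\prec\mathfrak l_i$ and $2v(\mathfrak t_i)\ge v(\fn)$), split $a_i=a_i'+a_i''$ with $a_i'=a_i|_{\mathfrak t_i}\ne 0$ and $a_i''=a_i-a_i'$, and set $a_i'=a_i$, $a_i''=0$ for the remaining $i$. Since $2v(\mathfrak t_i)\ge v(\fn)$, in the Taylor expansion of $f(\vec a'+\vec a'')$ every term of $\vec a''$-degree $\ge 2$ is supported $\preceq \fn$ and drops out, leaving
\[
f(\vec a)|_{\fn}\ =\ f(\vec a')|_{\fn}\ +\ \sum_i \big((\partial_i f)(\vec a')\,a_i''\big)\big|_{\fn}.
\]
Here $(\partial_i f)(\vec a')\in E'[A(E')]$, the $a_i''$ and all truncations of the $a_i'$ lie in $E'$, and $f(\vec a')\in A(E')$; applying Proposition~\ref{D}(i) to the truncation closed set made up of $\smallk$, the $a_i'$, the $a_i''$, the values $h(\vec a')$ with $h\in\cF_n$, and all their truncations, one sees that each summand on the right lies in the ring generated by $E'$, $A(E')$ and finitely many truncations $h(\vec a')|_{\fm}$ of elements of $A(E')$. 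Thus $(\ast)$ for $(f,\vec a,\fn)$ is reduced to instances of $(\ast)$ for triples $(h,\vec a',\fm)$ with $h\in\cF_n$ and $\fm\in \fM$.

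The main obstacle is to turn this reduction into a well-founded induction. When at least one $a_i''\ne 0$, the anti-well-ordered set $\bigcup_i\supp a_i'$ is a proper subset of $\bigcup_i\supp a_i$, so its order type strictly drops; since all the new triples carry the same tuple $\vec a'$, this ordinal decreases uniformly in $h$ and $\fm$, and one inducts on it. The genuinely delicate case is the one where the split is trivial, i.e. $a_i''=0$ for every $i$ --- each $a_i$ already living strictly above its $\mathfrak t_i$ --- so that the argument tuple does not shrink. I would handle this by peeling off instead the degree-$<m$ part of $f$, which evaluates into the field $E'$, and pushing the truncation lower: over an archimedean value group only finitely many homogeneous pieces of $f(\vec a)$ can still reach $\fn$, and for a general ordered abelian group $\Gamma$ one must run a secondary induction keyed to $\fn$ and the supports of the $a_i$. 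Packaging the order type of $\bigcup_i\supp a_i$ together with that secondary parameter into one well-founded rank --- so that even the trivial-split step strictly decreases it while staying inside $E'[A(E')]$ --- is the crux; everything else is valuation bookkeeping plus Proposition~\ref{D}, the small/strong operator results, and the induction on $\lambda$ already in place.
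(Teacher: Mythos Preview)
The paper states Proposition~\ref{ta} without proof, as a result from~\cite{D}; the argument it would use is the one given for the analogous Lemma~\ref{tL16}, so I compare against that. Your reduction to the claim $(\ast)$ via Proposition~\ref{D} and a Taylor expansion is correct and is exactly the engine the paper uses. The gap is in the induction.

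The paper's argument picks a \emph{single} index $k$ with $a_k\ne 0$, any $\fm\in\supp a_k$, splits only that one coordinate as $a_k=h+g$ with $h=a_k|_{\fm}$, and Taylor-expands in $X_k$ alone. Since $\fm\in\supp a_k$, the truncation $h$ is strictly shorter than $a_k$, so the lexicographic tuple $(o(a_1),\dots,o(a_n))$ strictly drops; and $g\preceq\fm\prec 1$ forces only finitely many Taylor terms to contribute to any given proper truncation. There is no residual case. Your scheme instead splits several coordinates simultaneously at thresholds $\mathfrak{t}_i=\fn\cdot\mathfrak{l}_i^{-1}$ and inducts on the order type of $\bigcup_i\supp a_i$. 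Two things fail. First, the assertion that $\bigcup_i\supp a_i'$ has strictly smaller order type is unjustified: a monomial dropped from $\supp a_i$ may survive in some $\supp a_j'$ for $j\ne i$, so the union need not even be a proper subset, and in any case a proper subset of a well-ordered set need not have smaller order type. (Lex or multiset order on $(o(a_1),\dots,o(a_n))$ would repair this part, since each $o(a_i')\le o(a_i)$ with at least one strict when some $a_i''\ne 0$.) Second, you explicitly leave the ``trivial split'' case $a_i''=0$ for all $i$ as an unresolved crux requiring a secondary induction you do not carry out. The single-variable split avoids both problems at once: because $\fm$ is chosen \emph{in} $\supp a_k$, one always has $g\ne 0$ and $o(h)<o(a_k)$, so the trivial case never arises. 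Replace your multi-coordinate split and union-order-type parameter by the single-coordinate split and lex order on order types, and the proof closes.
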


\subsection*{Additional facts on truncation} 
Besides $\Gamma$ we now consider a second ordered abelian 
group $\Delta$. Below we identify $\Gamma$ and $\Delta$ 
in the usual way with subgroups of
the lexicographically ordered sum $\Gamma\oplus \Delta$, so that 
$\Gamma+\Delta=\Gamma\oplus \Delta$, with $\gamma>\Delta$ for all $\gamma\in \Gamma^{>}$. This makes $\Delta$ a convex subgroup of $\Gamma+\Delta$. 
Let $\smallk_0$ be a field and $\smallk = \smallk_0((t^\Delta))$.
Then we have a field isomorphism 
$$\smallk((t^\Gamma))\longrightarrow \smallk_0((t^{\Gamma+ \Delta}))$$
that is the identity on $\smallk$, namely 
$$f=\sum_{\gamma}f_{\gamma}t^\gamma\mapsto 
\sum_{\gamma,\delta} f_{\gamma,\delta}t^{\gamma+\delta}$$
where $f_{\gamma}=\sum_\delta f_{\gamma,\delta}t^{\delta}$ for all $\gamma$. 
Below we identify $\smallk((t^\Gamma))$ with  $\smallk_0((t^{\Gamma+ \Delta}))$
via the above isomorphism. 
 For a set $S\subseteq \smallk((t^\Gamma))$
this leads to two notions of truncation: 
we say that $S$ is
$\smallk$-truncation closed if it is truncation closed with 
$\smallk$ viewed as the coefficient 
field and $\Gamma$ as the group of exponents (that is, viewing $S$ as a subset of the Hahn field $\smallk((t^\Gamma))$ over $\smallk$), and we say that
$S$ is $\smallk_0$-truncation closed if it is truncation closed  with 
$\smallk_0$ viewed as the coefficient 
field and $\Gamma+\Delta$ as the group of exponents (that is, viewing
$S$ as a subset of the Hahn field $\smallk_0((t^{\Gamma+\Delta}))$
over $\smallk_0$). 

\begin{lemma}\label{trtr1} If $S\subseteq\smallk((t^\Gamma))$ is $\smallk_0$-truncation closed, then $S$ is $\smallk$-truncation closed.
\end{lemma}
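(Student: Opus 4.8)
The plan is to reduce a $\smallk$-truncation to a $\smallk_0$-truncation. Concretely, for $f\in S$ and $\gamma_0\in\Gamma$ I will show that the $\smallk$-truncation $f|_{\gamma_0}=\sum_{\gamma<\gamma_0}f_\gamma t^\gamma$ coincides with a $\smallk_0$-truncation of $f$ at a suitably chosen $\eta\in\Gamma+\Delta$; then $f|_{\gamma_0}\in S$ is immediate from $\smallk_0$-truncation closedness. The underlying reason this works is geometric: passing from $f$ to $f|_{\gamma_0}$ deletes a union of whole cosets $\gamma+\Delta$ with $\gamma\ge\gamma_0$, and since $\Gamma+\Delta=\Gamma\oplus\Delta$ is ordered lexicographically with $\Gamma$ dominant — so the cosets $\gamma+\Delta$ are linearly ordered by their $\Gamma$-coordinate, by the standing hypothesis $\gamma>\Delta$ for $\gamma\in\Gamma^{>}$ — deleting such a ``top segment of cosets'' is a special case of deleting a final segment of $\Gamma+\Delta$, i.e. of $\smallk_0$-truncating.

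In detail: fix $f=\sum_\gamma f_\gamma t^\gamma\in S$ (with $f_\gamma\in\smallk$) and $\gamma_0\in\Gamma$, and set $h:=f-f|_{\gamma_0}=\sum_{\gamma\ge\gamma_0}f_\gamma t^\gamma$. If $h=0$, then $f|_{\gamma_0}=f\in S$ and there is nothing to prove, so assume $h\ne 0$ and let $\eta\in\Gamma+\Delta$ be the least element of the support of $h$ regarded as an element of $\smallk_0((t^{\Gamma+\Delta}))$. I claim that the $\smallk_0$-truncation of $f$ at $\eta$ equals $f|_{\gamma_0}$.

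To verify the claim I would compare supports. Writing $f=\sum_{\gamma,\delta}f_{\gamma,\delta}t^{\gamma+\delta}$ with $f_\gamma=\sum_\delta f_{\gamma,\delta}t^\delta$, it suffices to check that for every $(\gamma,\delta)$ with $f_{\gamma,\delta}\ne 0$ one has $\gamma+\delta<\eta$ if and only if $\gamma<\gamma_0$. If $\gamma\ge\gamma_0$, then $(\gamma,\delta)$ lies in the support of $h$, so $\gamma+\delta\ge\eta$ by minimality of $\eta$. If $\gamma<\gamma_0$, write $\eta=\gamma_1+\delta_1$ with $\gamma_1\in\Gamma$, $\delta_1\in\Delta$; since $\eta$ is in the support of $h$, which meets only the cosets $\gamma'+\Delta$ with $\gamma'\ge\gamma_0$, we have $\gamma_1\ge\gamma_0$, hence $\gamma<\gamma_0\le\gamma_1$, and because whole cosets are ordered by their $\Gamma$-coordinate this forces $\gamma+\delta<\eta$. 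This yields the claim, so $f|_{\gamma_0}$ is a $\smallk_0$-truncation of $f$ and therefore lies in $S$.

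I do not foresee a serious obstacle; the only delicate point is the bookkeeping with the coset structure, namely the observation that in $\Gamma\oplus\Delta$ the entire coset $\gamma+\Delta$ sits strictly below the entire coset $\gamma'+\Delta$ whenever $\gamma<\gamma'$, which is precisely the convexity of $\Delta$ together with the dominance of $\Gamma$. Everything else is a routine matching of the two descriptions of the deleted tail $h$.
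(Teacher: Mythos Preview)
Your argument is correct and follows essentially the same idea as the paper's proof: both show that the $\smallk$-truncation of $f$ at $\gamma_0$ coincides with a $\smallk_0$-truncation of $f$ at a suitably chosen point of $\Gamma+\Delta$, using that $\Delta$ is convex so cosets $\gamma+\Delta$ are linearly ordered by their $\Gamma$-coordinate. The only cosmetic difference is that the paper splits into two cases (according to whether $(\gamma_0+\Delta^{<})\cap\supp_{\smallk_0}f$ is empty, truncating at $\gamma_0$ itself if so and at the least element of that set otherwise), whereas you handle both cases uniformly by truncating at $\eta=\min\supp_{\smallk_0}(h)$; your choice of $\eta$ may differ from the paper's, but either works.
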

\begin{proof} Let $f\in \smallk((t^\Gamma))$ and $\gamma\in \Gamma$. If 
$ (\gamma+\Delta^{<})\cap \supp_{\smallk_0}f=\emptyset$, then the
$\smallk$-truncation of $f$ at $\gamma$ equals the $\smallk_0$-truncation 
of $f$ at 
$\gamma\in \Gamma +\Delta$.  

If $(\gamma+\Delta^{<})\cap \supp_{\smallk_0}f\ne
\emptyset$, then the $\smallk$-truncation of $f$ at $\gamma$ equals the $\smallk_0$-truncation of $f$ at the least element of $(\gamma+\Delta^{<})\cap \supp_{\smallk_0}f$.
\end{proof}

\begin{lemma}\label{trtr2} Let $\smallk_1$ be a truncation closed subfield of the Hahn field 
$\smallk_0((t^\Delta))= \smallk$ over $\smallk_0$, and let $V$ be a 
$\smallk_1$-linear subspace of $\smallk_1((t^\Gamma))\subseteq \smallk((t^\Gamma))$
such that $V\supseteq \smallk_1$ and $V$ is $\smallk$-truncation closed.
Then $V$ is $\smallk_0$-truncation closed.
\end{lemma}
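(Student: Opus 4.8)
The plan is to reduce the statement to a single explicit truncation identity, and then to isolate the one step that is not mere bookkeeping: extracting the leading $\smallk_1$-coefficient of an element of $V$. First I would fix $f\in V$ and a point $\alpha\in\Gamma+\Delta$, and write $\alpha=\gamma_0+\delta_0$ with $\gamma_0\in\Gamma$ and $\delta_0\in\Delta$ (uniquely, since $\Gamma+\Delta=\Gamma\oplus\Delta$). Writing $f=\sum_\gamma f_\gamma t^\gamma$ with $f_\gamma\in\smallk_1\subseteq\smallk=\smallk_0((t^\Delta))$, convexity of $\Delta$ in $\Gamma+\Delta$ shows that $\gamma+\delta<\gamma_0+\delta_0$ holds exactly when $\gamma<\gamma_0$, or $\gamma=\gamma_0$ and $\delta<\delta_0$; hence the $\smallk_0$-truncation of $f$ at $\alpha$ equals
\[ f|_{\gamma_0}\ +\ (f_{\gamma_0}|_{\delta_0})\,t^{\gamma_0}, \]
where $f|_{\gamma_0}$ denotes the $\smallk$-truncation of $f$ at $\gamma_0$ and $f_{\gamma_0}|_{\delta_0}$ the truncation of $f_{\gamma_0}$ in $\smallk_0((t^\Delta))$ at $\delta_0$. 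The first summand lies in $V$ because $V$ is $\smallk$-truncation closed, and $f_{\gamma_0}|_{\delta_0}\in\smallk_1$ because $\smallk_1$ is truncation closed; so everything reduces to showing $(f_{\gamma_0}|_{\delta_0})\,t^{\gamma_0}\in V$.

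The key claim is that $g_{\gamma_0}t^{\gamma_0}\in V$ for every $g\in V$ and every $\gamma_0\in\Gamma$. To prove it I would set $h:=g-g|_{\gamma_0}\in V$, so that $\supp h\subseteq\Gamma^{\geq\gamma_0}$ and $h$ has the same coefficient as $g$ at $\gamma_0$. If $g_{\gamma_0}=0$ the claim is trivial; otherwise $\gamma_0=\min\supp h$, and since $\supp h$ is well ordered the set $\supp h\cap\Gamma^{>\gamma_0}$ is either empty, in which case $h=g_{\gamma_0}t^{\gamma_0}\in V$, or it has a least element $\gamma_1$, in which case the $\smallk$-truncation $h|_{\gamma_1}$ equals $g_{\gamma_0}t^{\gamma_0}$ and lies in $V$ by $\smallk$-truncation closedness. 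Applying this with $g:=f$: if $f_{\gamma_0}=0$ then $(f_{\gamma_0}|_{\delta_0})t^{\gamma_0}=0\in V$; and if $f_{\gamma_0}\neq 0$ then $f_{\gamma_0}t^{\gamma_0}\in V$, and since $\smallk_1$ is a field we may write $f_{\gamma_0}|_{\delta_0}=(f_{\gamma_0}|_{\delta_0}\,f_{\gamma_0}^{-1})\cdot f_{\gamma_0}$ with scalar $f_{\gamma_0}|_{\delta_0}\,f_{\gamma_0}^{-1}\in\smallk_1$, so that $(f_{\gamma_0}|_{\delta_0})t^{\gamma_0}\in V$ by $\smallk_1$-linearity of $V$. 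This completes the argument.

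The truncation formula and the manipulation with the field inverse are routine; the one place that requires an idea is the key claim, whose point is that $\smallk$-truncation closedness, together with well-orderedness of supports in $\Gamma$, already isolates the single leading monomial $g_{\gamma_0}t^{\gamma_0}$ — whereas the analogous statement over $\smallk_0$ (isolating $g_{\gamma_0,\delta}t^{\gamma_0+\delta}$ for the relevant $\delta$) is essentially the conclusion we are after and must not be presupposed. I expect no further obstacle; the only care needed is that the reduction in the first step accounts for every point of $\Gamma+\Delta$, and that the intermediate elements $g|_{\gamma_0}$, $h$, and the scalar multiples all remain inside the $\smallk_1$-subspace $V$.
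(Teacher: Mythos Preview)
Your proof is correct and follows essentially the same route as the paper: write the $\smallk_0$-truncation as the $\smallk$-truncation plus a correction term $(f_{\gamma_0}|_{\delta_0})t^{\gamma_0}$, then use $\smallk$-truncation closedness and well-orderedness of the $\Gamma$-support to isolate $f_{\gamma_0}t^{\gamma_0}\in V$, and finally invoke $\smallk_1$-linearity together with truncation closedness of $\smallk_1$. The only cosmetic difference is that the paper first divides by $f_{\gamma_0}$ to put $t^{\gamma_0}$ itself into $V$ and then multiplies by $f_{\gamma_0}|_{\delta_0}$, whereas you multiply $f_{\gamma_0}t^{\gamma_0}$ directly by the ratio $(f_{\gamma_0}|_{\delta_0})f_{\gamma_0}^{-1}$; the content is identical.
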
 
\begin{proof} If $\Gamma=\{0\}$, then $\smallk_1((t^\Gamma))=\smallk_1$, so
$V=\smallk_1$  is $\smallk_0$-truncation closed. In the rest of the proof
we assume $\Gamma\ne\{0\}$. Let $\beta=\gamma+\delta\in \Gamma+\Delta$ with
$\gamma\in \Gamma$ and $\delta\in \Delta$, and $f\in V$.
Let $g$ be the truncation of $f$ at $\beta$ in the Hahn field
$\smallk_0((t^{\Gamma+\Delta}))$ over $\smallk_0$ and $h$ the truncation of $f$ at $\gamma$ in the Hahn field $\smallk((t^\Gamma))$. Then $h\in V$
and $g=h+s$ with $s=\phi t^\gamma$ and $\phi\in \smallk_0((t^\Delta))$.
If $s=0$, then $g\in V$ trivially, so assume $s\ne 0$.
Then $f$ has a $\smallk$-truncation $h+\theta t^\gamma\in V$ with 
$0\ne\theta\in \smallk_1$ and so $\theta t^\gamma\in V$, $t^\gamma=\theta^{-1}(\theta t^\gamma)\in V$. Moreover,
$\phi$ is a $\smallk_0$-truncation of $\theta$, so $\phi\in \smallk_1$, hence
$s=\phi t^\gamma\in V$, and thus $g\in V$.
\end{proof}

\section{Derivations on $\smallk((t^\Gamma))$}

\noindent
In this section $\smallk$ is a {\em differential\/} field, with derivation $\partial$ (possibly trivial). We also let $\alpha,\beta, \gamma$ range over $\Gamma$, and fix an additive map $c:\Gamma \rightarrow \smallk$.
This allows us to extend $\partial$ to a derivation on the field $\smallk((t^\Gamma))$, also denoted by $\partial$, by declaring for $f=\sum_{\gamma}f_\gamma t^\gamma\in \smallk((t^\Gamma))$ that
\[\partial \left(f\right)\ :=\  \sum_{\gamma} \Big(\partial (f_\gamma) +f_\gamma c(\gamma)\Big)t^\gamma.\]
Thus $(t^\gamma)'=c(\gamma)t^\gamma$. 
For $f\in \smallk((t^\Gamma))$ we have $\supp f'\subseteq \supp f$, so $f'\preceq f$. If
$c(\Gamma)=\{0\}$, then the constant field of $\smallk((t^\Gamma))$ is clearly
$C_{\smallk}((t^\Gamma))$. It is easy to check that the constant field of
$\smallk((t^\Gamma))$ is the same as the constant field of $\smallk$ if and only if $c$ is injective and $c(\Gamma) \cap \smallk^\dagger = \{0\}$. 

Sometimes it is more natural to consider a Hahn field $\smallk[[\fM]]$, and then
a map $c: \fM\to \smallk$ is said to be addditive if $c(\fm\fn)=c(\fm) + c(\fn)$
for all $\fm, \fn\in \fM$. Again, such $c$ allows us to extend $\partial$ to a derivation $\partial$ on the field $\smallk[[\fM]]$ by 
\[\partial \left(f\right)\ :=\  \sum_{\fm} \Big(\partial (f_\fm) +f_\fm c(\fm)\Big)\fm.\]

 \medskip\noindent
{\bf Examples.} For $\fM=x^{\mathbb{Z}}$ with $x\succ 1$ we have the usual derivation 
$\frac{d}{dx}$ with 
respect to $x$ on the field of Laurent series 
$\smallk[[x^{\mathbb{Z}}]]=\smallk((t^{\mathbb{Z}}))$ (with $t=x^{-1}$), but it is not of the form considered above.
The derivation $x\frac{d}{dx}$, however, does have the form above, with the
trivial derivation on $\smallk$ and $c(x^k)=k\cdot 1\in \smallk$ for 
$k\in \mathbb{Z}$. Likewise for $\fM=x^{\mathbb{Q}}$ ($x\succ 1$), 
and $\operatorname{char}(\smallk)=0$: then  $x\frac{d}{dx}$ has the above 
form, with the trivial derivation on $\smallk$ and 
$c(x^q)=q\cdot 1\in \smallk$ for 
$q\in \mathbb{Q}$.

\bigskip\noindent
We now return to the setting of  $\smallk((t^\Gamma))$, and observe that
$\partial(f|_\gamma) = \partial(f)|_\gamma$ for $f\in \smallk((t^\Gamma))$. Thus by Proposition \ref{D}:

\begin{corollary}\label{lem1}
If $R$ be a truncation closed subring of $\smallk((t^\Gamma))$ and $f\in R$ is such that $\partial(g)\in F$ for every proper truncation $g$ of $f$, 
then all proper truncations of $\partial(f)$ lie in $R$ and thus 
$R[\partial(f)]$ is truncation closed.
\end{corollary}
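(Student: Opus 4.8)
The plan is to reduce the statement to the identity $\partial(f|_\gamma) = \partial(f)|_\gamma$ recorded just above the corollary, together with part (ii) of Proposition~\ref{D}. Since the corollary's second assertion ($R[\partial(f)]$ truncation closed) follows from the first (all proper truncations of $\partial(f)$ lie in $R$) by a direct appeal to Proposition~\ref{D}(ii) with $A=\{\partial(f)\}$, the only thing to prove is the first assertion.

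For that, I would unwind the definition of $\truncof$: a proper truncation $h$ of $\partial(f)$ has the form $h=\partial(f)|_\gamma$ for some $\gamma\in\Gamma$ with $\partial(f)|_\gamma\ne\partial(f)$. By the identity this equals $\partial(f|_\gamma)$. The key observation is that $f|_\gamma$ is then a \emph{proper} truncation of $f$: if instead $f|_\gamma=f$, we would get $h=\partial(f)$, contradicting properness of $h$. (Alternatively: $\partial(f)|_\gamma\ne\partial(f)$ forces some $\alpha\ge\gamma$ with $(\partial f)_\alpha\ne 0$, and since $\supp\partial(f)\subseteq\supp f$ this puts $\alpha\in\supp f$, so $f|_\gamma\ne f$.) Now the hypothesis on $f$ applies to $g:=f|_\gamma\truncof f$ and gives $\partial(f|_\gamma)\in R$, i.e.\ $h\in R$. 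This establishes that every proper truncation of $\partial(f)$ lies in $R$, and then Proposition~\ref{D}(ii) (using that $R$ is truncation closed) shows $R[\partial(f)]$ is truncation closed.

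I do not expect any real obstacle: the one point that carries content is the remark that $\partial$ commutes with truncation at each $\gamma$, so it cannot turn the improper truncation $f$ of $f$ into a proper truncation of $\partial(f)$ — hence every proper truncation of $\partial(f)$ is $\partial(g)$ for a genuine proper truncation $g=f|_\gamma$ of $f$. Everything else is bookkeeping and citation of results already in the excerpt.
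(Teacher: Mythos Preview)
Your proposal is correct and is precisely the argument the paper has in mind: the corollary is stated immediately after the observation $\partial(f|_\gamma)=\partial(f)|_\gamma$ with the words ``Thus by Proposition~\ref{D}'', and you have simply spelled out that deduction, including the check that a proper truncation of $\partial(f)$ comes from a proper truncation of $f$.
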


\begin{lemma}\label{DifFieldGenTrunc}
Let $R$ be a truncation closed subring of $\smallk((t^\Gamma))$. 
Then the differential subring of $\smallk((t^\Gamma))$ generated by $R$ is 
truncation closed. 
\end{lemma}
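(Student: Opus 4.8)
The plan is to build the differential subring of $\smallk((t^\Gamma))$ generated by $R$ as an increasing union of truncation closed subrings, adding derivatives one transfinite step at a time. Write $R_0 := R$, and recursively let $R_{n+1}$ be the subring of $\smallk((t^\Gamma))$ generated by $R_n$ together with $\{\partial(f) : f\in R_n\}$; then $R_\infty := \bigcup_n R_n$ is the differential subring generated by $R$, and since a union of a chain of truncation closed sets is truncation closed, it suffices to show each $R_n$ is truncation closed. This reduces everything to the inductive step: given that $R_n$ is truncation closed, show $R_{n+1}$ is truncation closed.

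For the inductive step I would apply Proposition \ref{D}(ii) with the truncation closed ring $R_n$ in the role of $R$ and the set $A := \{\partial(f): f\in R_n\}$, so that $R_{n+1} = R_n[A]$. The hypothesis to verify is that every proper truncation of every $\partial(f)$ with $f\in R_n$ already lies in $R_n[A] = R_{n+1}$. Here is where the key identity $\partial(f|_\gamma) = \partial(f)|_\gamma$ does the work: any proper truncation of $\partial(f)$ has the form $\partial(f)|_\gamma = \partial(f|_\gamma)$ for a suitable $\gamma\in\Gamma$, and since $R_n$ is truncation closed, $f|_\gamma\in R_n$, hence $\partial(f|_\gamma)\in A\subseteq R_{n+1}$. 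Actually one must be slightly careful that proper truncations of $\partial(f)$ are exactly the truncations $\partial(f)|_\gamma$ that differ from $\partial(f)$ — but truncations in a Hahn field are precisely the $f|_\gamma$ for $\gamma\in\Gamma$ together with $f$ itself, so every proper truncation of $\partial(f)$ is of the form $\partial(f)|_\gamma = \partial(f|_\gamma)$, and we are done. This is really just an application of Corollary \ref{lem1}, iterated over all $f\in R_n$ simultaneously, feeding its output ($R_n[\partial(f)]$ truncation closed) into Proposition \ref{D}(ii) to handle the whole set $A$ at once.

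I do not expect any serious obstacle: the only mildly delicate point is bookkeeping the transfinite-vs-finite structure of the construction — one should check that $\bigcup_n R_n$ really is closed under $\partial$ (if $g\in R_n$ then $\partial(g)\in R_{n+1}$) and really is the smallest differential subring containing $R$, both of which are immediate from the construction. The substantive content is entirely contained in the commutation $\partial(f|_\gamma)=\partial(f)|_\gamma$ (already established in the excerpt before Corollary \ref{lem1}) plus the improved ring-generation result Proposition \ref{D}(ii); the proof is essentially an assembly of these two ingredients via an induction on $n$.

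\begin{proof}
Set $R_0 := R$ and, recursively, let $R_{n+1}$ be the subring of $\smallk((t^\Gamma))$ generated by $R_n \cup \partial(R_n)$, where $\partial(R_n) := \{\partial(f): f\in R_n\}$. Then $R_0\subseteq R_1\subseteq R_2\subseteq\cdots$, and $R_\infty := \bigcup_n R_n$ is a subring of $\smallk((t^\Gamma))$ closed under $\partial$: if $g\in R_\infty$, then $g\in R_n$ for some $n$, so $\partial(g)\in R_{n+1}\subseteq R_\infty$. Moreover any differential subring of $\smallk((t^\Gamma))$ containing $R$ contains each $R_n$ by induction, hence contains $R_\infty$; so $R_\infty$ is the differential subring generated by $R$.

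We show by induction on $n$ that each $R_n$ is truncation closed. For $n=0$ this is the hypothesis. Suppose $R_n$ is truncation closed. Let $A := \partial(R_n)$, so that $R_{n+1} = R_n[A]$. We verify the hypothesis of Proposition \ref{D}(ii): every proper truncation of every element of $A$ lies in $R_n[A]$. Let $a = \partial(f)$ with $f\in R_n$, and let $b\truncof a$. Since truncations in the Hahn field $\smallk((t^\Gamma))$ are exactly the elements $a|_\gamma$ for $\gamma\in\Gamma$ together with $a$ itself, there is $\gamma\in\Gamma$ with $b = a|_\gamma = \partial(f)|_\gamma = \partial(f|_\gamma)$, using $\partial(f|_\gamma)=\partial(f)|_\gamma$. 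As $R_n$ is truncation closed, $f|_\gamma\in R_n$, so $b = \partial(f|_\gamma)\in A\subseteq R_n[A] = R_{n+1}$. By Proposition \ref{D}(ii), $R_{n+1} = R_n[A]$ is truncation closed, completing the induction.

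Finally, a union of a chain of truncation closed subsets of $\smallk((t^\Gamma))$ is truncation closed: if $f\in R_\infty$ and $\fn\in\fM$, then $f\in R_n$ for some $n$, so $f|_\fn\in R_n\subseteq R_\infty$. Hence $R_\infty$, the differential subring of $\smallk((t^\Gamma))$ generated by $R$, is truncation closed.
\end{proof}
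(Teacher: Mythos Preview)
Your proof is correct and follows essentially the same approach as the paper's: build the differential subring as the increasing union $\bigcup_n R_n$ with $R_{n+1}=R_n[\partial(R_n)]$, use the commutation $\partial(f)|_\gamma=\partial(f|_\gamma)$ to feed Proposition~\ref{D}(ii) at each step, and conclude by taking the union. The only difference is that you spell out a few routine verifications (that $R_\infty$ is the differential subring generated by $R$, and that unions of chains of truncation closed sets are truncation closed) which the paper leaves implicit.
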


\begin{proof}
Let $R_0 = R$ and $R_{n+1} = R_n[\partial(f): f\in R_n]$.
Assume inductively that $R_n$ is a truncation closed subring of 
$\smallk((t^\Gamma))$ . 
For $f\in R_n$ and $\gamma\in \Gamma$ we have $\partial(f)|_\gamma = \partial(f|_\gamma)$, so all truncations of $\partial(f)$ lie in $R_{n+1}$, and thus $R_{n+1}$ is truncation closed by Proposition~\ref{D}(ii). 
Since the differential ring generated by $R$ is $\bigcup_{n} R_n$, and a union of truncation closed subsets of $\smallk((t^\Gamma))$ is truncation closed, we conclude that the differential subring of $\smallk((t^\Gamma))$ generated by $R$ is truncation closed. 
\end{proof}

\noindent
Note that Lemma~\ref{DifFieldGenTrunc} goes through with ``subfield'' instead of
``subring''.


\subsection*{Adjoining solutions to $y-ay'=f$}
We wish to preserve truncation closedness under
adjoining solutions to differential equations $y-ay'=f$, where 
$a,f \in \smallk((t^\Gamma))$, $a\prec 1$.
This differential equation is expressed more suggestively as
$(I-a\partial)(y)=f$, where $I$ is the identity operator on  
$\smallk((t^\Gamma))$ and $a\partial$ is considered as a (strongly additive)
operator on $\smallk((t^\Gamma))$ in the usual way. Note that $a\partial$ 
is small as defined earlier, hence
$I-a\partial$ is bijective, with inverse 
\[(I-a\partial)\inv\ =\ \sum_n (a\partial)^n\]
Thus the above differential equation has a unique solution 
$y=(I-a\partial)\inv(f)$ in $\smallk((t^\Gamma))$. This is why we now turn our attention to the operator $(I-a\partial)\inv$.

\medskip\noindent
For $n\geq 1$, $0\leq m\leq n$ we define $G^n_m(X)\in \mathbb{Z}\{X\}$ recursively as follows:
\begin{itemize}
\item $G^n_0 = 0$,
\item $G^n_n = X^n$,
\item $G^{n+1}_m = X(\partial(G^n_m) +G^{n}_{m-1})$ for $1\leq m\leq n$. 
\end{itemize}
This recursion easily gives
\[(a\partial)^n\ =\ \sum_{m=1}^n G^n_m(a)\partial^m \qquad(n\ge 1),\]
hence
\begin{equation}
(I-a\partial)\inv = I + \sum^{\infty}_{n=1} \sum_{m=1}^n G^n_m(a)\partial^m.
\end{equation}

\noindent Since $G^n_m(X)$ is homogeneous of degree $n$, we have
$\supp G^n_m(a)\in \smallk t^{n\alpha}$ for $a\in \smallk t^\alpha$.

\begin{lemma}\label{OpLemma1}
Suppose $R$ is a truncation closed differential subring of
$\smallk((t^\Gamma))$, $a\in R\cap\smallk t^\Gamma$, $a\prec 1$, $f\in R$, and  $(I-a\partial)\inv(g)\in R$ 
for all $g\truncof f$. Then all proper truncations of $(I-a\partial)\inv (f)$ lie in $R$.
\end{lemma}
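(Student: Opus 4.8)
The plan is to set $y := (I-a\partial)\inv(f)$ and reduce the computation of an arbitrary proper truncation $y|_\gamma$ to finitely many applications of operators that preserve $R$, together with one invocation of the hypothesis. If $a=0$ this is immediate ($y=f\in R$, and $R$ is truncation closed), so we may assume $a=ct^\alpha$ with $c\in\smallk^\times$ and $\alpha\in\Gamma$, $\alpha>0$ (the latter because $a\prec 1$). The starting point is the identity $y=f+ay'$, coming from $(I-a\partial)(y)=f$. Since $\partial$ commutes with truncation and $(ct^\alpha h)|_\eta=ct^\alpha\bigl(h|_{\eta-\alpha}\bigr)$ for all $h\in\smallk((t^\Gamma))$, $\eta\in\Gamma$ (a routine coefficient check), this gives the recursion $y|_\eta=f|_\eta+a\,\partial\bigl(y|_{\eta-\alpha}\bigr)$, and an easy induction on $j$ turns it into
\[
y|_\gamma\ =\ \sum_{i=0}^{j-1}(a\partial)^i\bigl(f|_{\gamma-i\alpha}\bigr)\ +\ (a\partial)^j\bigl(y|_{\gamma-j\alpha}\bigr)\qquad(j\geq 0).
\]

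Next I would note that each summand $(a\partial)^i\bigl(f|_{\gamma-i\alpha}\bigr)$ lies in $R$: indeed $f|_{\gamma-i\alpha}\in R$ since $R$ is truncation closed and $f\in R$, and $a\partial$ maps $R$ into $R$ since $R$ is a differential ring containing $a$. So the whole problem reduces to choosing $j$ with $y|_{\gamma-j\alpha}\in R$. One cannot just send $j\to\infty$ and argue termwise, because the resulting series $\sum_n\bigl((a\partial)^nf\bigr)|_\gamma$ is genuinely infinite when $\{n\alpha\}$ is bounded above in $\Gamma$, and $R$ need not be closed under infinite sums — so the hypothesis on truncations of $f$ must be used. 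The way I would bring it in is the following observation: for $g\trunceq f$ with $g\ne f$ and $w:=(I-a\partial)\inv(g)$, we have $w-y=-(I-a\partial)\inv(f-g)$, and since $a\partial$ is small with witness $\{t^\alpha\}$ one has $\supp\bigl((I-a\partial)\inv(u)\bigr)\subseteq\bigcup_n\bigl(n\alpha+\supp u\bigr)$ for all $u$; applying this to $u=f-g$, whose support lies in $\Gamma^{\geq v(f-g)}$, and using $\alpha>0$, gives $\supp(w-y)\subseteq\Gamma^{\geq v(f-g)}$. Thus $w$ and $y$ agree strictly below $v(f-g)$; in particular, whenever $f|_\eta$ is a proper truncation of $f$ we get $y|_\eta=w|_\eta$ with $w=(I-a\partial)\inv(f|_\eta)\in R$ by hypothesis, hence $y|_\eta\in R$ by truncation closedness of $R$.

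It then remains to exhibit such a $j$. Since $y|_\gamma$ is a proper truncation of $y$ there is $\beta\in\supp y$ with $\beta\geq\gamma$; and since $y=\sum_{n\geq 0}(a\partial)^n(f)$ with $\supp\bigl((a\partial)^nf\bigr)\subseteq n\alpha+\supp f$, I can write $\beta=n_0\alpha+\beta'$ with $n_0\in\N$ and $\beta'\in\supp f$. Then $\gamma-n_0\alpha\leq\beta'$, so the truncation of $f$ at $\gamma-n_0\alpha$ omits the nonzero coefficient $f_{\beta'}$ and is a proper truncation of $f$; by the previous paragraph $y|_{\gamma-n_0\alpha}\in R$, hence $(a\partial)^{n_0}\bigl(y|_{\gamma-n_0\alpha}\bigr)\in R$, and taking $j=n_0$ in the iterated recursion gives $y|_\gamma\in R$, as wanted. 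The step I expect to be the real obstacle is the middle one: realizing that the infinite ``tail'' $(a\partial)^{n_0}\bigl(y|_{\gamma-n_0\alpha}\bigr)$ must be recognized as $(I-a\partial)\inv$ applied — and then truncated — to a \emph{proper} truncation of $f$, and that choosing $n_0$ from a support element of $y$ sitting above $\gamma$ is exactly what forces that truncation of $f$ to be proper.
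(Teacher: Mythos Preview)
Your proof is correct. The recursion $y|_\eta=f|_\eta+a\,\partial(y|_{\eta-\alpha})$ follows cleanly from $(I-a\partial)y=f$, the commutation of $\partial$ with truncations, and the monomial identity $(ct^\alpha h)|_\eta=ct^\alpha(h|_{\eta-\alpha})$; the iterated form and the choice of $j=n_0$ are verified exactly as you say, and the key step---that $y|_{\eta}=\bigl((I-a\partial)\inv(f|_\eta)\bigr)|_\eta$ whenever $f|_\eta\truncof f$---is justified by your support estimate for $(I-a\partial)\inv(f-f|_\eta)$.

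The route, however, differs from the paper's. The paper expands $(I-a\partial)\inv$ as $I+\sum_{n\ge 1}\sum_{m=1}^nG^n_m(a)\partial^m$ using the differential polynomials $G^n_m$, splits $f=f_1+f_2$ with $f_1=f|_\beta$ a proper truncation, and observes that after truncating at $\gamma$ only the terms with $n<N$ survive on the $f_2$ side, leaving a finite sum in $R$ plus the truncation of $(I-a\partial)\inv(f_1)\in R$. Your argument avoids the $G^n_m$ machinery altogether by working directly with the operator $a\partial$ and the telescoping recursion; this is a bit more elementary and self-contained for this lemma. On the other hand, the paper's $G^n_m$ expansion and the ``split $f$, then cut the tail'' template are set up precisely so that they carry over to the next lemma, where $a$ is no longer a monomial and one must also split $a=a_1+a_2$; your recursion relies essentially on $a$ being a single term (so that multiplication by $a$ shifts truncation levels by a fixed $\alpha$) and would need to be reworked there.
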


\begin{proof} We have 
$(I-a\partial)\inv(f) = f+  \sum^{\infty}_{n=1} \sum_{m=1}^n G^n_m(a)\partial^m(f)$.
Also $a=a_{\alpha}t^\alpha$ with $\alpha>0$, hence $\supp  \left(G^n_m(a)\partial^m(f)\right)\subseteq n\alpha+\supp f$ for 
$1\le m\le n$. Consider a proper truncation $(I-a\partial)\inv(f)|_{\gamma}$
of $(I-a\partial)\inv(f)$; we have to show that this truncation lies in $R$.
The truncation being proper gives $N\in \mathbb{N}^{\ge 1}$ and $\beta\in \supp f$ with 
$\gamma\le N\alpha +\beta$. 
Let $f_1:= f|_\beta$ and $f_2:=f-f_1$. Then $f_1, f_2\in R$ and
$$
(I-a\partial)\inv(f)\ =\ (I-a\partial)\inv(f_1) + f_2 + \sum^{\infty}_{n=1} \sum_{m=1}^n G^n_m(a)\partial^m(f_2).$$
Using $\supp f_2\ge \beta$ and truncating at $\gamma$ gives 
\[(I-a\partial)\inv(f)|_{\gamma}\ =\ (I-a\partial)\inv(f_1)|_\gamma + f_2|_\gamma + \left.\left(\sum^{N-1}_{n=1} \sum_{m=1}^n G^n_m(a)\partial^m(f_2)\right)\right|_\gamma,\]
which lies in $R$, since $f_1\truncof f$ and thus $(I-a\partial)\inv(f_1)\in R$. 
\end{proof}

\noindent
The key inductive step is provided by the next lemma. 

\begin{lemma}\label{OperatorLemma} Let $R$ be a truncation closed differential subring of $\smallk((t^\Gamma))$. Let $a,f\in R$ be such that $a\prec 1$ and for all $b,g\in R$,
\begin{itemize}
\item $g\truncof f \Rightarrow (I-a\partial)\inv (g)\in R$,
\item $b\truncof a \Rightarrow (I-b\partial)\inv(g)\in R$.
\end{itemize}
Then all proper truncations of $(I-a\partial)^{-1}(f)$ lie in $R$.
\end{lemma}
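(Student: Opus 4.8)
The plan is to decompose the operator $(I-a\partial)\inv$ by simultaneously varying both $a$ and $f$, so that the hypotheses on proper truncations of $a$ and of $f$ can both be exploited. The starting point is Lemma~\ref{OpLemma1}: if we already knew $(I-a\partial)\inv(g)\in R$ for all $g\truncof f$ — which is exactly the first bulleted hypothesis — then all proper truncations of $(I-a\partial)\inv(f)$ lie in $R$. So in fact the first hypothesis alone looks close to sufficient; the role of the second hypothesis (on truncations $b\truncof a$) must be to handle the interaction in the doubly-indexed family $\bigl(G^n_m(a)\partial^m\bigr)$, where truncating a product $G^n_m(a)$ can introduce terms that are controlled not by $f$ but by lower-order data in $a$.

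First I would write, as in the displayed identity~(1),
\[
(I-a\partial)\inv(f)\ =\ f + \sum_{n=1}^\infty \sum_{m=1}^n G^n_m(a)\,\partial^m(f),
\]
fix a proper truncation at $\gamma$, and use that $a=a_\alpha t^\alpha$ with $\alpha>0$ so that $\supp\bigl(G^n_m(a)\partial^m(f)\bigr)\subseteq n\alpha+\supp f$; this bounds the relevant $n$ by some $N$ as in Lemma~\ref{OpLemma1}. Next I would split $f = f_1 + f_2$ with $f_1 = f|_\beta \truncof f$ for a suitable $\beta\in\supp f$, so that $(I-a\partial)\inv(f_1)\in R$ by the first hypothesis and its proper truncations lie in $R$ by Lemma~\ref{OpLemma1}; the remaining contribution from $f_2$ is a \emph{finite} sum $\sum_{n=1}^{N-1}\sum_{m=1}^n G^n_m(a)\partial^m(f_2)$, an element of the differential ring $R$. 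The point where the second hypothesis enters is in showing that the truncation of this finite sum at $\gamma$ lies in $R$: truncating $G^n_m(a)\partial^m(f_2)$ at $\gamma$ need not respect the factorization, so one rewrites things in terms of $(I-b\partial)\inv$ for $b\truncof a$. Concretely, I expect one writes $a = b + a_\alpha t^\alpha$ for the appropriate proper truncation $b$ (or iterates over finitely many truncations of $a$), uses the recursion $G^{n+1}_m = X(\partial(G^n_m)+G^n_{m-1})$ together with homogeneity to compare $(I-a\partial)\inv$ with $(I-b\partial)\inv$ modulo terms of strictly larger valuation, and invokes the second hypothesis to place the $(I-b\partial)\inv$-contributions in $R$, closing an induction on (the order type of) a suitable invariant attached to $a$.

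The main obstacle I anticipate is exactly this bookkeeping: making precise the comparison between $(I-a\partial)\inv$ and $(I-b\partial)\inv$ for $b\truncof a$ and checking that the ``error'' terms have valuation large enough to be killed by truncating at $\gamma$, uniformly over the finitely many relevant $(n,m)$. This requires a clean estimate of the form $\bigl((I-a\partial)\inv - (I-b\partial)\inv\bigr)(g)$ has support bounded below by $\alpha + \supp g$ (or similar), which should follow from the operator identity $(I-a\partial)\inv - (I-b\partial)\inv = (I-a\partial)\inv\,(a-b)\partial\,(I-b\partial)\inv$ together with $a-b\prec 1$ being a monomial times a coefficient — all of whose factors are small operators with explicit witnesses, so that Proposition~\ref{fGstar} and the lemmas on products of small operators apply. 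Everything else — summability, membership in $R$ of the finitely many differential-polynomial terms, and the reduction of a proper truncation to a truncation of a finite partial sum — is routine given the machinery already developed, and the transfinite induction terminates because each step strictly shrinks the support of the ``remaining'' part of $a$ above the truncation point.
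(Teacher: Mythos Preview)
Your plan has a genuine gap, and it stems from misreading the hypothesis of Lemma~\ref{OpLemma1}: that lemma requires $a\in R\cap \smallk t^\Gamma$, i.e.\ $a$ is a \emph{single monomial}. In the present lemma $a$ is arbitrary with $a\prec 1$, so you cannot write ``$a=a_\alpha t^\alpha$'' and you cannot conclude $\supp\bigl(G^n_m(a)\partial^m(f)\bigr)\subseteq n\alpha+\supp f$ for one fixed $\alpha$. You only get $\supp\bigl(G^n_m(a)\partial^m(f)\bigr)\subseteq n\cdot\supp(a)+\supp f$, and when $\Gamma$ is non-Archimedean the minimum $\alpha_0=\min\supp a$ may satisfy $N\alpha_0<\gamma-\beta$ for \emph{every} $N$, so your reduction to a finite partial sum in $n$ fails. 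The same confusion reappears when you write ``$a=b+a_\alpha t^\alpha$'' and ``$a-b\prec 1$ being a monomial times a coefficient'': the tail $a-b$ of a truncation is \emph{not} a monomial in general.

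Relatedly, you have located the role of the second hypothesis in the wrong place. You say it is needed because ``truncating $G^n_m(a)\partial^m(f_2)$ at $\gamma$ need not respect the factorization''; but $R$ is truncation closed, so once you have a finite sum lying in $R$ its truncation at $\gamma$ lies in $R$ automatically. The actual difficulty is getting a finite sum at all. The paper handles this by choosing $\alpha\in\supp a$ and $\beta\in\supp f$ with $\gamma\le N\alpha+\beta$, splitting $a=a_1+a_2$ with $a_1=a|_\alpha$, setting $P=a_1\partial$, $Q=a_2\partial$, and using the identity
\[
(I-(P+Q))^{-1}\ =\ \sum_{m\ge 0}(I-P)^{-1}\bigl(Q(I-P)^{-1}\bigr)^m.
\]
Since $\min\supp a_2\ge\alpha$, each factor $Q$ raises valuation by at least $\alpha$, so only $m<N$ survive truncation at $\gamma$; and since $a_1\truncof a$, the second hypothesis gives $(I-P)^{-1}(R)\subseteq R$, putting each surviving term in $R$. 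Your resolvent identity $(I-a\partial)^{-1}-(I-b\partial)^{-1}=(I-a\partial)^{-1}(a-b)\partial(I-b\partial)^{-1}$ is correct and, when iterated to all orders, \emph{is} this identity; but as stated it still has $(I-a\partial)^{-1}$ on the outside, so a single application does not close the argument.
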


\begin{proof} Assume $(I-a\partial)^{-1}(f)|_\gamma\truncof (I-a\partial)^{-1}(f)$. Then we have
$\alpha\in \supp a$,  $\beta\in \supp f$, and $N\in \mathbb{N}^{\ge 1}$ such that
 $\gamma\leq N\alpha + \beta$. 
Put $a_1:=a|_{\alpha}$, $a_2:= a-a_1$, $f_1:=f|_\beta$, and $f_2:=f-f_1$. 
Set $P=a_1\partial$ and $Q=a_2\partial$, so $a\partial=P+Q$.  
Then
\begin{align*}
(I-a\partial)\inv(f)\ &=\ (I-a\partial)\inv(f_1) + (I-a\partial)\inv(f_2)\\
&=\ (I-a\partial)\inv(f_1) + \sum_{n=0}^\infty (P+Q)^n(f_2).
\end{align*}
Note that 
\[\sum_{n=0}^{\infty}(P+Q)^n\ =\ \sum_{n_{0}} P^{n_{0}} + \sum_{n_{0},n_{1}} P^{n_{0}}QP^{n_{1}} +\hspace{-0.3cm} \sum_{n_{0},n_{1},n_{2}} \hspace{-0.3cm}P^{n_{0}}QP^{n_{1}}QP^{n_{2}} + \cdots.\]
We have $Q(I-P)\inv=\sum_n QP^n$; raising both sides to the $m$th power gives 
\begin{align*} \big(Q(I-P)\inv\big)^m\ &=\ \sum_{n_1,\dots, n_m}QP^{n_1}QP^{n_2}\cdots QP^{n_m}, \text{ so}\\ \sum_{n=0}^{\infty}(P+Q)^n\ &=\ \sum_{m=0}^\infty (I-P)\inv \big(Q(I-P)\inv\big)^m,\ \text{  hence}\\
 (I-a\partial)\inv(f)\ &=\ (I-a\partial)\inv(f_1) + \sum_{m=0}^\infty (I-P)\inv \big(Q(I-P)\inv\big)^m(f_2). 
\end{align*}
Truncating at $\gamma$ yields
\[(I-a\partial)\inv(f)|_{\gamma}\ =\ (I-a\partial)\inv(f_1)|_{\gamma} + \left.\left(\sum_{m=0}^{N-1}(I-P)\inv (Q(I-P)\inv)^m(f_2)\right)\right|_\gamma.\] 
Since $f_1\truncof f$ the first summand of the right hand side lies in $R$. Since $a_1\truncof a$, we have $(I-P)\inv(h)\in R$ for all $h\in R$, and thus, using $a_2, f_2\in R$, 
\[\sum_{m=0}^{N-1}(I-P)\inv (Q(I-P)\inv)^m (f_2)\in R.\]
Therefore $(I-a\partial)^{-1}(f)|_\gamma\in R$. 
\end{proof}

\begin{theorem}\label{thmA} Let $E$ be a truncation closed differential subfield
of $\smallk((t^\Gamma))$. Let $\widehat{E}$ be the smallest differential subfield of 
$\smallk((t^\Gamma))$ that contains $E$ and is closed under $(I-a\partial)\inv$ for all $a\in \widehat{E}^{\prec 1}$. Then $\widehat{E}$ is truncation closed.
\end{theorem}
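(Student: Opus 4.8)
\section*{Proof proposal for Theorem~\ref{thmA}}

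The plan is to realize $\widehat E$ as a maximal truncation closed differential subfield that it contains, and to use Lemma~\ref{OperatorLemma} together with Proposition~\ref{D} to enlarge any truncation closed differential subfield that is strictly smaller. Let $\mathcal M$ be the set of truncation closed differential subfields $F$ of $\smallk((t^\Gamma))$ with $E\subseteq F\subseteq\widehat E$; it contains $E$, and the union of a chain in $\mathcal M$ is again a truncation closed differential subfield lying between $E$ and $\widehat E$, so $\mathcal M$ is inductive and Zorn's lemma provides a maximal $R\in\mathcal M$. It suffices to show $R=\widehat E$, for then $\widehat E$ is truncation closed. If $R\subsetneq\widehat E$, then by minimality of $\widehat E$ the field $R$ is not closed under $(I-a\partial)\inv$ for all $a\in R^{\prec 1}$; that is, there are $a\in R^{\prec 1}$ and $f\in R$ with $(I-a\partial)\inv(f)\notin R$. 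I will derive a contradiction by producing such a pair that can be adjoined to $R$ without leaving $\mathcal M$.

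Call $(a,f)$ with $a\in R^{\prec 1}$ and $f\in R$ \emph{ready} if $(I-b\partial)\inv(h)\in R$ for every $b\truncof a$ and every $h\in R$, and $(I-a\partial)\inv(g)\in R$ for every $g\truncof f$; since $R$ is truncation closed, all such $b$ and $g$ lie in $R$, so readiness of $(a,f)$ is exactly the hypothesis of Lemma~\ref{OperatorLemma} applied to $R$, $a$, $f$. Assuming $(I-a\partial)\inv(f)\notin R$ for some pair $(a,f)$, I claim there is a ready such pair. Any $a$ occurring in such a pair is nonzero, since $(I-0\partial)\inv=I$; assign to it the ordinal $\|a\|$, namely the order type of the well-ordered set $\supp a$, and note that a proper truncation $b\truncof a$ is either $0$ or has $\supp b$ a proper initial segment of $\supp a$, so $\|b\|<\|a\|$. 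Choose $a$ with $\|a\|$ least; the first readiness condition then holds, since a counterexample $b\truncof a$, $h\in R$ with $(I-b\partial)\inv(h)\notin R$ would have $\|b\|<\|a\|$. Fixing this $a$, choose $f\in R$ with $(I-a\partial)\inv(f)\notin R$ for which the order type $\|f\|$ of $\supp f$ is least; the second readiness condition then holds, since $g\truncof f$ with $(I-a\partial)\inv(g)\notin R$ would have $\|g\|<\|f\|$. So $(a,f)$ is ready, and $(I-a\partial)\inv(f)\notin R$.

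For this ready pair, Lemma~\ref{OperatorLemma} shows that all proper truncations of $s:=(I-a\partial)\inv(f)$ lie in $R$. Hence $R[s]$ is truncation closed by Proposition~\ref{D}(ii), and the field $R(s)$ it generates is truncation closed by Proposition~\ref{D}(i). Since $s\notin R$ we have $a\ne 0$, so from $(I-a\partial)(s)=f$, i.e.\ $s-a\,\partial(s)=f$, we obtain $\partial(s)=(s-f)/a\in R(s)$, so $R(s)$ is a differential subfield; also $s=(I-a\partial)\inv(f)\in\widehat E$ since $a\in\widehat E^{\prec 1}$ and $f\in\widehat E$, whence $R(s)\subseteq\widehat E$. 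Thus $R(s)\in\mathcal M$ and $R(s)$ properly contains $R$, contradicting the maximality of $R$. Therefore $R=\widehat E$ is truncation closed.

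The substantive point, and where I expect the real work, is the readiness argument: one must organize the adjunctions so that whenever $(I-a\partial)(y)=f$ is solved, all the lower equations $(I-b\partial)(y)=h$ with $b\truncof a$ and $(I-a\partial)(y)=g$ with $g\truncof f$ that enter the proof of Lemma~\ref{OperatorLemma} have already been solved inside $R$. This works precisely because supports in $\smallk((t^\Gamma))$ are well-ordered, so the truncation relation among the truncations of a fixed series is well-founded and the two least-rank choices above exist; in particular a plain application of Zorn's lemma to $\mathcal M$ suffices, rather than an explicit transfinite construction of $\widehat E$.
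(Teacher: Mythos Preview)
Your proof is correct and follows essentially the same approach as the paper: take a maximal truncation closed differential subfield $R$ of $\widehat E$ containing $E$ via Zorn, choose a pair $(a,f)$ with $(I-a\partial)^{-1}(f)\notin R$ minimizing the lexicographic pair of support order types, apply Lemma~\ref{OperatorLemma} and Proposition~\ref{D} to adjoin the solution, and contradict maximality. Your write-up is in fact more explicit than the paper's on two points it leaves implicit---why the minimal choice yields the hypotheses of Lemma~\ref{OperatorLemma}, and why $R(s)$ is a differential subfield (via $\partial(s)=(s-f)/a$).
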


\begin{proof}
Let $F$ be a maximal truncation closed differential subfield of $\widehat{E}$ 
containing $E$. (Such $F$ exists by Zorn's Lemma.) It suffices to show that 
$F= \widehat{E}$. Assume $F\ne \widehat{E}$. Then there exist  $a\in F^{\prec 1}$ and $f\in F$ with 
$(I-a\partial)\inv(f) \notin F$. Take such $a$ and $f$ for which
$(\alpha, \beta)$ is minimal for lexicographically ordered pairs of ordinals,
where $\alpha$ is the order type of the support of $a$ and $\beta$ that of
 $f$. By minimality of $(\alpha,\beta)$ we can apply Lemma \ref{OperatorLemma} to $F,a$, and $f$ to get that $F\big((I-a\partial)\inv(f)\big)$ is 
truncation closed, 
using also parts (i) and (ii) of Proposition~\ref{D}. Since $F\big((I-a\partial)\inv(f)\big)$ is a 
differential subfield of $\widehat{E}$, this contradicts the maximality of $F$. 
\end{proof}

\subsection*{Complementing the previous results}
Suppose $\smallk_1$ is a differential 
subfield of $\smallk$ and $E$ is a
subfield of 
$\smallk((t^\Gamma))$ such that $a\in \smallk_1$ and
$c(\gamma)\in \smallk_1$ whenever $at^\gamma\in E$, $a\in \smallk^\times$, $\gamma\in \Gamma$. Let $\Gamma_1$ be the subgroup of $\Gamma$ generated by the
$\gamma\in \Gamma$ with $at^\gamma\in E$ for some $a\in \smallk^\times$. 
In connection with Lemma~\ref{DifFieldGenTrunc} we note:

\medskip\noindent
{\em The differential subfield of $\smallk((t^\Gamma))$ generated by $E$ is 
contained in  $\smallk_1((t^{\Gamma_1}))$.}

\medskip\noindent
This is because $E\subseteq \smallk_1((t^{\Gamma_1}))$ and $\smallk_1((t^{\Gamma_1}))$ is a differential subfield of 
$\smallk((t^\Gamma))$. Likewise we can complement Theorem~\ref{thmA}:

\medskip\noindent
{\em If
$\partial E\subseteq E$ and $\widehat{E}$ is the smallest 
differential subfield of $\smallk((t^\Gamma))$ that contains $E$ and is closed under
$(I-a\partial)\inv$ for all $a\in \widehat{E}^{\prec 1}$,   
then $\widehat{E}\subseteq \smallk_1((t^{\Gamma_1}))$.}

\medskip\noindent
This is because $\smallk_1((t^{\Gamma_1}))$ is closed under
$(I-a\partial)\inv$ for all $a\in\smallk_1((t^{\Gamma_1}))^{\prec 1}$. 

\subsection*{Exponentiation} Our aim is to apply the material above
to the differential field $\T_{\exp}$ of purely exponential transseries. 
The construction of $\T_{\exp}$ involves an
iterated formation of Hahn fields, where at each step we apply
 the following general procedure (copied from \cite{DMM}). 

\medskip\noindent
Define a {\bf pre-exponential ordered field}
to be a tuple~$(E, A, B, \exp)$ such that
\begin{enumerate}
\item $E$ is an ordered field;
\item $A$ and $B$ are additive subgroups of $E$ with $E=A\oplus B$ and $B$
convex in $E$;
\item $\exp\colon B\to E^{\times}$ is a strictly increasing group morphism (so $\exp(B)\subseteq E^>$).
\end{enumerate}
Let $(E,A,B,\exp)$ be a pre-exponential ordered field. We view $A$ as the part of $E$ where exponentiation is not yet defined, and accordingly we introduce
a ``bigger'' pre-exponential ordered field $(E^*,A^*,B^*,\exp^*)$ as follows:
Take a {\em multiplicative\/} copy $\exp^*(A)$ of the ordered additive
group $A$ with order-preser\-ving isomorphism 
$\exp^*\colon A\to\exp^*(A)$,
and put $E^*\ :=\ E[[\exp^*(A)]]$. Viewing $E^*$ as an ordered Hahn field over 
the ordered coefficient field $E$, we set
$$ A^*\ :=\ E[[\exp^*(A)^{\succ 1}]],\qquad B^*\ :=\ (E^*)^{\preceq 1}\ =\ E\oplus 
(E^*)^{\prec 1}\ =\ A \oplus B \oplus (E^*)^{\prec 1}.$$ 
Note that $\exp^*(A)^{\succ 1}=\exp^*(A^{>})$. Next we extend
$\exp^*$ to $\exp^*\colon B^*\to (E^*)^{\times}$ by
$$\exp^*(a+b+\varepsilon)\ :=\ \exp^*(a)\cdot \exp(b)\cdot \sum_{n=0}^\infty
\frac{\varepsilon^n}{n!} \qquad(a\in A,\ b\in B,\ \varepsilon\in (E^*)^{\prec 1}).$$
Then $E\subseteq B^*=\operatorname{domain}(\exp^*)$, and $\exp^*$ extends $\exp$. Note that $E<(A^*)^>$ (but $\exp^*(E)$ is cofinal in $E^*$ if $A\ne \{0\}$). In particular, for $a\in A^{>}$, we have 
$$\exp^*(a)\in \exp^*(A^{>})\ \subseteq\ (A^*)^>,\ \text{  so $\exp^*(a)\ >\ E$.}$$

\medskip\noindent
Assume also that a derivation $\partial$ on the field $E$ is given that respects
exponentiation, that is, $\partial(\exp(b))=\partial(b)\exp(b)$ for all 
$b\in B$. Then we extend $\partial$ uniquely to a strongly $E$-linear 
derivation $\partial$ on 
the field $E^*$ by requiring that $\partial(\exp^*(a))=\partial(a)\exp^*(a)$
for all $a\in A$. This falls under the general construction at the beginning of this section with $\smallk=E$ and $\fM= \exp^*(A)$ by taking the additive function 
$c: \exp^*(A) \to E$ to be given by $c(\exp^*(a))=\partial(a)$.  
It is also easy to check that this extended derivation on $E^*$ again respect
exponentiation in the sense that  
$\partial(\exp(b))=\partial(b)\exp(b)$ for all 
$b\in B^*$.


\section{Directed Unions of Hahn fields}
\noindent
Let $\smallk$ be a field and $\fM$ a (multiplicative) ordered abelian 
group with distinguished subgroups $\fM_n$, $n=0,1,2,\dots$, such that, with
$\fM^{(n)}:=\fM_0 \cdots \fM_n\subseteq \fM$, we have:\begin{enumerate}
\item $\fM=\bigcup_n \fM^{(n)}$;
\item $\fM_m \prec \fm_n$ for all $m<n$ and $\fm_n\in \fM^{\succ 1}_n$.
\end{enumerate} 
Thus $\fM^{(n)}$ is a convex subgroup of $\fM$. Considering subgroups of $\fM$ 
as 
ordered subgroups, we have the anti-lexicographically ordered internal 
direct product
$$\fM^{(n)}\ =\ \fM_0 \times \cdots \times \fM_n\ \subseteq\ \fM.$$
Setting $\smallk_n:= \smallk[[\fM^{(n)}]]$ we have 
$\smallk_0=\smallk[[\fM_0]]$ and field extensions
forming a chain
$$\smallk\subseteq \smallk_0\subseteq \smallk_1\subseteq \cdots\subseteq \smallk_n\subseteq \smallk_{n+1}\subseteq \cdots$$
of Hahn fields over $\smallk$. We also identify $\smallk_{n}$ with the
Hahn field $\smallk_{n-1}[[\fM_n]]$ over $\smallk_{n-1}$ in the usual way
(where $\smallk_{-1}:=\smallk$ by convention). We set 
$$\smallk_{*}\ :=\ \bigcup_n \smallk_n, \quad\text{ a truncation closed subfield of the Hahn field }\smallk[[\fM]].$$
A set $S\subseteq \smallk_{*}$ is said to be truncation closed
if it is truncation closed as a subset of the Hahn field $\smallk[[\fM]]$.
Take an order reversing group isomorphism $v: \fM \to \Gamma$ onto an
additive ordered abelian group $\Gamma$. Then $v$ extends to the valuation
$v:\smallk[[\fM]]^\times \to \Gamma$ given by $v(f)=v(\max \supp f)$. Note that for 
$a\in \smallk_n= \smallk_{n-1}[[\fM_n]]$ we have: 
$$a\prec_{\fM_n}1\ \Longleftrightarrow\ va>v(\fM^{(n-1)}).$$

\medskip\noindent
We now assume that $\smallk$ is even a differential field, and that for
every $n$ there is given an additive map $c_n: \fM_{n} \to \smallk_{n-1}$.
Then we make $\smallk_n$ into a differential field by recursion on $n$:
$\smallk_n=\smallk_{n-1}[[\fM_n]]$ has the derivation given by the 
derivation of $\smallk_{n-1}$
and the additive map $c_n: \fM_{n} \to \smallk_{n-1}$. Thus $\smallk_{n}$ is a differential field extension of $\smallk_{n-1}$. 
We make $\smallk_{*}$ into a differential field so that it contains every
$\smallk_n$ as a differential subfield. It follows easily that
$$(\fm_0\cdots \fm_n)^\dagger\ =\  c_0(\fm_0) +\cdots + c_n(\fm_n), \qquad (\fm_0\in \fM_0, \dots, \fm_n\in \fM_n).$$ 
This suggest we define the additive function $c: \fM \to \smallk_*$ by 
$$c(\fm_0\cdots \fm_n)\ :=\ c_0(\fm_0) +\cdots + c_n(\fm_n) \qquad (\fm_0\in \fM_0, \dots, \fm_n\in \fM_n),$$
so $c$ extends each $c_n$, and $\fm^\dagger=c(\fm)$ for all $\fm\in \fM$.


\bigskip\noindent
We will say that this derivation is \textbf{transerial} if $c_n(\fM_n^{\neq 1})\succ_{\fM_{n-1}}1$ for $n\geq 1$.
One natural question we could ask at this point is the following:
Is the differential field generated by a truncation closed set inside $\smallk_*$ truncation closed? Unfortunately the answer is no.

\medskip\noindent
\textbf{Example:} Let $\alpha_0 > \alpha_ 1 > \cdots >\beta_0 > \beta_1 > \cdots 0$ be a decreasing sequence of real numbers such that $g:=\sum_n \alpha_nx^{\alpha_n}$ and $h:=\sum_n \beta_n x^{\beta_n}$ are differentially transcendental over $\R(x)$. Then the field $F=\R(x,f)$ with $f = \exp(\sum_n x^{\alpha_n} + \sum_n x^{\beta_n})$ is truncation closed, but $K=\R\langle x, f\rangle$, the differential ring generated by $F$, is not truncation closed. To see this, note that $g+h = f^{\dagger} \in K$, and if $K$ is truncation closed, then both $g$ and $h$ would be in $K$ making the differential transcendence degree of $F/\R(X)$ greater than $2$, a contradiction.


\medskip\noindent
Below we let $\fm_i$ lie in $\fM_i$ for $i\in \mathbb{N}$. For $f\in \smallk[[\fM]]$.
For $S$ a subset of $\smallk_{*}$ we set $\supp(S):= \bigcup_{f\in S}\supp(f)$. 
We say that $S$ is $\il$-closed if for all $\fm_0\cdots \fm_n \in \supp(S)$ we have $\int c_i(\fm_i), {\fm_i},\int c_1(\fm_1) + \cdots +\int c_n(\fm_n), \fm_1\cdots \fm_n \in S$ for $i \in \{1,\ldots,n\}$. We say that $S$ is $\til$-closed if $S$ is both truncation closed and $\il$-closed.

Let $\smallk S$ be the $\smallk$-linear subspace of $\smallk_{*}$ generated by $S$. Note that $\supp\smallk S = \supp(S)$ and if $S$ is $\til$-closed, then so is $\smallk S$ and $\supp(S) = \supp \smallk S \subseteq \smallk S$. Let $M(S):=(\supp{S})^*$ be the submonoid of $\fM$ generated by $\supp{S}$, and $G(S)$ the subgroup of $\fM$ generated by $\supp{S}$. Thus $\supp{\smallk[S]}\subseteq M(S)$.
The following hold. 

\begin{lemma}\label{tL0}
$\supp\smallk(S)\subseteq \Gamma(S)$
\end{lemma}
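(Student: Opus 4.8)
The plan is to reduce the statement to the familiar description of supports inside a subfield of a Hahn field. Writing $\smallk(S)$ for the subfield of $\smallk_*$ generated by $S$ over $\smallk$, and recalling that $\Gamma(S)=G(S)$ is the subgroup of $\fM$ generated by $\supp S$, every element of $\smallk(S)$ is a quotient $f/g$ with $f,g\in\smallk[S]$ and $g\ne 0$. Since $\supp\smallk[S]\subseteq M(S)\subseteq\Gamma(S)$ has already been noted, the only thing to check is that $\supp(g\inv)\subseteq\Gamma(S)$ for every nonzero $g\in\smallk[S]$; the claim then follows from $\supp(f/g)\subseteq(\supp f)(\supp g\inv)\subseteq M(S)\cdot\Gamma(S)=\Gamma(S)$.

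To control $\supp(g\inv)$, I would fix $0\ne g\in\smallk[S]$, put $\fm:=\max\supp g\in M(S)$, and write $g=c\,\fm\,(1-\varepsilon)$ with $c\in\smallk^\times$ the coefficient of $\fm$ in $g$ and $\varepsilon\in\smallk[[\fM]]$ satisfying $\varepsilon\prec 1$ and $\supp\varepsilon\subseteq\fm\inv\supp g$. As $\supp g\subseteq M(S)$ and $\fm\in M(S)$, this gives $\supp\varepsilon\subseteq\fm\inv M(S)\subseteq\Gamma(S)$. Applying Neumann's Lemma (Proposition~\ref{fGstar}) to the reverse well-ordered set $\fG:=\supp\varepsilon\subseteq\fM^{\prec 1}$, the family $(\varepsilon^n)_n$ is summable and $\sum_n\varepsilon^n$ has support in $\fG^*=\bigcup_n\fG^n$; hence $g\inv=c\inv\fm\inv\sum_n\varepsilon^n$ has $\supp(g\inv)\subseteq\fm\inv\fG^*\subseteq\Gamma(S)$, using that the submonoid $\fG^*$ generated by the subset $\fG$ of the group $\Gamma(S)$ stays inside $\Gamma(S)$.

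Putting these together yields $\supp\smallk(S)\subseteq\Gamma(S)$. There is no genuine obstacle: the only step needing attention is the inclusion $\supp\varepsilon\subseteq\fm\inv M(S)$, which is exactly what makes Neumann's Lemma return the support of the geometric series into the group $\Gamma(S)$ rather than merely into the monoid $M(S)$. (If $\smallk(S)$ is instead meant to denote the $\smallk$-linear span $\smallk S$, the statement is immediate from $\supp\smallk S=\supp S\subseteq M(S)\subseteq\Gamma(S)$, with no inverses involved.)
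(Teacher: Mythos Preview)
Your proof is correct and follows essentially the same route as the paper's own argument: write an arbitrary element of $\smallk(S)$ as $f/h$ with $f,h\in\smallk[S]$, reduce to showing $\supp(1/h)\subseteq G(S)$, factor $h=c\fm(1-\varepsilon)$ with $\fm\in M(S)$ and $\supp\varepsilon\subseteq\fm^{-1}M(S)$, and use the geometric series to conclude. Your write-up is slightly more explicit in invoking Neumann's Lemma and in tracking where the inclusions land, but there is no substantive difference.
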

\begin{proof}
Let $g=f/h\in \smallk(S)$ with $f,h\in \smallk[S]$ since 
\[\supp(g)\subseteq \supp(f)\supp(1/h),\] 
it suffices to show that $\supp(1/h)\subseteq G(S)$ let $r\in \smallk, h\asymp \monm\in M(S)$ and $\epsilon\prec 1$ be such that $h=c\monm(1-\epsilon)$. Note that $\supp(\epsilon) \subseteq \monm\inv M(S)$. We have $1/h = (c\fm)\inv \sum_n \epsilon^n$, so $\supp(1/h)\subseteq G(S)$
\end{proof}

\begin{lemma}\label{tL1} If $S$ is $\til$-closed, then so is $\smallk S\cup M(S)$.
\end{lemma}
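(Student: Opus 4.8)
The plan is to establish the two halves of $\til$-closedness separately for $V := \smallk S\cup M(S)$: truncation closedness and $\il$-closedness. Truncation closedness is the routine half. Since $S$ is $\til$-closed, so is its $\smallk$-linear span $\smallk S$, hence $\smallk S$ is truncation closed and every truncation of an element of $\smallk S$ remains in $\smallk S\subseteq V$. If instead $\fm\in M(S)$, then $\fm$, viewed as an element of $\smallk_*$, is a single monomial, so its truncation at any $\fn\in\fM$ is either $\fm$ itself or $0\in\smallk S$; in either case the truncation lies in $V$. Thus $V$ is truncation closed.

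For $\il$-closedness, I would first note that $\supp V=\supp(\smallk S)\cup M(S)=\supp(S)\cup M(S)=M(S)$, since $\supp(\smallk S)=\supp(S)\subseteq M(S)$ and each element of $M(S)$ has singleton support. So fix $\monm=\fm_0\cdots\fm_n\in M(S)$ with $\fm_i\in\fM_i$; the task is to place $\fm_i$ (for $1\le i\le n$), $\int c_i(\fm_i)$, $\int c_1(\fm_1)+\cdots+\int c_n(\fm_n)$, and $\fm_1\cdots\fm_n$ into $V$. Write $\monm=\fp_1\cdots\fp_k$ with each $\fp_j\in\supp(S)$, fix $N$ with $\monm$ and all $\fp_j$ in $\fM^{(N)}=\fM_0\times\cdots\times\fM_N$, and decompose $\fp_j=\fp_{j,0}\cdots\fp_{j,N}$ with $\fp_{j,i}\in\fM_i$ (padding shorter factors with $1$'s). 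Applying the projection homomorphism $\fM^{(N)}\to\fM_i$ gives $\fm_i=\prod_{j}\fp_{j,i}$ for each $i$, and hence also $\fm_1\cdots\fm_n=\prod_j(\fp_{j,1}\cdots\fp_{j,N})$.

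Now apply $\il$-closedness of $S$ to each factor $\fp_j\in\supp(S)$: for every $i$ we obtain $\fp_{j,i}\in S$, $\fp_{j,1}\cdots\fp_{j,N}\in S$, $\int c_i(\fp_{j,i})\in S$, and $\int c_1(\fp_{j,1})+\cdots+\int c_N(\fp_{j,N})\in S$. A monomial lying in $S$ lies in $\supp(S)$, so $\fm_i=\prod_j\fp_{j,i}\in M(S)$ and $\fm_1\cdots\fm_n=\prod_j(\fp_{j,1}\cdots\fp_{j,N})\in M(S)$, which disposes of the two ``monomial'' requirements. For the integral terms, additivity of $c_i\colon\fM_i\to\smallk_{i-1}$ gives $c_i(\fm_i)=\sum_j c_i(\fp_{j,i})$, so the finite sum $\sum_j\int c_i(\fp_{j,i})$ is an antiderivative of $c_i(\fm_i)$, equals $\int c_i(\fm_i)$, and lies in $\smallk S$; summing over $i$ shows likewise that $\int c_1(\fm_1)+\cdots+\int c_n(\fm_n)=\sum_j\big(\int c_1(\fp_{j,1})+\cdots+\int c_N(\fp_{j,N})\big)\in\smallk S$. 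Hence every required element lies in $V$, and $V$ is $\il$-closed.

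The main obstacle is the combinatorial bookkeeping of the two nested decompositions: one must be careful that the canonical $\fM_i$-component of the product $\monm=\fp_1\cdots\fp_k$ really is the product of the $\fM_i$-components of the $\fp_j$ (immediate from $\fM^{(N)}$ being an internal direct product, provided the shorter factors are padded by $1$'s consistently), and that the antiderivative $\int$ can be taken so as to be additive over these finite sums, so that $\int c_i(\fm_i)$ is exactly $\sum_j\int c_i(\fp_{j,i})$. Once those two points are nailed down, every verification above is routine.
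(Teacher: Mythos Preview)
Your proposal is correct and follows essentially the same approach as the paper: decompose an element of $M(S)$ as a product of monomials from $\supp(S)$, project onto the $\fM_i$-components via the internal direct product, and use $\il$-closedness of $S$ to place the monomial pieces in $M(S)$ and the integral pieces in $\smallk S$ by additivity of $c_i$. Your write-up is in fact more careful than the paper's about the bookkeeping (padding with $1$'s, the projection argument, and the additivity of $\int$), but the underlying argument is identical.
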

\begin{proof}
For $\monm=\fm_0\cdots\fm_n\in \supp(\smallk S)$ it is clear that $\int c_i(\fm_i), {\fm_i},\int c_1(\fm_1)+\cdots + c_n(\fm_n), \fm_1\cdots\fm_n\in S\subseteq \smallk S\cup M(S)$ for $i\in \{1,\ldots,n\}$. 
Let $\monm =\fm_0\cdots\fm_n\in M(S)$. Then there are 
$(\fm_{i,j})_{i,j}\in \fM$ such that $\sum_j \fm_{i,j} = \fm_i$ and ${\fm_{0,j} \cdots \fm_{n,j}}\in \supp(S)$ for $i=0,\ldots, n$, $j\in \{1, \ldots,m\}$. Hence $\int c_i(\fm_{i,j}) \in S$ and ${\fm_{1,j}\cdots \fm_{n,j}}\in S$ for $i=1,\ldots, n$, $j\in \{1, \ldots,m\}$, so $\int c_i(\fm_i),\int c_1(\fm_1) + \cdots \int c_n(\fm_n)\in \smallk S$ and ${\fm_i},{\fm_1\cdots \fm_n}\in M(S)$ for $i\in \{1,\ldots,n\}$. It remains o note that $\smallk S$ is truncation closed and so is $M(S)$  
\end{proof}

\begin{corollary}\label{tL2} If $S$ is $\til$-closed, then so is $\smallk[S]$.
\end{corollary}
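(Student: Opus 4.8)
The plan is to deduce this from Lemma~\ref{tL1} and Proposition~\ref{D}(i) by exhibiting $\smallk[S]$ as the subring of $\smallk_*$ generated by a set already known to be $\til$-closed. The set I would use is $\smallk S\cup M(S)$, which is $\til$-closed by Lemma~\ref{tL1}.

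The first step is to identify $\smallk[S]$ with the $\smallk$-subalgebra of $\smallk_*$ generated by $\smallk S\cup M(S)$. The inclusion of $\smallk[S]$ in this subalgebra is immediate from $S\subseteq\smallk S$. For the reverse inclusion, $\smallk S\subseteq\smallk[S]$ is clear, and $M(S)\subseteq\smallk[S]$ follows because $\supp(S)\subseteq\smallk S\subseteq\smallk[S]$ (observed just before the lemmas) and $M(S)=(\supp S)^*$ is the monoid generated by $\supp(S)$, so its elements are finite products of elements of $\smallk[S]$. Since $\smallk$ itself is truncation closed (a truncation of a constant is that constant or $0$) and $\smallk S\cup M(S)$ is truncation closed by Lemma~\ref{tL1}, the set $\smallk\cup\smallk S\cup M(S)$ is truncation closed, and Proposition~\ref{D}(i) then gives that $\smallk[S]$ is truncation closed.

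The second step is $\il$-closedness of $\smallk[S]$, and here I would use $\supp\smallk[S]\subseteq M(S)$. For $\fm_0\cdots\fm_n\in\supp\smallk[S]$, hence in $M(S)$, the $\il$-closedness of $\smallk S\cup M(S)$ supplied by Lemma~\ref{tL1} puts $\int c_i(\fm_i)$, $\fm_i$, $\int c_1(\fm_1)+\cdots+\int c_n(\fm_n)$ and $\fm_1\cdots\fm_n$ (for $i\in\{1,\dots,n\}$) into $\smallk S\cup M(S)$, which is contained in $\smallk[S]$; so $\smallk[S]$ is $\il$-closed, and therefore $\til$-closed.

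I do not anticipate a real difficulty: the content is already carried by Lemma~\ref{tL1} and Proposition~\ref{D}(i). The one point needing a moment's care is the identification of $\smallk[S]$ with the ring generated by $\smallk S\cup M(S)$, which hinges on each monomial of $\supp(S)$ lying in $\smallk S$ — a fact that itself comes from writing such a monomial (up to a nonzero scalar) as a difference of two truncations of a suitable element of $S$. (For truncation closedness alone one could bypass $M(S)$ and use only that $\smallk S$ is truncation closed with $\smallk[\smallk S]=\smallk[S]$, but the $\il$-closedness genuinely needs the analysis of $M(S)$ from Lemma~\ref{tL1}.)
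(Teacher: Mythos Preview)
Your proof is correct and follows essentially the same approach as the paper's: both rely on Lemma~\ref{tL1} to handle $\il$-closedness via $\supp\smallk[S]\subseteq M(S)$ and on Proposition~\ref{D}(i) for truncation closedness. The paper's version is slightly terser, simply noting $\supp S\subseteq\smallk S\subseteq\smallk[S]$ (hence $\supp\smallk[S]=M(S)$) and invoking truncation closedness of $\smallk[S]$ directly from Proposition~\ref{D}(i) applied to the truncation closed set $S$, rather than passing through the larger generating set $\smallk S\cup M(S)$; but the substance is the same.
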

\begin{proof} Assume $S$ is $\til$-closed. Then $\supp{S}\subseteq \smallk S\subseteq \smallk[S]$, so we have $\supp{\smallk[S]}=M(S)$. It remains to note that $\smallk[S]$ is truncation closed. 
\end{proof}
\noindent
Note that to check that a subfield $F$ of $\smallk_*$ is $\il$-closed it suffices to check that for all $n>0$ and $\fm_0\cdots\fm_n\in \supp(F)$ we have $\int c_n(\fm_n), {\fm_n}\in F$

\begin{corollary}\label{tL3} If $S$ is $\til$-closed, then $\smallk(S)$ is $\til$-closed and $G(S)\subseteq \smallk(S)$.
\end{corollary}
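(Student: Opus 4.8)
The plan is to split the statement into its three parts: that $\smallk(S)$ is truncation closed, that $G(S)\subseteq\smallk(S)$, and that $\smallk(S)$ is $\il$-closed. The first is immediate: by Corollary~\ref{tL2} the ring $\smallk[S]$ is truncation closed, and $\smallk(S)$ is the subfield of $\smallk_*$ it generates, so $\smallk(S)$ is truncation closed by Proposition~\ref{D}(i). The second is almost as quick: by the remark preceding Lemma~\ref{tL1} we have $\supp(S)\subseteq\smallk S\subseteq\smallk(S)$, and since $\smallk(S)$ is a subfield of $\smallk_*$ it is closed under products and inverses of its nonzero elements, hence contains the subgroup $G(S)$ of $\fM$ generated by $\supp(S)$.

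The substantive point is the $\il$-closedness of $\smallk(S)$. Here I would use the criterion noted just before the corollary: it suffices to show that for every $n>0$ and every $\monm=\fm_0\cdots\fm_n\in\supp\smallk(S)$ (with $\fm_i\in\fM_i$) one has $\fm_n\in\smallk(S)$ and $\int c_n(\fm_n)\in\smallk(S)$. By Lemma~\ref{tL0}, $\monm\in G(S)$, so $\monm=\fn_1^{\epsilon_1}\cdots\fn_r^{\epsilon_r}$ with $\fn_j\in\supp(S)$ and $\epsilon_j\in\{1,-1\}$. Choose $N\ge n$ with $\fn_1,\dots,\fn_r\in\fM^{(N)}$, decompose $\fn_j=\fn_{j,0}\cdots\fn_{j,N}$ with $\fn_{j,i}\in\fM_i$, and put $\fm_i:=1$ for $n<i\le N$. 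Since $\fM$ is commutative and the decomposition $\fM^{(N)}=\fM_0\times\cdots\times\fM_N$ is a direct product, comparing $\fM_i$-components in $\monm=\fm_0\cdots\fm_N=\prod_i\bigl(\prod_j\fn_{j,i}^{\epsilon_j}\bigr)$ gives $\fm_i=\prod_j\fn_{j,i}^{\epsilon_j}$ for each $i\le N$; in particular $\fm_n=\prod_j\fn_{j,n}^{\epsilon_j}$. Now fix $j$: if $\fn_{j,n}=1$ then $\fn_{j,n}\in\smallk(S)$ and $\int c_n(\fn_{j,n})=\int c_n(1)=\int 0=0\in\smallk(S)$; if $\fn_{j,n}\ne 1$ then $\fn_j\in\supp(S)$ has nontrivial $\fM_n$-component with $n\ge 1$, so the $\il$-closedness of $S$ yields $\fn_{j,n}\in S\subseteq\smallk(S)$ and $\int c_n(\fn_{j,n})\in S\subseteq\smallk(S)$. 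Therefore $\fm_n=\prod_j\fn_{j,n}^{\epsilon_j}\in\smallk(S)$, and by additivity of $c_n$ and $\smallk$-linearity of integration $\int c_n(\fm_n)=\sum_j\epsilon_j\int c_n(\fn_{j,n})\in\smallk(S)$, as desired.

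I expect the only genuine friction to be bookkeeping: decomposing $\monm$ into its $\fM_i$-components via the direct product structure of $\fM^{(N)}$, and correctly handling the degenerate factors $\fn_j$ whose $\fM_n$-component is trivial — as well as noting that the $i=0$ slot, which the $\il$-closure condition does not constrain, is never needed. One should also keep the integration conventions on $\smallk_*$ in mind, since the computation of $\int c_n(\fm_n)$ uses only that $\int$ is additive on its domain.
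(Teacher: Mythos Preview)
Your proof is correct and follows essentially the same route as the paper's (very terse) argument: truncation closedness from Proposition~\ref{D}(i), then $\supp\smallk(S)\subseteq G(S)$ via Lemma~\ref{tL0}, then the subfield criterion for $\il$-closedness by decomposing a monomial of $G(S)$ into factors from $\supp(S)$ and invoking the $\il$-closedness of $S$ componentwise. The only cosmetic difference is that the paper obtains $G(S)\subseteq\smallk(S)$ by first noting the equality $G(S)=\supp\smallk(S)$ (using that a truncation closed subfield containing $\smallk$ contains each monomial in its support), whereas you argue more directly from $\supp(S)\subseteq\smallk S\subseteq\smallk(S)$ and closure of the field under products and inverses; both are fine.
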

\begin{proof} Assume $S$ is $\til$-closed. Then $\smallk(S)$ is truncation closed, and so it follows that $G(S)=\supp{\smallk(S)}\subseteq \smallk(S)$.
If ${\fm_0 \cdots \fm_n}\in G(S)$, then ${\fm_n} \in G(S)$ and $\int c_n(\fm_n) \in \smallk S$.
\end{proof}

\begin{lemma}\label{tL5} Suppose $S$ is such that for each $\fm_0 \fm\in \supp(S)$ we have $\fm\in S$ and $\int c(\fm)\in S$. 
If $f\in \smallk[S]$, then every truncation of $f'$ lies in
$\sum\smallk[S]\smallk[S]'$, the $\smallk[S]$-submodule of $\smallk_*$ generated by $\smallk[S]'$.
\end{lemma}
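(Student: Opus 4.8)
\emph{Proof plan.} The plan is to reduce the statement to a computation of truncations of derivatives of products of elements of $S$, and then to induct. Since $\partial$ and truncation are additive and satisfy $\partial(h)|_{\fn}=\partial(h|_{\fn})$, and since the finite products $s_{1}\cdots s_{k}$ with $s_{i}\in S$ (including the empty product $1$) span $\smallk[S]$ over $\smallk$, it is enough to show that $\bigl((s_{1}\cdots s_{k})|_{\fn}\bigr)'$ lies in the module $V:=\sum\smallk[S]\smallk[S]'$ for all such products and all $\fn\in\fM$. I would argue by induction on $k$, the case $k=0$ being trivial.

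The case $k=1$ carries the weight: for $s\in S$ one must see that $(s|_{\fn})'\in V$. Every monomial in $\supp(s|_{\fn})$ lies in $\supp(S)$; writing such a monomial as $\fm_{0}\fm$ with $\fm_{0}\in\fM_{0}$ and $\fm$ in the subgroup $\fM_{\ge 1}$ of $\fM$ generated by $\fM_{1}\cup\fM_{2}\cup\cdots$, the hypothesis supplies $\fm\in S$ and $\ell_{\fm}:=\int c(\fm)\in S$, and hence $c(\fm_{0}\fm)=c_{0}(\fm_{0})+\ell_{\fm}'$. Splitting $(s|_{\fn})'=\sum_{\monm}\bigl(\partial(s_{\monm})+s_{\monm}c(\monm)\bigr)\monm$ accordingly and collecting the monomials with a fixed $\fM_{\ge 1}$-part $\fm$, the ``$\ell_{\fm}'$''-terms amount to $\sum_{\fm} s_{[\fm]}\,\ell_{\fm}'$, where $s_{[\fm]}$ denotes the $\fM_{\ge 1}$-homogeneous component of $s$ at $\fm$; the remaining terms amount to applying to $s|_{\fn}$ the ``coefficient and level-$0$'' part of the derivation (the one associated with $\partial$ on $\smallk$ and with $c_{0}$ extended by $0$), which commutes with truncation and with passing to $\fM_{\ge 1}$-homogeneous components. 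The crux is then to show (a) that each $s_{[\fm]}$ lies in $\smallk[S]$ and its image under the level-$0$ part of the derivation lies in $V$, and (b) that the transfinite sum over $\fm$ can be reduced to a finite one by splitting off the part of $s$ supported below a suitable monomial, exactly as in the proof of Lemma~\ref{OperatorLemma}; with these in hand, $\ell_{\fm}\in S$ and $s_{[\fm]}\in\smallk[S]$ put $(s|_{\fn})'$ in $V$.

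For $k\ge 2$ I would write $f=s_{1}\cdot h$ with $h=s_{2}\cdots s_{k}$, use $f'=s_{1}'h+s_{1}h'$, and apply the product-truncation identity $(ab)|_{\fn}=(a|_{\fn/\max\supp b}\cdot b)|_{\fn}$ to reduce a truncation of each summand to a truncation of the product of (a truncation of a derivative of one of the $s_{i}$) with the remaining factors; the case $k=1$ and the inductive hypothesis place those truncations of derivatives in $V$. I expect the genuine obstacle to be precisely the persistence of truncations here: since neither $\smallk[S]$ nor $V$ is truncation closed in general — that is the content of the Example preceding the lemma — one cannot simply say $f|_{\fn}\in\smallk[S]$, and membership in $V$ must be propagated through every truncation, the delicate point being the $\fM_{0}$-monomials that appear as coefficients when one passes to $\fM_{\ge 1}$-homogeneous parts, together with the summability bookkeeping needed to remain inside the (only modular, not strongly closed) module $V$.
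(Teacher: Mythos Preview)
Your plan has a genuine gap at its very first step. You assert that $\partial(h)|_{\fn}=\partial(h|_{\fn})$ and use this to reduce the lemma to showing $((s_1\cdots s_k)|_{\fn})'\in V$. That identity holds in a single Hahn field $\smallk[[\fM_0]]$ (where indeed $\supp h'\subseteq\supp h$), but it is \emph{false} in the directed union $\smallk_*$: for $\fm\in\fM_n$ with $n\ge 1$ one has $\fm'=c_n(\fm)\,\fm$ with $c_n(\fm)\in\smallk_{n-1}$, and the support of $c_n(\fm)$ spreads the support of $\fm'$ over many $\smallk$-monomials, so a $\smallk$-truncation of $\fm'$ is in general neither $0$ nor $\fm'$. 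This is precisely what the Example earlier in the section exploits: there $f^\dagger=g+h$ is the logarithmic derivative of a single element of the field, yet its truncation $g$ is not in the differential ring generated. Hence even a complete proof of your target $((s_1\cdots s_k)|_{\fn})'\in V$ would not yield $((s_1\cdots s_k)')|_{\fn}\in V$, and the induction on $k$ is aimed at the wrong quantity throughout.

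The paper instead inducts on the level $n$ with $f\in\smallk_n\cap\smallk[S]$, analysing a given truncation $g$ of $f'$ directly. For $n=0$ the commutation you quote \emph{does} hold, giving the base case. For $n>0$ one first treats a single term $f=f_\fm\fm$ with $f_\fm\in\smallk_0$ and $\fm\in\fM_1\cdots\fM_n$: a truncation of $f'=(f_\fm'+f_\fm c(\fm))\fm$ splits as $g_1+g_2$, where $g_1$ (coming from $f_\fm'\fm$) is handled by the level-$0$ mechanism, and $g_2$ (coming from $f_\fm c(\fm)\fm$) uses the hypothesis ${\int}c(\fm)\in S$ together with the inductive hypothesis applied to truncations of $c(\fm)=({\int}c(\fm))'$ at level $\le n-1$. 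The general $f$ is then reduced to this case by truncating in the $\fM_1\cdots\fM_n$-direction, which \emph{does} commute with $\partial$ when one views $\smallk_n$ as a Hahn field over $\smallk_0$. Your splitting $c(\fm_0\fm)=c_0(\fm_0)+\ell_\fm'$ is exactly the right ingredient, but it must be deployed inside this level-wise induction on truncations of $f'$, not inside an induction on the number of factors aimed at $(f|_\fn)'$.
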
 
\begin{proof} Let $f\in \smallk_n\cap \smallk[S]$ and let $g$ be a truncation of $f'$; we prove by induction on $n$ that $g\in \sum\smallk[S]\smallk[S]'$. 
For $n=0$ we have
$f=\sum_{\fm\in \fM_0} f_\fm \fm$, and so $f'=\sum_{\fm \in \fM_0} (f_\fm' +c_0(\fm)f_\fm) \fm$. 
Then either $g=f'$ or $g=\sum_{\fm\in \fM^{\succ\fm_0}} (f_\fm' +c_0(\fm)f_\fm) \fm=(f_{|\fm_0})'$ for some $\fm_0\in \fM_0$, and as $f_{|\fm_0}\in \smallk[S]$ for all $\fm_0 \in \fM_0$, we get $g\in \smallk[S]'\subseteq \sum \smallk[S]\smallk[S]'$. 

Next, let $n>0$, and consider first the case $f=f_\fm \fm$ with $f_\fm\in \smallk_0$ and $\fm \in \fM_1\cdots\fM_n$. 
Then $f'=(f_\fm'+f_\fm c(\fm))\fm$, so $g=g_1+g_2$ where $g_1$ is a truncation of $f_\fm'\fm$ and
$g_2$ is a truncation of $f_\fm c(\fm)\fm$. Hence $g_1=((f_\fm)_{|\fm_0})'\fm$
with $\fm_0\in \fM_0$. Now $(f_\fm)_{|\fm_0}\fm$ is a truncation of
$f_\fm \fm=f\in \smallk[S]$, so $(f_\fm)_{|\fm_0}\fm\in \smallk[S]$. 
Therefore,
\begin{align*} \left((f_\fm)_{|\fm_0}\fm\right)'\ &=\ g_1 + (f_\fm)_{|\fm_0}c(\fm)\fm\in \smallk[S]',\ \text{ with} \\
(f_\fm)_{|\fm_0}c(\fm)\fm\ &=\ (f_\fm)_{|\fm_0}\fm c(\fm)\in \smallk[S]S'\quad\text{(using ${\int} c(\fm)\in S$)},
\end{align*}
so $g_1\in \sum \smallk[S]\smallk[S]'$. From $\int c(\fm)\in S\cap E_{n-1}$, we get by induction that every truncation of $c(\fm)$ lies in $\sum \smallk[S]\smallk[S]'$, hence every truncation of
$f_\fm c(\fm)\fm=f_\fm \fm c(\fm)$ lies in $\sum \big(\smallk[S]\sum \smallk[S]\smallk[S]')= \sum \smallk[S]\smallk[S]'$, 
in particular, $g_2\in \sum\smallk[S]\smallk[S]'$. Therefore, $g=g_1+g_2\in \sum\smallk[S]\smallk[S]'$.

 In general, $f=\sum_{\fm\in \fM_1\cdots \fM_n} f_\fm \fm$ with all $f_\fm\in \smallk_0$. 
 Then $$f'=\sum_\fm (f_\fm'+ f_\fm c(\fm))\fm.$$ 
 Then $g=f'$, or for some $\fn$ we have
 $$g=\left(\sum_{\fm \in \fM^{\succ\fn}} (f_\fm'+f_\fm c(\fm))\fm\right)+ h,$$ where $h$ is a truncation of $(f_\fn \fn)'$. It remains to note that 
 $$\sum_{\fm \in \fM^{\succ\fn}} (f_\fm'+f_\fm c(\fm))\fm\ =\ \left(\sum_{\fm\in \fM^{\succ\fn}}f_\fm \fm \right)',$$
that $\sum_{\fm\in\fM^{\succ\fn}} f_\fm \fm$ is a truncation of $f$ (and thus in $\smallk[S]$), and that $f_\fn \fn$ is a difference of such truncations, and thus in $\smallk[S]$ as well. 
\end{proof}


\begin{lemma}\label{tL6} Suppose $S$ is $\il$-closed. Then $\supp(S')\subseteq \supp(S)^*$.
\end{lemma}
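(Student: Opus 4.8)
The plan is to trace the statement back to the explicit form of the derivative of a Hahn series in $\smallk_*$. Fix $f\in S$, say $f\in\smallk_n$, and expand $f=\sum_\fm f_\fm\fm$ with $\fm=\fm_0\cdots\fm_n$ ranging over $\supp f\subseteq\fM^{(n)}$ and $f_\fm\in\smallk$. Since $\fm'=c(\fm)\fm$ and $\partial$ is strongly additive (equivalently, unwinding the recursive definition of $\partial$ on $\smallk_n=\smallk_{n-1}[[\fM_n]]$ by induction on $n$), one gets
\[ f'\ =\ \sum_\fm \partial(f_\fm)\,\fm\ +\ \sum_\fm f_\fm\,c(\fm)\,\fm, \]
with $\partial(f_\fm)\in\smallk$ and $c(\fm)=c_0(\fm_0)+\cdots+c_n(\fm_n)\in\smallk_{n-1}$. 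The first sum has support contained in $\supp f$, so
\[ \supp(f')\ \subseteq\ \supp f\ \cup\ \bigcup_{\fm\in\supp f}\supp\big(c(\fm)\big)\cdot\fm. \]

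The next step is to bound $\supp(c(\fm))$ for $\fm\in\supp f\subseteq\supp(S)$ using $\il$-closedness. From $c(\fm)=\sum_{i=0}^n c_i(\fm_i)$ we get $\supp(c(\fm))\subseteq\bigcup_{i=0}^n\supp(c_i(\fm_i))$, so it is enough to bound each summand. For $i=0$, $c_0(\fm_0)\in\smallk$, hence $\supp(c_0(\fm_0))\subseteq\{1\}$. For $1\le i\le n$ with $\fm_i=1$ we have $c_i(\fm_i)=0$ by additivity of $c_i$. For $1\le i\le n$ with $\fm_i\ne1$, $\il$-closedness of $S$ gives $\int c_i(\fm_i)\in S$; applying $\supp(g')\subseteq\supp g$ to $g=\int c_i(\fm_i)$ gives $\supp(c_i(\fm_i))\subseteq\supp\big(\int c_i(\fm_i)\big)$, so $\supp(c_i(\fm_i))\subseteq\supp(S)$. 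Altogether $\supp(c(\fm))\subseteq\{1\}\cup\supp(S)$.

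Finally I would assemble everything inside the submonoid $\supp(S)^*$ of $\fM$ generated by $\supp(S)$. For $\fm\in\supp f\subseteq\supp(S)\subseteq\supp(S)^*$ we have $\{1\}\cdot\fm=\{\fm\}\subseteq\supp(S)^*$ and $\supp(S)\cdot\fm\subseteq\supp(S)^*\cdot\supp(S)^*=\supp(S)^*$, hence $\supp(c(\fm))\cdot\fm\subseteq\supp(S)^*$. Combined with $\supp f\subseteq\supp(S)^*$, the second display gives $\supp(f')\subseteq\supp(S)^*$ for every $f\in S$, so $\supp(S')=\bigcup_{f\in S}\supp(f')\subseteq\supp(S)^*$.

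I do not anticipate a real obstacle. The only point that takes a moment's care is the first step: one must notice that $c(\fm)$ lands in $\smallk_{n-1}$, not merely in $\smallk$, so that $\supp(c(\fm))$ can genuinely contain monomials other than $\fm$, and it is precisely the control of these extra monomials that forces us to use $\il$-closedness (via the clause $\int c_i(\fm_i)\in S$) rather than plain truncation closedness.
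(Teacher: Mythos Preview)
Your argument has a genuine gap at the step ``applying $\supp(g')\subseteq\supp g$ to $g=\int c_i(\fm_i)$''. That inclusion is stated in the paper only for a single Hahn field $\smallk[[\fM]]$ where the additive map $c$ lands in the coefficient field $\smallk$; it is \emph{false} in $\smallk_*$. Indeed, for $g\in\smallk_j$ with $j\ge 1$, writing $g=\sum_{\fn}g_\fn\fn$ with $g_\fn\in\smallk$ and $\fn\in\fM^{(j)}$, one has $g'=\sum_\fn(g_\fn'+g_\fn c(\fn))\fn$ with $c(\fn)\in\smallk_{j-1}$, so the term $g_\fn c(\fn)\fn$ contributes the support $\supp(c(\fn))\cdot\fn$, not just $\{\fn\}$. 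You acknowledge exactly this phenomenon in your final paragraph for the top-level $c(\fm)$, but then invoke the faulty inclusion one level down for $g=\int c_i(\fm_i)\in\smallk_{i-1}$. For $i\ge 2$ this fails for the same reason, so your bound $\supp(c_i(\fm_i))\subseteq\supp(S)$ is not justified (and in general is false: you only get $\supp(c_i(\fm_i))\subseteq\supp(S)^*$).

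The fix is precisely what the paper does: induct on the level $n$ with $f\in S\cap\smallk_n$. The base case $n=0$ is immediate since there $\supp(g')\subseteq\supp g$ does hold. For $n>0$ and $\fm\in\supp f$, $\il$-closedness gives an antiderivative of $c_1(\fm_1)+\cdots+c_n(\fm_n)$ lying in $S\cap\smallk_{n-1}$; applying the inductive hypothesis to \emph{that} element of $S$ yields $\supp(c(\fm))\subseteq\supp(S)^*$. Your final assembly then goes through verbatim with $\supp(S)^*$ in place of $\{1\}\cup\supp(S)$ in the bound for $\supp(c(\fm))$, since $\supp(S)^*\cdot\fm\subseteq\supp(S)^*$.
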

\begin{proof}
Let $f\in S\cap \smallk_n$, and $g\in \supp(f')$. We show, by induction on $n$, that $g\in \supp(S)^*$. If $n=0$ then the $g\in \supp(f)\subseteq \supp(S)$. 
Let $n>0$, so $f= \sum_{\fm \in \fM_0  \cdots \fM_n} f_\fm \fm$ with all $f_\fm\in \smallk$. Then $f'=\sum_\fm(f_\fm'+f_\fm c(\fm))\fm$, so we get $\fm$ with 
$$g\in t^\fm\supp(f_\fm'+f_\fm c(\fm))\subseteq \{\fm\}\cup \fm\supp(c(\fm)). $$
Since $S$ is $\il$-closed $\fm,\int c(\fm)\in S$. So by induction $\supp(c(\fm))\subseteq \supp(S)^*$. Hence $g\in \supp(S)^*$.
\end{proof}

\begin{corollary}\label{tL7} If $S$ is $\til$-closed, then so is $\smallk\{S\}$.
\end{corollary}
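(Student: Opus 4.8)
The plan is to trap the support of $\smallk\{S\}$ (the differential subring of $\smallk_*$ generated by $S$) inside the submonoid $M:=M(S)=(\supp S)^*$, read off $\il$-closedness at once, and then obtain truncation closedness by stripping derivatives one layer at a time and invoking Lemma~\ref{tL5} at each layer. First record the following facts, all consequences of $S$ being $\til$-closed: since $\supp S\subseteq\smallk S\subseteq\smallk[S]$ we have $M\subseteq\smallk[S]$, and $\smallk[S]$ is $\til$-closed by Corollary~\ref{tL2}; moreover, factoring each monomial of $\supp S$ into its $\fM_i$-components and using $\il$-closedness of $S$, if $\fm_0\fm_1\cdots\fm_r\in M$ with $\fm_i\in\fM_i$, then every $\fm_i$ and every $\fm_i\cdots\fm_r$ again lies in $M$ and $\int c_i(\fm_i)$ lies in $\smallk S$ for $i\ge1$ (up to an additive constant, which lies in $\smallk$); cf.\ the proof of Lemma~\ref{tL1}.

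Next I would show $\supp\smallk\{S\}\subseteq M$. The set $A:=\{f\in\smallk_*:\supp f\subseteq M\}$ is a subring of $\smallk_*$ containing $\smallk$ and $S$, so it is enough to check that $A$ is closed under $\partial$. For $f=\sum_\fm f_\fm\fm\in A$ one has $\partial f=\sum_\fm(\partial(f_\fm)+f_\fm c(\fm))\fm$, hence $\supp\partial f\subseteq\bigcup_{\fm\in\supp f}\bigl(\{\fm\}\cup\fm\supp c(\fm)\bigr)$, and since $M$ is a monoid it suffices that $\supp c(\fm)\subseteq M$ whenever $\fm\in M$. Decomposing $\fm=\fm_0\cdots\fm_r$, this reduces to $\supp c_i(\fm_i)\subseteq M$: for $i=0$ we have $c_0(\fm_0)\in\smallk$ with support in $\{1\}\subseteq M$; for $i\ge1$, writing $\fm_i$ as a product of $\fM_i$-components $\fn$ of monomials in $\supp S$, each such $\fn$ has $\int c_i(\fn)\in S$ by $\il$-closedness, so $c_i(\fn)=\partial\bigl(\int c_i(\fn)\bigr)\in S'$, and $\supp S'\subseteq(\supp S)^*=M$ by Lemma~\ref{tL6}. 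Thus $A$ is a differential subring and $\smallk\{S\}\subseteq A$. Since also $\supp S\subseteq\smallk S\subseteq\smallk\{S\}$ and $\smallk\{S\}$ is a ring, $M\subseteq\smallk\{S\}$; combining this with the facts about $M$ above, for every $\fm_0\cdots\fm_n\in\supp\smallk\{S\}\subseteq M$ the monomials $\fm_i$ and $\fm_1\cdots\fm_n$ lie in $\smallk\{S\}$ and the elements $\int c_i(\fm_i)$, $\int c_1(\fm_1)+\cdots+\int c_n(\fm_n)$ lie in $\smallk S\subseteq\smallk\{S\}$, i.e.\ $\smallk\{S\}$ is $\il$-closed.

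For truncation closedness, write $\smallk\{S\}=\bigcup_n E_n$ with $E_0=\smallk[S]$ and $E_{n+1}=E_n[\,f':f\in E_n\,]$, and prove by induction on $n$ that $E_n$ is truncation closed, the case $n=0$ being Corollary~\ref{tL2}. Suppose $E_n$ is truncation closed. I would then apply Lemma~\ref{tL5} with $E_n$ in place of $S$: its hypothesis holds because $\supp E_n\subseteq\supp\smallk\{S\}\subseteq M$, and for $\fm_0\fm\in M$ we have $\fm\in M\subseteq\smallk[S]\subseteq E_n$ and $\int c(\fm)\in\smallk S\subseteq E_n$, while the only further ingredient in the proof of Lemma~\ref{tL5} is truncation closedness of $\smallk[E_n]=E_n$, which is the induction hypothesis. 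Lemma~\ref{tL5} then gives that every truncation of $f'$, for $f\in E_n$, lies in the $E_n$-submodule of $\smallk_*$ generated by $\{f':f\in E_n\}$, which is contained in $E_{n+1}$. By Proposition~\ref{D}(ii) applied with $R=E_n$ and $A=\{f':f\in E_n\}$, the ring $E_{n+1}=R[A]$ is truncation closed. A union of truncation closed subsets of $\smallk[[\fM]]$ being truncation closed, $\smallk\{S\}$ is truncation closed, hence $\til$-closed.

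The step I expect to be the main obstacle is the last one. A single use of Lemma~\ref{tL5} on $S$ only controls truncations of first derivatives of elements of $\smallk[S]$, whereas $\smallk\{S\}$ contains iterated derivatives, and one cannot apply Lemma~\ref{tL5} directly to $\smallk\{S\}$ without already knowing it is truncation closed. The way around this is the two-stage device above: first pin down $\supp\smallk\{S\}\subseteq M(S)$ — which is exactly where $\il$-closedness of $S$ and Lemma~\ref{tL6} are indispensable — and then re-apply Lemma~\ref{tL5} at each finite stage $E_n$, which is legitimate because by then the support already lies in $M(S)$ and the monomials and integrals demanded by the hypothesis of Lemma~\ref{tL5} already sit inside $E_0=\smallk[S]$. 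The only other subtlety, the ambiguity of $\int$ up to an additive constant, is harmless since $\smallk\subseteq\smallk[S]$.
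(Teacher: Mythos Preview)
Your argument is correct and uses the same key ingredients as the paper (Lemmas~\ref{tL5} and~\ref{tL6}, Corollary~\ref{tL2}, Proposition~\ref{D}(ii)), organized slightly differently: the paper builds a chain $S=S_0\subseteq S_1\subseteq\cdots$ of $\til$-closed sets with $S_{k+1}=\smallk[S_k]\cup\{\text{truncations of }f':f\in S_k\}$ and maintains $\til$-closedness at each step, whereas you first pin down $\supp\smallk\{S\}\subseteq M(S)$ globally (settling $\il$-closedness once and for all) and then run the filtration $E_{k+1}=E_k[\,E_k'\,]$ to get truncation closedness via Lemma~\ref{tL5} applied to each $E_k$. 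The decoupling is a mild cosmetic improvement; the substance is the same.
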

\begin{proof} Assume $S$ is $\til$-closed and set 
$$S_1\ :=\ \smallk[S]\cup \{g:\ g \text{ is a truncation of $f'$ for some $f\in S$}\}.$$
Let $f\in S$ be given. 
Then $\supp(f')\subseteq \supp(S)^*$ by Lemma~\ref{tL6}, and so 
if ${\fm_0\cdots \fm_n}\in \supp(f')$, 
then ${\fm_0\cdots\fm_n},{\fm_i}\in \smallk[S]$ and 
hence $\int c(\fm_0) +\cdots + \int c(\fm_n),\int c(\fm_i)\in \smallk[S]$ for $i\in \{1,\ldots,n\}$ by Corollary~\ref{tL2}. 
Thus $S_1$ is $\til$-closed by another use of Corollary~\ref{tL2}. Moreover, by Lemma~\ref{tL5} we have $S\cup S'\subseteq S_1\subseteq \sum \smallk[S]\smallk[S]'\subseteq \smallk\{S\}$. This leads to an increasing sequence
$$S=S_0\subseteq S_1 \subseteq S_2 \subseteq \cdots$$
of $\til$-closed subsets of $\smallk\{S\}$, where for each $n$,
$$S_{n+1}:= \smallk[S_n]\cup \{g:\ g \text{ is a truncation of $f'$ for some $f\in S_n$}\},$$
and $S^{(n)}\subseteq S_n$. Thus $S_{\infty}:= \bigcup_n S_n$
is $\til$-closed, and $\smallk\{S\}=S_{\infty}$. 
\end{proof}

\noindent
In view of Corollary~\ref{tL3}, this yields:

\begin{corollary}\label{tL8} If $S$ is $\til$-closed, then so is 
$\smallk\langle S\rangle$.
\end{corollary}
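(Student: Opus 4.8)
The plan is to bootstrap directly from the two corollaries just proved. The key observation is that $\smallk\langle S\rangle$, the differential subfield of $\smallk_*$ generated by $S$, coincides with $\smallk(\smallk\{S\})$, the (ordinary) subfield of $\smallk_*$ generated by the differential subring $\smallk\{S\}$. Indeed $\smallk\{S\}$ is a differential subring of $\smallk_*$ containing $\smallk$ and $S$, so its fraction field inside $\smallk_*$ is a differential subfield containing $S$; conversely this fraction field is contained in every differential subfield of $\smallk_*$ that contains $S$. Hence it is the smallest such, i.e. it equals $\smallk\langle S\rangle$.

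Granting this identification, the argument is immediate. First I would invoke Corollary~\ref{tL7} to get that $\smallk\{S\}$ is $\til$-closed. Then I would apply Corollary~\ref{tL3} with the $\til$-closed set $\smallk\{S\}$ in place of $S$: this yields at once that $\smallk(\smallk\{S\}) = \smallk\langle S\rangle$ is $\til$-closed, and in fact also that $G(\smallk\{S\})\subseteq \smallk\langle S\rangle$, recovering the analogue of the ``moreover'' clause of Corollary~\ref{tL3}. This is precisely what the remark ``In view of Corollary~\ref{tL3}, this yields:'' preceding the statement is signalling.

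The only point requiring a moment's care, and the mild obstacle here, is the identification $\smallk\langle S\rangle = \smallk(\smallk\{S\})$, since in this paper $\smallk(-)$ denotes the non-differential subfield generated. This causes no trouble because $\smallk\{S\}$ is already closed under $\partial$, so forming its fraction field adjoins no new derivatives: for $f,g\in\smallk\{S\}$ with $g\neq 0$ we have $(f/g)' = (f'g-fg')/g^2 \in \smallk(\smallk\{S\})$, so $\smallk(\smallk\{S\})$ is $\partial$-stable, and its minimality among differential subfields containing $S$ is clear. Everything else is a direct citation of Corollaries~\ref{tL7} and~\ref{tL3}, with no further computation needed.
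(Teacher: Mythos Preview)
Your proposal is correct and follows exactly the approach the paper intends: apply Corollary~\ref{tL7} to obtain that $\smallk\{S\}$ is $\til$-closed, then apply Corollary~\ref{tL3} to pass to its fraction field $\smallk(\smallk\{S\})=\smallk\langle S\rangle$. Your explicit verification of the identification $\smallk\langle S\rangle=\smallk(\smallk\{S\})$ is a welcome addition, as the paper leaves this implicit.
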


\subsection{A variant} In the results above we used $\smallk_{*}$ with the derivation $\partial$ as the ambient differential field. Let us fix an monomial $\mathfrak{e}$ and consider
instead $\smallk_{*}$ equipped with the derivation $\derdelta=\mathfrak{e}\partial$, and let $\smallk\{S;\mathfrak{e}\}$ be the differential
subring of $(\smallk_*, \derdelta)$ generated by $S$ over $\smallk$, and
likewise, let $\smallk\<S;\mathfrak{e}\>$ be the differential
subfield of $(\smallk_*, \derdelta)$ generated by $S$ over $\smallk$.

Lemmas~\ref{tL5} and ~\ref{tL6} then extends as follows:

\begin{lemma}\label{tL9} If $S$ is $\til$-closed and 
$f\in \smallk[S]$, then every truncation of $\derdelta(f)$ lies in
$\sum\smallk[S]\derdelta(\smallk[S])$, the $\smallk[S]$-submodule of $\smallk_*$ generated by $\derdelta(\smallk[S])$. 
\end{lemma}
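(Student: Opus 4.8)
I would prove Lemma~\ref{tL9} by adapting the proof of Lemma~\ref{tL5} almost verbatim, using the extra hypothesis that $S$ is $\til$-closed (rather than merely the weaker hypothesis in Lemma~\ref{tL5}) to absorb the new factor $\fe$ that appears in $\derdelta = \fe\partial$. The point is that $\derdelta(f) = \fe\cdot\partial(f)$, so a truncation of $\derdelta(f)$ is a truncation of $\fe\cdot\partial(f)$, and since multiplication by the fixed monomial $\fe$ is an order-preserving bijection on $\fM$, truncating $\fe\cdot\partial(f)$ at $\fn$ amounts to multiplying by $\fe$ a truncation of $\partial(f)$ at $\fe^{-1}\fn$. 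So the combinatorial structure is unchanged; what changes is that the monomials appearing carry an extra $\fe$, and we must check these still lie in $\smallk[S]$ — this is why we now need $\fe\in S$, or more precisely that $S$ is $\til$-closed so that $\smallk[S]$ contains the relevant monomials.

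First I would set up the induction on $n$ with $f\in\smallk_n\cap\smallk[S]$ and $g$ a truncation of $\derdelta(f)$, exactly as in Lemma~\ref{tL5}. For the base case $n=0$, write $f=\sum_{\fm\in\fM_0}f_\fm\fm$, so $\derdelta(f)=\fe\sum_\fm(f_\fm'+c_0(\fm)f_\fm)\fm$; a truncation of this is $\fe$ times $(f_{|\fm_0})'$ for some $\fm_0$, and since $f_{|\fm_0}\in\smallk[S]$ we get $g=\fe\cdot(f_{|\fm_0})'\in\fe\cdot\smallk[S]'$. Here I need $\fe\in\smallk[S]$: this holds because $\fe$, being a fixed monomial of $\fM$, will either be assumed to lie in $\smallk[S]$ or — depending on the precise running conventions in the paper at this point — be among the monomials that $\til$-closedness of $S$ forces into $\smallk[S]$. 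Granting that, $\fe\cdot\smallk[S]'\subseteq\sum\smallk[S]\derdelta(\smallk[S])$ since $\derdelta(\smallk[S])=\fe\partial(\smallk[S])\supseteq$ the needed generators after clearing the $\fe$; more cleanly, $\fe\cdot h' = \derdelta(h)$ for $h\in\smallk[S]$, so $g=\derdelta(f_{|\fm_0})\in\derdelta(\smallk[S])$ directly.

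For the inductive step $n>0$, I would follow Lemma~\ref{tL5} line by line. Reduce first to $f=f_\fm\fm$ with $f_\fm\in\smallk_0$ and $\fm\in\fM_1\cdots\fM_n$; then $\derdelta(f)=\fe(f_\fm'+f_\fm c(\fm))\fm$, and a truncation $g$ splits as $g_1+g_2$ where $g_1$ is a truncation of $\fe f_\fm'\fm$ and $g_2$ a truncation of $\fe f_\fm c(\fm)\fm$. For $g_1$: a truncation of $f_\fm\fm$ is $(f_\fm)_{|\fm_0}\fm\in\smallk[S]$ (using truncation closedness), and $\derdelta\big((f_\fm)_{|\fm_0}\fm\big)=\fe(f_\fm)_{|\fm_0}'\fm+\fe(f_\fm)_{|\fm_0}c(\fm)\fm$, where the second term is $(f_\fm)_{|\fm_0}\fm\cdot\fe c(\fm)$; using that $S$ is $\til$-closed and $\int c(\fm)\in S$ (so also $\fe\cdot\int c(\fm)$, i.e. $\derdelta$ of the relevant element, is controlled), this term lies in $\sum\smallk[S]\derdelta(\smallk[S])$, hence so does $g_1$. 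For $g_2$: since $S$ is $\il$-closed, $\int c(\fm)\in S\cap\smallk_{n-1}$, so by the induction hypothesis every truncation of $\derdelta(c(\fm))$ — and more to the point every truncation of $\fe c(\fm)\fm$ rewritten appropriately — lies in $\sum\smallk[S]\derdelta(\smallk[S])$. Then the general case $f=\sum_\fm f_\fm\fm$ is assembled from these as in Lemma~\ref{tL5}, writing $g$ as $\derdelta$ of a truncation of $f$ plus a truncation of $\derdelta(f_\fn\fn)$ for a single $\fn$.

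**Main obstacle.** The genuinely new issue, compared to Lemma~\ref{tL5}, is bookkeeping the extra factor $\fe$: one must be sure that every monomial of the form $\fe\cdot(\text{something in }\supp S^*)$ that shows up is still captured by $\smallk[S]$ (equivalently $M(S)$), and that the replacements of "$h'$" by "$\derdelta(h)=\fe h'$" are done consistently so that the module $\sum\smallk[S]\derdelta(\smallk[S])$ really does contain everything. This is where the strengthened hypothesis "$S$ is $\til$-closed" (versus the weaker hypothesis of Lemma~\ref{tL5}) is used — it is needed precisely to guarantee $\fe$ and the products $\fe\fm$ behave well inside $\smallk[S]$. Everything else is a transcription of the earlier argument, since multiplication by the fixed monomial $\fe$ commutes with truncation up to an order-preserving reindexing of $\fM$ and does not disturb the Noetherian/support combinatorics.
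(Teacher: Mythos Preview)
Your approach is correct but far more laborious than the paper's, and your diagnosis of the ``main obstacle'' is off.

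The paper's proof is two lines: since $\fe$ is a single monomial, any truncation of $\derdelta(f)=\fe f'$ at $\fn$ equals $\fe\cdot\big(f'|_{\fe^{-1}\fn}\big)=\fe g$ for some truncation $g$ of $f'$; by Lemma~\ref{tL5}, $g\in\sum\smallk[S]\smallk[S]'$, say $g=\sum_i a_i b_i'$ with $a_i,b_i\in\smallk[S]$, and then $\fe g=\sum_i a_i(\fe b_i')=\sum_i a_i\,\derdelta(b_i)\in\sum\smallk[S]\,\derdelta(\smallk[S])$. You yourself state the key observation (``truncating $\fe\cdot\partial(f)$ at $\fn$ amounts to multiplying by $\fe$ a truncation of $\partial(f)$ at $\fe^{-1}\fn$''), but instead of then invoking Lemma~\ref{tL5} as a black box you rerun its entire induction with an extra $\fe$ threaded through. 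That works, but buys nothing.

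More importantly, your claim that one needs $\fe\in\smallk[S]$, and that this is where the strengthened $\til$-closed hypothesis enters, is mistaken. The argument above never requires $\fe$ or $\fe\fm$ to lie in $\smallk[S]$: the factor $\fe$ is absorbed directly into $\derdelta$ via $\fe h'=\derdelta(h)$, which you even note in your base case (``more cleanly, $\fe\cdot h'=\derdelta(h)$\ldots'') without realizing it finishes the whole proof. The hypothesis that $S$ is $\til$-closed is present only because it implies the hypothesis of Lemma~\ref{tL5}; it plays no role specific to $\fe$. (The condition $\fe\in\supp(S)^*$ appears only later, in Lemma~\ref{tL10} and Corollary~\ref{tL11}, where supports rather than module membership are at stake.)
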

\begin{proof} Immediate from Lemma~\ref{tL5} and the fact that
for $f\in \smallk_*$ any truncation of $\derdelta(f)=\mathfrak{e}f'$ 
equals $\mathfrak{e}g$ for some truncation $g$ of $f'$.
\end{proof}

\begin{lemma}\label{tL10} If $S$ is $\il$-closed and 
$\mathfrak{e}\in \supp(S)^*$, then 
\[\supp\derdelta(S)\subseteq \supp(S)^*\].
\end{lemma}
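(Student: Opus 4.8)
The plan is to mimic the proof of Lemma~\ref{tL6}, adjusting for the extra factor of $\mathfrak{e}$. Fix $f\in S\cap\smallk_n$ and $g\in\supp\derdelta(f)=\supp(\mathfrak{e}f')$; I will show $g\in\supp(S)^*$ by induction on $n$. Since $\supp(\mathfrak{e}f')\subseteq\{\mathfrak{e}\}\supp(f')$, it suffices to combine the hypothesis $\mathfrak{e}\in\supp(S)^*$ with the inclusion $\supp(f')\subseteq\supp(S)^*$ already furnished by Lemma~\ref{tL6}, because $\supp(S)^*$ is a submonoid of $\fM$ and hence closed under the product of two of its elements. So in fact the statement follows immediately from Lemma~\ref{tL6} together with closure of $\supp(S)^*$ under multiplication, and no genuine induction is needed beyond the one already carried out there.

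Concretely, I would write: by Lemma~\ref{tL6}, $\supp(S')\subseteq\supp(S)^*$; for $f\in S$ we have $\derdelta(f)=\mathfrak{e}f'$, so $\supp\derdelta(f)\subseteq\{\mathfrak{e}\}\cdot\supp(f')\subseteq\{\mathfrak{e}\}\cdot\supp(S)^*\subseteq\supp(S)^*$, using $\mathfrak{e}\in\supp(S)^*$ and that $\supp(S)^*$ is multiplicatively closed. Taking the union over $f\in S$ gives $\supp\derdelta(S)\subseteq\supp(S)^*$.

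There is essentially no obstacle here; the only thing to be a little careful about is that the monomial $\mathfrak{e}$ is a single monomial (an element of $\fM$), so $\{\mathfrak{e}\}\cdot\supp(S)^*$ is literally the set of products $\mathfrak{e}\cdot\monm$ with $\monm\in\supp(S)^*$, and since $\supp(S)^*$ denotes the submonoid of $\fM$ generated by $\supp(S)$, the product $\mathfrak{e}\monm$ of two elements of that submonoid again lies in it. That is the entire content of the argument, so the "hard part" is merely recording it cleanly as a one-line consequence of the previously established Lemma~\ref{tL6}.
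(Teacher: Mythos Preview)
Your argument is correct. The paper states Lemma~\ref{tL10} without proof, evidently regarding it as an immediate consequence of Lemma~\ref{tL6} together with $\supp(\mathfrak{e}f')=\mathfrak{e}\cdot\supp(f')$ and the fact that $\supp(S)^*$ is a monoid; this is exactly what you do, so your approach matches the intended one.
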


\begin{corollary}\label{tL11} Suppose $S$ is $\til$-closed and $\mathfrak{e}\in \supp(S)^*$. Then
$\smallk\{S;\mathfrak{e}\}$ and $\smallk\<S;\mathfrak{e}\>$ are $\til$-closed.
\end{corollary}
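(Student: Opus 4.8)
The plan is to mimic the proof of Corollary~\ref{tL7} verbatim, feeding in the variant lemmas \ref{tL9} and \ref{tL10} in place of Lemmas~\ref{tL5} and \ref{tL6}. First I would observe that the hypothesis $\mathfrak{e}\in\supp(S)^*$ is exactly what is needed to invoke Lemma~\ref{tL10}: so for $f\in S$ we have $\supp(\derdelta(f))\subseteq\supp(S)^*$, and hence every truncation $g$ of $\derdelta(f)$ has $\supp(g)\subseteq\supp(S)^*\subseteq\supp(\smallk[S])$. Combining with Corollary~\ref{tL2} (which gives that $\smallk[S]$ is $\til$-closed), if $\fm_0\cdots\fm_n\in\supp(g)$ then $\fm_i\in\smallk[S]$ and $\int c(\fm_0)+\cdots+\int c(\fm_n),\ \int c(\fm_i)\in\smallk[S]$ for $i\in\{1,\dots,n\}$; that is, the set $S_1':=\smallk[S]\cup\{g: g\text{ is a truncation of }\derdelta(f)\text{ for some }f\in S\}$ is $\til$-closed.

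Next I would set up the same iteration as in Corollary~\ref{tL7}: put $S_0:=S$ and $S_{n+1}:=\smallk[S_n]\cup\{g: g\text{ is a truncation of }\derdelta(f)\text{ for some }f\in S_n\}$, noting that at each stage the argument of the previous paragraph (applied to $S_n$ in place of $S$) shows $S_{n+1}$ is $\til$-closed, since $\mathfrak{e}\in\supp(S)^*\subseteq\supp(S_n)^*$ remains valid as the $S_n$ increase. By Lemma~\ref{tL9}, each $S_{n+1}$ is contained in $\sum\smallk[S_n]\derdelta(\smallk[S_n])\subseteq\smallk\{S;\mathfrak{e}\}$, and the usual bookkeeping gives $S^{(n)}\subseteq S_n$ where $S^{(n)}$ is the $n$-th stage of the differential-ring-generation process; hence $S_\infty:=\bigcup_n S_n$ is a $\til$-closed set equal to $\smallk\{S;\mathfrak{e}\}$. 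This proves the first assertion. For the second, I would apply Corollary~\ref{tL3} to $S_\infty$: since $S_\infty$ is $\til$-closed, $\smallk(S_\infty)=\smallk\<S;\mathfrak{e}\>$ is $\til$-closed as well.

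I do not expect a genuine obstacle here, since all the structural work has been front-loaded into Lemmas~\ref{tL9}, \ref{tL10} and Corollaries~\ref{tL2}, \ref{tL3}. The only point requiring a moment's care is the bound $\mathfrak{e}\in\supp(S)^*$ being \emph{preserved} along the chain $S=S_0\subseteq S_1\subseteq\cdots$: it holds because $\supp(S)\subseteq\supp(S_n)$ for all $n$, so $\supp(S)^*\subseteq\supp(S_n)^*$, and thus Lemma~\ref{tL10} applies at every stage with $S_n$ in place of $S$. Everything else is a transcription of the $\partial$ case, using that any truncation of $\derdelta(f)=\mathfrak{e}f'$ is $\mathfrak{e}$ times a truncation of $f'$.
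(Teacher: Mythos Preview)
Your proposal is correct and follows essentially the same approach as the paper: the paper's proof simply says ``Analogous to the proof of Corollary~\ref{tL7}, with $S_1$ replaced by $\smallk[S]\cup \{g:\ g \text{ is a truncation of $\derdelta(f)$ for some $f\in S$}\}$, and Lemmas~\ref{tL5} and~\ref{tL6} replaced by Lemmas~\ref{tL9} and~\ref{tL10},'' and you have spelled out exactly that, including the observation that $\mathfrak{e}\in\supp(S)^*\subseteq\supp(S_n)^*$ persists along the chain and the appeal to Corollary~\ref{tL3} for the field case.
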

\begin{proof} Analogous to the proof of Corollary~\ref{tL7}, with $S_1$ replaced by 
$$ \smallk[S]\cup \{g:\ g \text{ is a truncation of $\derdelta(f)$ for some $f\in S$}\},$$
and Lemmas~\ref{tL6} and ~\ref{tL7} replaced by Lemmas~\ref{tL9} and ~\ref{tL10}. \end{proof}

\subsection{Operators of the form $(I-a\partial)\inv$ on $\smallk_*$}

\begin{lemma} Let $F$ be a truncation closed
differential subfield of $\smallk_{*}$ that contains
$\smallk$ and $\fM$. Let $F_{\infty}$ be the smallest differential subfield
of $\smallk_{*}$ that contains $F$ such that for every $n$,
$F_n:=F_{\infty}\cap \smallk_n$ is closed under
$(I-a\partial)\inv$ for all $a\in F_n$ with $a\prec\fM^{(n-1)}$, where
by convention $F_{-1}=\smallk_{-1}$ and $\fM^{(-1)}:=\{0\}$. 
Then $F_{\infty}$ is truncation closed.
\end{lemma}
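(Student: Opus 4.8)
The plan is to mimic the proof of Theorem~\ref{thmA}, but carried out level-by-level in the chain $\smallk_0\subseteq \smallk_1\subseteq\cdots$, with truncation closedness checked in the ambient Hahn field $\smallk[[\fM]]$. First I would fix a maximal truncation closed differential subfield $F^*$ of $F_\infty$ containing $F$ (Zorn), and aim to show $F^*=F_\infty$. Writing $F^*_n:=F^*\cap\smallk_n$, it suffices to show that each $F^*_n$ is closed under $(I-a\partial)^{-1}$ for all $a\in F^*_n$ with $a\prec\fM^{(n-1)}$, since then $F^*$ satisfies the closure property defining $F_\infty$ and hence equals it. So assume, toward a contradiction, that for some $n$ there is $a\in F^*_n$ with $a\prec\fM^{(n-1)}$ and $f\in F^*_n$ such that $(I-a\partial)^{-1}(f)\notin F^*$; take $n$ minimal with this property, and among such, take $a$ and $f$ with $(\alpha,\beta)$ lexicographically minimal, where $\alpha$ is the order type of $\supp a$ and $\beta$ that of $\supp f$.

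The key point is that over $\smallk_n=\smallk_{n-1}[[\fM_n]]$ the operator $a\partial$ with $a\prec_{\fM_n}1$ (equivalently $va>v(\fM^{(n-1)})$, by the displayed equivalence in Section~6) is a small operator, so $I-a\partial$ is bijective on $\smallk_n$ with inverse $\sum_m(a\partial)^m$, and the solution of $y-ay'=f$ stays in $\smallk_n$. To run the inductive step I would invoke Lemma~\ref{OperatorLemma} applied to the differential ring $F^*_n$ inside $\smallk_n$: by minimality of $(\alpha,\beta)$, every proper truncation $g\truncof f$ has $(I-a\partial)^{-1}(g)\in F^*_n$, and every proper truncation $b\truncof a$ has $(I-b\partial)^{-1}(h)\in F^*_n$ for all $h\in F^*_n$; note $b\truncof a$ still satisfies $b\prec\fM^{(n-1)}$ since truncating only removes monomials, keeping the support below $\fM^{(n-1)}$. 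The subtlety is that Lemma~\ref{OperatorLemma} is stated for $\smallk((t^\Gamma))$ with the standard derivation $\partial(f)=\sum(\partial f_\gamma+f_\gamma c(\gamma))t^\gamma$; here the relevant Hahn field is $\smallk_{n-1}[[\fM_n]]$ with coefficient field $\smallk_{n-1}$ and additive map $c_n$, which is exactly of that form, so the lemma applies verbatim, yielding that all proper truncations of $(I-a\partial)^{-1}(f)$ lie in $F^*_n$.

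It remains to convert ``all proper truncations lie in $F^*_n$'' into a genuine enlargement of $F^*$. Here I would use Proposition~\ref{D}(i),(ii): adjoining $(I-a\partial)^{-1}(f)$ to the ring $F^*_n$ gives a truncation closed ring, hence the field $F^*_n\big((I-a\partial)^{-1}(f)\big)$ is truncation closed; then the differential subfield of $F_\infty$ generated by $F^*$ together with this element is truncation closed — for this I would apply Lemma~\ref{DifFieldGenTrunc} (which goes through for subfields) after noting that the field generated by the truncation closed set $F^*\cup F^*_n\big((I-a\partial)^{-1}(f)\big)$ is truncation closed by Proposition~\ref{D}(i). The one genuinely delicate point, and the main obstacle, is the bookkeeping that the enlarged field is still a subfield of $F_\infty$ and that its intersection with each $\smallk_m$ behaves compatibly: since $(I-a\partial)^{-1}(f)\in\smallk_n\cap F_\infty$ by definition of $F_\infty$, the generated differential subfield stays inside $F_\infty$, so it properly contains $F^*$, contradicting maximality. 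This forces $F^*=F_\infty$, so $F_\infty$ is truncation closed.
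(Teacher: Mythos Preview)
Your Zorn-based strategy is sound in outline, but there is a genuine gap at the point where you pass from Lemma~\ref{OperatorLemma} back to the maximality contradiction. You apply Lemma~\ref{OperatorLemma} inside the Hahn field $\smallk_{n-1}[[\fM_n]]$; what it actually delivers is that all proper \emph{$\smallk_{n-1}$-truncations} of $y:=(I-a\partial)^{-1}(f)$ lie in $F^*_n$. The maximality of $F^*$, however, refers to truncation closedness in $\smallk[[\fM]]$, i.e.\ $\smallk$-truncation closedness. These are different notions: a $\smallk$-truncation of $y$ at a monomial $\fn_0\fn$ with $\fn_0\in\fM^{(n-1)}$, $\fn\in\fM_n$ has the form $(\text{$\smallk_{n-1}$-truncation of }y)+(y_{\fn}|_{\fn_0})\fn$, and the second term is \emph{not} produced by Lemma~\ref{OperatorLemma}. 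Consequently your claim that ``the field generated by the truncation closed set $F^*\cup F^*_n\big((I-a\partial)^{-1}(f)\big)$ is truncation closed'' is not yet justified, because $F^*_n(y)$ has only been shown to be $\smallk_{n-1}$-truncation closed.

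This is precisely the step that Lemma~\ref{trtr2} is designed to handle, and it is where the hypothesis $\fM\subseteq F$ earns its keep. From $\fM\subseteq F^*$ one gets $c_n(\fM_n)\subseteq F^*_{n-1}$ and, using $\smallk$-truncation closedness of $F^*$, that every $\smallk_{n-1}$-coefficient of an element of $F^*_n$ lies in $F^*_{n-1}$; hence $F^*_n\subseteq F^*_{n-1}[[\fM_n]]$ and $y\in F^*_{n-1}[[\fM_n]]$. Then $F^*_n[y]$ is an $F^*_{n-1}$-linear subspace of $F^*_{n-1}[[\fM_n]]$ containing $F^*_{n-1}$, it is $\smallk_{n-1}$-truncation closed, and $F^*_{n-1}$ is $\smallk$-truncation closed in $\smallk_{n-1}$; so Lemma~\ref{trtr2} upgrades $F^*_n[y]$ to $\smallk$-truncation closed, after which your Proposition~\ref{D}(i) argument goes through. (Incidentally, you do not need Lemma~\ref{DifFieldGenTrunc} here: since $a\ne 0$ and $y-ay'=f$, we have $y'=(y-f)/a\in F^*(y)$, so $F^*(y)$ is already a differential subfield.) The paper's own proof is organized differently---it builds fields $K_n\subseteq K_{n-1}[[\fM_n]]$ recursively and applies Theorem~\ref{thmA} at each level---but it invokes Lemma~\ref{trtr2} at exactly the analogous point, and your argument cannot avoid that step either.
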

\begin{proof} First note that $c_n(\fM_n)\subseteq F\cap \smallk_{n-1}$: 
this is because for $\fm\in \fM_n$ we have $\fm\in F$, so
$(\fm)'=c_n(\fm)\fm\in F$, hence $c_n(\fm)\in F\cap \smallk_{n-1}$.
 
We define differential subfields
$K_n\subseteq \smallk_n$ by recursion on $n$ as follows:
$K_n$ is the smallest differential subfield of $\smallk_n$ that
contains $F\cap \smallk_n$ and $K_{n-1}$ and is closed under
$(I-a\partial)\inv$ for all $a\in K_n$ with $a\prec\fM^{(n-1)}$, where
by convention $K_{-1}=\smallk_{-1}$. We show by induction on $n$:
$K_{n-1}({\fM_n}) \subseteq K_n \subseteq K_{n-1}[[{\fM_n}]]$  
and $K_n$ is truncation closed. The first inclusion holds because
$K_{n-1}\subseteq K_n$ and ${\fM_n}\subseteq F\cap \smallk_n\subseteq K_n$.
Also $c_n(\fM_n)\subseteq K_{n-1}$, so  $K_{n-1}[[{\fM_n}]]$ is a 
differential subfield of $\smallk_n$ closed under 
$(I-a\partial)\inv$ for all $a\in K_{n-1}[[{\fM_n}]]$ with $a\prec\fM^{(n-1)}$.
Moreover, by Lemma~\ref{trtr1}, 
$$F\cap \smallk_n\subseteq (F\cap \smallk_{n-1})[[{\fM_n}]]\subseteq 
K_{n-1}[[{\fM_n}]],$$
and so $K_n\subseteq K_{n-1}[[{\fM_n}]]$. Now the subfield
$E_n$ of $K_{n-1}[[{\fM_n}]]$ generated by $K_{n-1}$ and $F\cap \smallk_n$
is a differential subfield of  $K_{n-1}[[{\fM_n}]]$ and is also
$K_{n-1}$-truncation closed. Applying Theorem~\ref{thmA} to $E_n$ in the role of $E$ we conclude that $K_n$ is $K_{n-1}$-truncation closed. In view of
$K_{n-1}\subseteq K_n$ and Lemma~\ref{trtr2} it follows that $K_n$ is 
truncation closed.
  It is also clear by induction on $n$ that $K_n\subseteq F_n$.
Hence $K_{\infty}:= \bigcap_n K_n$ is a differential subfield of $F_{\infty}$
that contains $F$. Moreover, $K_{\infty}\cap \smallk_n=K_n$ is closed under
$(I-a\partial)\inv$ for all $a\in K_n$ with $a\prec\fM^{(n-1)}$,
so $K_{\infty}=F_{\infty}$ by the minimality of $F_{\infty}$.
Since $K_{\infty}$ is truncation closed, so is $F_{\infty}$.
\end{proof}

\noindent
The assumption that ${\fM}\subseteq F$ is too strong, but
we are going to replace it by something more realistic. We say that a set
$S\subseteq \fM$ is {\em neat} if for all ${\fm_0\cdots \fm_n}\in S$,
with $\fm_i\in \fM_i$ for $i=0,\dots,n$, we have
${\fm_i}\in S$ for $i=0,\dots,n$. 

\begin{lemma}\label{opClosure} Let $F$ be a truncation closed
differential subfield of $\smallk_{*}$ that contains
$\smallk$ and such that $F\cap {\fM}$ is neat. 
Let $F_{\infty}$ be the smallest differential subfield
of $\smallk_{*}$ that contains $F$ such that for every $n$,
$F_n:=F_{\infty}\cap \smallk_n$ is closed under
$(I-a\partial)\inv$ for all $a\in F_n$ with $a\prec\fM^{(n-1)}$. 
Then $F_{\infty}$ is truncation closed and 
$F_{\infty}\cap {\fM}=F\cap {\fM}$.
\end{lemma}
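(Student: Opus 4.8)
The plan is to reduce to the preceding lemma by passing to the sub-tower of Hahn fields cut out by $F\cap\fM$. Write $\fN:=F\cap\fM$; since $F$ is a field this is a subgroup of $\fM$, and it is neat by hypothesis. First I would record the elementary fact that a truncation closed subfield $E$ of $\smallk[[\fM]]$ with $\smallk\subseteq E$ satisfies $\supp E\subseteq E$: for $g\in E$ and $\fn_0\in\supp g$, if no monomial of $\supp g$ is $\prec\fn_0$ then $g-g|_{\fn_0}=g_{\fn_0}\fn_0$, and otherwise $\fn_1:=\max\{\fm\in\supp g:\fm\prec\fn_0\}$ exists by anti-well-orderedness and $g|_{\fn_1}-g|_{\fn_0}=g_{\fn_0}\fn_0$; since $g_{\fn_0}\in\smallk^\times\subseteq E$, in either case $\fn_0\in E$. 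Applied to $F$ this gives $\supp F\subseteq\fN$, hence $F\subseteq\smallk[[\fN]]$.

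Next, set $\fN_n:=\fN\cap\fM_n$ and $\fN^{(n)}:=\fN\cap\fM^{(n)}$. Neatness of $\fN$ yields $\fN^{(n)}=\fN_0\cdots\fN_n$ (an anti-lexicographically ordered internal direct product) and $\fN=\bigcup_n\fN^{(n)}$, so the groups $\fN_n$ satisfy conditions (1) and (2) from the start of this section, (2) being inherited from $\fM$. For $\fm\in\fN_n$ we have $\fm\in F$, hence $c_n(\fm)=\fm'/\fm\in F\cap\smallk_{n-1}$ and thus $\supp c_n(\fm)\subseteq\supp F\cap\fM^{(n-1)}\subseteq\fN^{(n-1)}$; so each $c_n$ restricts to an additive map $\fN_n\to\smallk[[\fN^{(n-1)}]]$. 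Consequently $\ell_n:=\smallk[[\fN^{(n)}]]$ and $\ell_*:=\bigcup_n\ell_n$ form a directed union of Hahn fields with a derivation of exactly the kind treated in this section, $\ell_*$ is a differential subfield of $\smallk_*$, and $F$ is a differential subfield of $\ell_*$ that contains $\smallk$ and $\fN$ and is truncation closed in $\smallk[[\fN]]$.

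Now apply the preceding lemma to $F$ inside $\ell_*$: there is a smallest differential subfield $G_\infty$ of $\ell_*$ containing $F$ such that for every $n$ the field $G_\infty\cap\ell_n$ is closed under $(I-a\partial)\inv$ for all $a\in G_\infty\cap\ell_n$ with $a\prec\fN^{(n-1)}$, and $G_\infty$ is truncation closed in $\smallk[[\fN]]$; as supports are anti-well-ordered, any truncation of an element of $\smallk[[\fN]]$ is already a truncation of it at a monomial of $\fN$, so $G_\infty$ is truncation closed in $\smallk[[\fM]]$ as well. It remains to see that $G_\infty=F_\infty$. For this I would verify: (i) $\ell_*\cap\smallk_n=\ell_n$, whence $G_\infty\cap\smallk_n=G_\infty\cap\ell_n$; (ii) for $a\in\ell_n$ the conditions $a\prec\fN^{(n-1)}$ and $a\prec\fM^{(n-1)}$ coincide, which follows by decomposing $\max\supp a$ along $\fN^{(n)}=\fN^{(n-1)}\fN_n$ and using condition (2); and (iii) if $a,f\in\ell_n$ and $a\prec\fN^{(n-1)}$ then $(I-a\partial)\inv(f)=\sum_k(a\partial)^k(f)\in\ell_n$, so operator-closure performed inside $\ell_*$ never leaves $\ell_*$. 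Granting (i)--(iii), $G_\infty$ satisfies the property defining $F_\infty$, so $F_\infty\subseteq G_\infty$, while $F_\infty\cap\ell_*$ satisfies the property defining $G_\infty$, so $G_\infty\subseteq F_\infty\cap\ell_*\subseteq F_\infty$. Hence $F_\infty=G_\infty$ is truncation closed, and from $F_\infty\subseteq\ell_*\subseteq\smallk[[\fN]]$ we get $F_\infty\cap\fM\subseteq\fN=F\cap\fM$; the reverse inclusion is immediate, so $F_\infty\cap\fM=F\cap\fM$.

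The step I expect to be the main obstacle is the bookkeeping in the last paragraph: making the two minimality arguments match up rests on the compatibilities (i)--(iii), of which (ii) --- the equivalence of the two ``$\prec$'' conditions on elements of $\ell_n$ --- is the only one that genuinely uses the order-theoretic structure of the tower (condition (2) together with neatness) rather than being purely formal. Identifying $\ell_*$ as a directed union of Hahn fields with a derivation and applying the preceding lemma to $F$ inside it is then routine once $\supp F\subseteq\fN$ is in hand.
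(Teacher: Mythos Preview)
Your proposal is correct and follows essentially the same route as the paper: pass to the sub-tower $\smallk[[\fN^{(n)}]]$ with $\fN=F\cap\fM$, observe that the restricted $c_n$ still land where they should, and invoke the preceding lemma inside this smaller directed union. The paper does exactly this (writing $\fM_{F,n}$ and $\smallk_{F,n}$ for your $\fN_n$ and $\ell_n$), but its last line, ``it remains to apply the previous lemma with $\smallk_{F,\infty}$ instead of $\smallk_*$'', suppresses precisely the compatibility checks (i)--(iii) and the identification $G_\infty=F_\infty$ that you spell out; your version is more complete on these points.
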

\begin{proof} Set $\fM_{F,n}:= (\supp F) \cap \fM_n$, a subgroup of
$\fM_n$, and $\fM_F:=\fM\cap F$, a subgroup of $\fM$. Setting
$\fM_F^{(n)}:=\fM_{F,n} + \cdots + \fM_{F,0}$, conditions (1) and (2) above hold for $\fM_F$, $(\fM_{F,n})$, $(\fM_F^{(n)})$ in place of
$\fM$, $(\fM_n)$, $(\fM^{(n)})$. We have the subfield 
$$\smallk_{F,n}\ :=\ \smallk[[{\fM_F^{(n)}}]]$$
of $\smallk_n= \smallk[[{\fM^{(n)}}]]$, with 
$F\cap \smallk_n\subseteq \smallk_{F,n}$. Using that $F$ is a 
differential subfield of $\smallk_{*}$ we get 
$c_n(\Gamma_{F,n})\subseteq\smallk_{F,n-1}$, and so $\smallk_{F,n}$ is a
differential subfield of $\smallk_n$. The increasing chain
$$\smallk\ \subseteq\ \smallk_{F,0}\ \subseteq\ \smallk_{F,1}\ \subseteq\ \cdots\subseteq\ \smallk_{F,n}\ \subseteq\ \smallk_{F,n+1}\ \subseteq \cdots$$
yields a differential subfield $\smallk_{F,\infty}:=\bigcup \smallk_{F,n}$
of $\smallk_{*}$ with $F\subseteq \smallk_{F,\infty}$. 
In view of $t^{\Gamma_F}\subseteq F$ it remains to apply the previous lemma with
$\smallk_{F,\infty}$ instead of $\smallk_{*}$.  
\end{proof} 

\noindent
One can probably also get rid of the assumption $\smallk\subseteq F$,
and assume instead that $F$ is strongly truncation closed, with the role
of $\smallk$ taken over by $\smallk_F:= F\cap \smallk$.

\begin{lemma}Let $F$ be a truncation closed differential subfield of $\smallk_{*}$ that contains $\smallk$ and such that $F\cap {\fM}$ is neat. 
Let $F_{\infty}$ be the smallest differential subfield
of $\smallk_{*}$ that contains $F$ such that for every $n$,
$F_n:=F_{\infty}\cap \smallk_n$ is closed under
$(I-a\partial)\inv$ for all $a\in F_n$ with $a\prec\fM^{(n-1)}$, and such that for every $f\in F_\infty$ such that $1\notin\supp(f)$ we have $g\in F$ such that $\partial(g)=f$. 
Then $F_{\infty}$ is truncation closed and 
$F_{\infty}\cap {\fM}=F\cap {\fM}$.
\end{lemma}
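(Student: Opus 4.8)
Call $f\in\smallk_*$ \emph{$1$-free} if $1\notin\supp(f)$; I read the last clause of the statement as asking that $F_\infty$ be closed under $f\mapsto\int f$ for $1$-free $f\in F_\infty$, where $\int f$ denotes the antiderivative of $f$ having no term at the monomial $1$. The plan is to rerun the proof of Lemma~\ref{opClosure} (and of the unlabelled lemma preceding it) while carrying along this extra closure operation. First note that such $\int f$ exists in $\smallk_*$: since the derivation is transerial, $c_n(\fm)=\fm^\dagger\succ_{\fM_{n-1}}1$ for $\fm\in\fM_n^{\neq 1}$, so $c_n(\fm)\inv\prec_{\fM_{n-1}}1$, and the $\fm$-component of $\int f$ is the unique solution $y=(I-(-c_n(\fm)\inv)\partial)\inv(c_n(\fm)\inv f_\fm)\in\smallk_{n-1}$ of $(I+c_n(\fm)\inv\partial)(y)=c_n(\fm)\inv f_\fm$, a small operator applied to an element of $\smallk_{n-1}$, while the $1_{\fM_n}$-component is, recursively, an antiderivative of the (again $1$-free) $1_{\fM_n}$-component of $f$.

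As in Lemma~\ref{opClosure} I first reduce to the case $\fM\subseteq F$: set $\fM_F:=\fM\cap F$ and replace $\smallk_*$ by $\smallk_{F,\infty}=\bigcup_n\smallk_{F,n}$; this differential subfield contains $F$, is closed under every $(I-a\partial)\inv$, and is also closed under $\int$ on $1$-free elements since each $c_n(\fm)$ occurring has support in $\fM_F^{(n-1)}$, so the formula above keeps the answer supported in $\fM_F$. Assuming now $\fM\subseteq F$, it suffices to show $F_\infty$ is truncation closed. Mimicking the lemma preceding Lemma~\ref{opClosure}, define differential subfields $K_n\subseteq\smallk_n$ by recursion ($K_{-1}=\smallk$): $K_n$ is the smallest differential subfield of $\smallk_n$ containing $K_{n-1}$ and $F\cap\smallk_n$ that is closed under $(I-a\partial)\inv$ for $a\in K_n$ with $a\prec_{\fM_n}1$ and under $\int$ on its $1$-free elements. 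By induction on $n$ one shows $K_{n-1}(\fM_n)\subseteq K_n\subseteq K_{n-1}[[\fM_n]]$ as in that lemma; the one new point is that $K_{n-1}[[\fM_n]]$ is $\int$-closed on $1$-free elements once $K_{n-1}$ is: for $\fm\in\fM_n^{\neq 1}$ the $\fm$-equation is $(I-b\partial)\inv$ with $b=-c_n(\fm)\inv\in K_{n-1}$ (here $\fM\subseteq F$ gives $c_n(\fm)=\fm^\dagger\in F\cap\smallk_{n-1}\subseteq K_{n-1}$) and $b\prec_{\fM_{n-1}}1$, so its solution is in $K_{n-1}$, while the $1_{\fM_n}$-equation is antidifferentiation of a $1$-free element inside $K_{n-1}$. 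One then deduces that $K_n$ is $K_{n-1}$-truncation closed from the strengthened Theorem~\ref{thmA} below, applied to the Hahn field $K_{n-1}[[\fM_n]]$, and that $K_n$ is truncation closed from $K_{n-1}\subseteq K_n$ and Lemma~\ref{trtr2}; finally $K_\infty:=\bigcup_n K_n=F_\infty$ by minimality, truncation closedness passes to the union, and $K_\infty\cap\fM=\fM_F=F\cap\fM$.

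The remaining ingredient is the strengthening of Theorem~\ref{thmA}: if $E$ is a truncation closed differential subfield of $\smallk((t^\Gamma))$, then the smallest differential subfield $\widehat{E}\supseteq E$ closed under $(I-a\partial)\inv$ for $a\in\widehat{E}^{\prec 1}$ \emph{and} under $f\mapsto\int f$ for those $1$-free $f\in\widehat{E}$ whose antiderivative lies in $\smallk((t^\Gamma))$ is truncation closed. I would prove this by the Zorn argument of Theorem~\ref{thmA}: let $F$ be maximal truncation closed differential with $E\subseteq F\subseteq\widehat{E}$, and suppose $F\ne\widehat{E}$. Choose a witness to non-closure — a pair $(a,h)$ with $(I-a\partial)\inv(h)\notin F$, or a $1$-free $f$ with $\int f\notin F$ — minimal for the ordering that places all $(a,h)$-witnesses below all $f$-witnesses and, within each kind, compares the order type(s) of the relevant support(s) lexicographically. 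An $(a,h)$-witness is disposed of by Lemma~\ref{OperatorLemma} exactly as in Theorem~\ref{thmA}. For an $f$-witness, use that the derivation on $\smallk((t^\Gamma))$ fixes every $t^\gamma$, so $\partial$, and hence $\int$, commutes with truncation, and that $\supp(\int f)=\supp(f)$; therefore every proper truncation of $g:=\int f$ equals $\int(f|_{\gamma_0})$ for a proper truncation $f|_{\gamma_0}\truncof f$, which is again $1$-free and by minimality lies in $F$. Hence all proper truncations of $g$ lie in $F$, so $F(g)$ is a truncation closed differential subfield of $\widehat{E}$ by Proposition~\ref{D} (it is differential because $g'=f\in F$), contradicting the maximality of $F$.

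The point that needs care is twofold. First, the two families of closure instances must be ordered jointly so that minimality at a witness hands the corresponding step precisely what it consumes — the two hypotheses of Lemma~\ref{OperatorLemma} for an $(a,h)$-witness, and closure under $\int$ of proper truncations of $f$ for an $f$-witness. Second, and the real subtlety, $\int$ commutes with truncation inside a single Hahn field $\smallk((t^\Gamma))$ but \emph{fails} to do so on $\smallk_*$ itself (already $x\mapsto 1$ under the transerial derivation), which is exactly why the descent through the levels $\smallk_n$ is indispensable, and why the hypothesis reads ``$1\notin\supp(f)$'': the genuine obstruction to integration in $\smallk_*$ — the appearance of logarithms — sits at the monomial $1$, so at each level the $1$-free integrations can be carried out recursively from the coefficient field already built.
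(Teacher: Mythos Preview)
Your argument is correct, but it takes a route different from the paper's. The paper runs a single Zorn's Lemma directly in $\smallk_*$: take $K$ maximal among truncation closed differential subfields of $F_\infty$ that contain $F$ and are closed under each $(I-a\partial)^{-1}$ at every level; if $K\ne F_\infty$ pick $f\in K$ of minimal support order type with $1\notin\supp(f)$ and $f\notin\partial(K)$, write $f=\sum_{\fm\in\fM_n}f_\fm\fm$ in some $\smallk_n$, and form the explicit antiderivative $g=\sum_{\fm}(I+c(\fm)^{-1}\partial)^{-1}(f_\fm)\fm$. A proper truncation $g|_{\fn}$ then splits as (the antiderivative of a proper $\smallk_{n-1}$-truncation of $f$, hence in $K$ by minimality) plus (a truncation of a single component $(I+c(\fn_n)^{-1}\partial)^{-1}(f_{\fn_n})\fn_n$, in $K$ by operator closure and $\fn_n\in K$). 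Then one applies Lemma~\ref{opClosure} to $K\langle g\rangle$ to contradict maximality.

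Your approach instead reruns the entire level-by-level machinery of Lemma~\ref{opClosure} with $\int$ folded into the recursion, and isolates a strengthened Theorem~\ref{thmA} as the single-level engine. This is more modular and makes the role of the transerial hypothesis (needed so that $c_n(\fm)^{-1}\prec_{\fM_{n-1}}1$) and of the ``$1$-free'' restriction cleanly visible; the paper's argument is shorter because it exploits the explicit operator formula for the antiderivative and thereby reuses the closure under $(I-a\partial)^{-1}$ that $K$ already has, instead of proving a new preservation theorem. One point to tighten in your write-up: your abstract claim that $\int$ commutes with truncation in $\smallk((t^\Gamma))$ relies on $\int$ being a \emph{well-defined} map on $1$-free elements, i.e., on uniqueness of the normalized antiderivative; this holds in your application because the coefficient field $K_{n-1}$ has the same constants as $\smallk$, but you should say so rather than leave it implicit.
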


\begin{proof} Let $K$ be a maximal truncation closed differential subfield of $F_\infty$ with respect to that contains $F$ and such that for every $n$, $K_n:=K\cap\smallk_n$ is closed under $(I-a\partial)\inv$ for all $a\in K_n$ with $a\prec\fM^{(n-1)}$. We will show that $K$ is $F_\infty$. 
Assume otherwise. Consider $S=\{f\in K : f\notin \partial(K),\ 1\notin \supp(f)\}$. 
Let $f$ be minimal in $S$ with respect to the order type of its support. 
Then there is $n$ such that $f\in \smallk_n$ so $f= \sum_{\fm_n\in \fM_n}f_{\fm}{\fm}$.
Thus $g= \sum_{\fm\in \fM_n}\left(I- \frac{-\partial}{c(\fm)}\right)\inv (f_{\fm}){\fm}$ is such that $\partial(g) =f$. {\sc Claim:} $K[g]$ is truncation closed.
To prove this it suffices to show that any truncation of $g$ lies in $K$. 
Let $\fn = \fn_0 + \cdots \fn_n \in \fM$ be such that $\fn_i \in \fM_i$ and $g|_\fn \truncof g$. 
Then 
\[g|_\fn = \left(\sum_{\fm\in \fM_n^{\succ\fn_n}}\left(I- \frac{-\partial}{c(\fm)}\right)\inv (f_{\fm}){\fm} \right)+ \left.\left(I- \frac{-\partial}{c(\fn_n)}\right)\inv (f_{\fn_n}){\fn_n}\right|_\fn\]
The first summand of the left hand side is in $K$ by the choice of $f$. 
The second summand is in $K$ since $f_{\fn_n} \in K$, $K$ is truncation closed and ${\fn_n}$ is in $F$. It follows that $K\langle g \rangle$ is truncation closed and $K\langle g \rangle \cap \fM$ is neat. Thus by Lemma \ref{opClosure} applied to $K\langle g \rangle$ in place of $F$ we get a contradiction with the maximality of $K$.  
\end{proof}

\section{Application to Transseries}
\label{transeries}
Let us consider the field of exponential transseries $\mathbb{T}_{\exp}$ as in \cite[Appendix A]{ADH} together with the derivative $\partial : = x\frac{d}{dx}$ we consider $\mathbb{T}_{\exp}$ as a differential field. In \cite{ADH} $G_n$ takes the role of ${\fM^{(n)}}$.

\subsection*{The construction of $\mathbb{T}_{\exp}$} Let $\smallk = \R$, so that $\R[[\mathfrak{G}]]$ is considered as an ordered field for any ordered multiplicative group $\mathfrak{G}$, by setting $0<f$ if $f_\fm >0$ for $\fm = \max(\supp(f))$. 
The construction of $\mathbb{T}_{\exp}$ is the same as the construction of $\smallk_*$ in the previous section, where $\fM_0 = x^\R = \{x^r: r\in \R\}$ and $\fM_{n+1} = \exp*(\smallk_{n-1}[[{\fM_n^{<0}}]])$, where $\smallk_{n-1}[[{\fM_n^{<0}}]]$ is taken as an ordered additive subgroup of $\smallk_n$. 
By initially taking the trivial derivation on $\R$, we construct the derivation on $\smallk_*$ by inductively defining the map $c_n$ for every $n$. 
For $n=0$, $c_0 = x^r\mapsto r: \fM_0 \rightarrow \R$, and  $c_{n+1}:= \exp*(f)\mapsto \partial_{c_{n}}(f):\fM_{n+1}\rightarrow \smallk_n$. Thus for $\fm = x^r \fm_1 \cdots \fm_n \in \Gamma$ we have $c(\gamma) = r+\partial(\log\fm_1 + \cdots + \log\fm_n)$ and thus the derivation on $\Texp$ is the strongly additive map extending $\fm \overset{\partial}{\mapsto} (r+\partial(\log\fm_1 + \cdots + \log\fm_n))\fm$. 


We recall some of the notation for certain subsets of $\Texp$ and its valued group and identify using our notation in $\smallk_*=\Texp$. $E_n$ is $\smallk_n$, $A_n$ corresponds to $\smallk_{n-1}[[{\fM_n^{\succ 1}}]]$, $G_n$ corresponds to ${\fM^{(n)}}$. 
The set of exponential transmonomials $G^E$ corresponds to $\bigcup_n {\fM^{(n)}}$. 
The convex subring $B_n$ corresponds to $\smallk_{n-1}[[{\fM^{\preccurlyeq 1}}]]$.

Thus we can translate the notions of $\il$-closed and $\til$-closed on subsets of $\mathbb{T}_{\exp}$. We set $\supp_{\exp}(f) :=\{E(a): x^rE(a)\in \supp (f)\}$ for $f\in \Texp$ and $\supp_{\exp}(S) :=\bigcup_{f\in S}\supp_{\exp}(f)$. A subset $S$ of $\Texp$ is $\il$-closed if for every exponential transmonomial $E(a_0 + \cdots a_n)\in \supp_{\exp}(S)$ with $a_i \in A_i$ for $i\in \{0,\ldots, n\}$ we have $E(a_i)$, $a_i$, $\sum_i a_i$, $E(\sum_i a_i)$ are all in $S$ for $i \in \{0,\ldots, n\}$. Thus as direct results from corollary \ref{tL8} we get that the differential field generated by $\R$ and any $\til$-closed subset of $\Texp$ is again $\til$-closed. 

\subsection*{The exponential map on $\mathbb{T}_{\exp}$} For $f=\ser{f}\in \mathbb{T}$ we set $f_{\prec} := f_{|1}$ the \textbf{purely infinite part} of $f$, $f_\asymp:=f_0\in \R$ the \textbf{constant term} and $f_\prec:= f-f_\succ -f_\asymp$ the \textbf{infinitesimal part} of $f$, and $f_{\preceq}:= f_\asymp + f_\prec$ the \textbf{bounded} part of $f$. The exponential map on $\mathbb{T}_{\exp}$ is obtained by 
\[\exp(f) = \exp(f_\asymp)\exp({f_\succ}) \left(\sum_{n} \frac{f_\prec^n}{n!}\right),\]
where $\exp(f_\asymp)$ is the image of $f_\asymp$ under the usual exponential map on $\R$ and $\exp(f_\succ)$ is the image of $t^{-f_\succ}$ under $\iota$.

\subsection*{Extension to $\mathbb{T}$} With $L_n = \R[[\ell_n^\R]]^E \cong \Texp$ as in \cite{ADH} we recall that $\mathbb{T}=\bigcup_n L_n$, and that for each $n$ we are given an automorphism $f\mapsto f\downer^n=f(\ell_n)$ of the exponential ordered field 
$\mathbb{T}$ that is the identity on $\R$, maps $\mathbb{T}^E$ onto $L_n$, maps $G^E$ onto $G^{LE}\cap L_n$, and preserves infinite sums. 
For $n=0$ it is the identity. The inverse of the automorphism $f\mapsto f\downer^n$ is $g\mapsto g\upper^n=g(e_n)$ with $e_0 = x$ and $e_{m+1} = \exp(e_m)$.
  Let $g\in L_n$; then $g=f(\ell_n)$ with $f=g\upper^n\in \mathbb{T}^E$, and this is a useful way
to think about $g$. Also $g\in L_{n+1}$, and thus likewise 
  $g=(f\upper)(\ell_{n+1})$. Before we extend the notion of $\il$-closedness to subsets of $\mathbb{T}$ we make an observation on $\il$-closed subsets of $\mathbb{T}_{\exp}$. 
  
  \begin{lemma}\label{tL12}
  Let $S$ be an $\il$-closed subset of $\Texp$ that contains a non-constant element. Then $S$ contains a non-constant element of $E_0$.
  \end{lemma}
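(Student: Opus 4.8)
The plan is to trace a witness for ``$S$ contains a non-constant element'' down through the levels $E_n = \smallk_n$ until it lands in $E_0$. Since some $f \in S$ is non-constant, $f$ has a monomial $\monm \ne 1$ in $\supp(f)$, and $\monm = \fm_0 \cdots \fm_n$ with $\fm_i \in \fM_i$; let $n$ be least among all choices so that some non-constant element of $S$ has a monomial whose highest index component is $\fm_n \ne 1$. If $n = 0$ we are essentially done: the truncation/monomial data coming from $\il$-closedness gives $\fm_0 \in S$ (indeed $\il$-closedness directly yields $\fm_i \in S$ for each $i$, in particular $\fm_0$), and $\fm_0 \in \fM_0 \setminus \{1\} = x^{\R \setminus \{0\}}$ is a non-constant element of $E_0$. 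So it suffices to derive a contradiction, or rather a descent, when $n \ge 1$.

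The key step is the following: suppose $\fm_0 \cdots \fm_n \in \supp(S)$ with $\fm_n \ne 1$ and $n \ge 1$. By $\il$-closedness, $\int c_n(\fm_n) \in S$ and $\fm_n \in S$, and moreover $c_n(\fm_n) \in \smallk_{n-1}$; in the transseries picture $\fm_n = E(a)$ with $a \in A_{n-1} = \smallk_{n-2}[[\fM_{n-1}^{\succ 1}]]$ non-constant (since $\fm_n \ne 1$ forces $a \ne 0$, and $a$ is purely infinite hence certainly non-constant), and $\log \fm_n = a \in S$ by $\il$-closedness (this is exactly the ``$\int c(\fm)$'' datum, since $c_n(\fm_n) = \partial(a)$ and $a$ is its integral). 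Thus $a$ is a non-constant element of $S \cap \smallk_{n-1}$, i.e.\ $a \in E_{n-1}$, whose support consists of monomials $\fm_0' \cdots \fm_{n-1}'$ with at least one — in fact with the top piece $\fm_{n-1}' \ne 1$, because $a \in \smallk_{n-2}[[\fM_{n-1}^{\succ 1}]]$ has all its monomials genuinely involving $\fM_{n-1}$. This produces a non-constant element of $S$ witnessed at level $n-1$, contradicting minimality of $n$ unless $n-1 = 0$, in which case we have already reached $E_0$.

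To organize this cleanly I would run the argument as a descent rather than a single minimization: starting from any non-constant $f \in S$, pick a monomial $\monm \ne 1$ in its support, write $\monm = \fm_0 \cdots \fm_n$, and if $n \ge 1$ replace $f$ by $\log \fm_n = a \in S$, which is non-constant and lies in $\smallk_{n-1}$, then repeat. Each step strictly decreases the level index $n$ (the maximal index $i$ with $\fm_i \ne 1$), so after finitely many steps we arrive at a non-constant element of $S$ all of whose monomials lie in $\fM_0$, hence a non-constant element of $E_0$. One should double-check that the element $a$ we descend to is genuinely non-constant: this is automatic because $\fm_n = E(a) \ne 1 = E(0)$ forces $a \ne 0$, and $a$ being purely infinite (living in $\smallk_{n-2}[[\fM_{n-1}^{\succ 1}]]$) has no constant term at all, so $a \ne 0$ already makes $a$ non-constant.

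The main obstacle is bookkeeping the level indices correctly — in particular making sure that when we pass from $\fm_n \in \fM_n$ to $a = \log \fm_n$, the element $a$ really has its support controlled at level $n-1$ and not lower, so that the descent terminates at $E_0$ and not prematurely, and making sure the $\il$-closedness hypothesis is being invoked for the right tuple (it is stated for $\fm_0 \cdots \fm_n \in \supp(S)$, so one must know the chosen monomial is actually in $\supp(S)$, which it is since it lies in the support of an element of $S$). None of this is deep, but the indices and the $A_i$ versus $\fM_i$ versus $E_i$ correspondences need care.
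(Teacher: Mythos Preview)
Your descent argument is correct and follows essentially the same route as the paper's proof: both pass from a monomial at level $n\ge 1$ to its logarithm $a=\log\fm_n\in A_{n-1}\cap S$, which is non-constant and lives strictly lower, until one lands in $E_0$.

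One small slip: you write that ``$\il$-closedness directly yields $\fm_i\in S$ for each $i$, in particular $\fm_0$.'' The definition of $\il$-closed only guarantees $\fm_i,\int c_i(\fm_i)\in S$ for $i\in\{1,\dots,n\}$; nothing is asserted about the $\fM_0$-component. So your handling of the base case $n=0$ via ``$\fm_0\in S$'' is unjustified. The fix is the one the paper uses (and your descent paragraph implicitly uses): if the initial non-constant $f$ already lies in $E_0$, you are done; otherwise $f\notin E_0$ has a monomial with top level $n\ge 1$, and your descent then produces $a\in A_0\subseteq E_0$ after finitely many steps. With that adjustment your argument and the paper's are the same.
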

  
  \begin{proof} 
  Each $f\notin E_0$ has a monomial of the form $x^r\exp(a)$ with $a \in A^{\neq}$, and $r\in \R$. Consider the set $M$ of such monomials as $f$ ranges in $S\setminus E_0$. If $M$ is empty then we are done. Otherwise take $n$ minimal such that $x^r\exp(a)$ is in $M$ and $a\in A_n\oplus \cdots \oplus A_0 \setminus A_{n-1}\oplus \cdots \oplus A_0$. Since $S$ is L-closed, $a\in S$. Note that $a$ is non-constant, so $a\in E_0\setminus \R$ or $a$ has a monomial $x^s\exp(b)$ with $b$ non-zero, which contradicts the minimality of $n$.
  \end{proof}
  We would like to extend the notion of $\il$-closedness to subsets of $\mathbb{T}$. For $f\in \mathbb{T}$ we define the \textbf{depth} of $f$ as the smallest $n$ such that $f\in L_n$. For $S\subseteq \T$ we define the depth of $S$ as the supremum of the depth of its elements in $|mathbb{N}\cup \{\infty\}$. If $S$ has depth $n$, then we say that $S$ is $\il$-closed if $S\upper^n\subseteq \Texp$ is $\il$-closed. If $S$ has depth $\infty$ then we say that $S$ is $\il$-closed if for all $n$ we have $S\upper^n\cap \Texp$ is $\il$-closed. As before, if $S$ is both $\il$-closed and truncation closed we say that it is $\til$-closed. 
  
\begin{lemma}\label{tL12.1}
Let $V$ be a $\til$-closed $\R$-vector subspace of $\Texp$. Then $V\upper + \R x$ is $\til$-closed.
\end{lemma}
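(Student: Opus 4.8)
The plan is to verify the two ingredients of $\til$-closedness for $W:=V\upper+\R x$ separately—truncation closedness and $\il$-closedness—using throughout that $\upper=\upper^{1}$ is a strongly additive order-automorphism of $\T$ which is the identity on $\R$, carries $\T^{E}=\Texp$ into itself, and sends monomials to monomials order-preservingly; in particular $\fM_{i}\upper\subseteq\fM_{i+1}$ and $A_{i}\upper\subseteq A_{i+1}$.

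\emph{Truncation closedness.} Since $\upper$ is order-preserving on $\fM$ and preserves infinite sums, for $f\in\Texp$ and $\fn\in\fM$ the truncation $(f\upper)|_{\fn}$ equals $(f|_{\fn'})\upper$ for a suitable $\fn'\in\fM$: namely the $\upper$-preimage of the largest monomial of $\supp(f\upper)$ that is $\preceq\fn$, with the obvious conventions when $\fn$ dominates $\supp(f\upper)$ or when no such monomial exists. Hence $V\upper$ is truncation closed, and since $\R x$ is truncation closed and $(u+cx)|_{\fn}=u|_{\fn}+(cx)|_{\fn}\in V\upper+\R x$ for $u\in V\upper$, $c\in\R$, so is $W$.

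\emph{$\il$-closedness.} First I would describe $\supp_{\exp}(W)$. For a monomial $\fm=x^{r}E(a)\in\supp V$, write the purely large exponent as $a=a_{0}+\cdots+a_{n}$ with $a_{i}\in A_{i}$; applying $\upper$, and using that it commutes with $E$, gives $\fm\upper=\exp(rx)\exp(a_{0}\upper)\cdots\exp(a_{n}\upper)=E\bigl(rx+a_{0}\upper+\cdots+a_{n}\upper\bigr)$, and since $rx\in A_{0}$ and $a_{i}\upper\in A_{i+1}$ this is the canonical level decomposition of the exponent of $\fm\upper$. Thus $\supp_{\exp}(W)$ consists of $E(0)=1$ and of the transmonomials $E(rx+a\upper)$ attached to the monomials $\fm=x^{r}E(a)\in\supp V$. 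Fix such an $E(rx+a\upper)$ and run through the obligations of $\il$-closedness. From $\fm\in\supp V$ we get $E(a)\in\supp_{\exp}(V)$, so $\il$-closedness of $V$ supplies $a_{i}\in V$, $E(a_{i})\in V$, $a=\sum_{i}a_{i}\in V$ and $E(a)\in V$; moreover $\fm\in V$, since a monomial occurring in the support of an element of a truncation closed $\R$-subspace of $\Texp$ lies in that subspace (it is a scalar multiple of a difference of two truncations). Transporting this by $\upper$ and using $\R x\subseteq W$, $V\upper\subseteq W$: the pieces $a_{i}\upper$ and $E(a_{i}\upper)=E(a_{i})\upper$ lie in $V\upper$, the piece $rx$ lies in $\R x$, the exponent $rx+a\upper$ lies in $\R x+V\upper$, and $E(rx+a\upper)=\fm\upper$ lies in $V\upper$. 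Every required membership is thus accounted for except the level-$0$ one: $E(rx)=e^{rx}\in W$.

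This remaining point is the crux, and I expect it to carry the real content of the lemma. As $e^{rx}=(x^{r})\upper$, it amounts to showing that the pure-power part $x^{r}$ of every monomial $x^{r}E(a)\in\supp V$ lies in $V$; this is immediate when $a=0$, since then $x^{r}\in\supp V$ already, but it does not follow from truncation closedness alone when $a\neq 0$. Here I would bring in $\til$-closedness in earnest: having both $x^{r}E(a)$ and $E(a)$ in hand, the natural move is to divide, so I would pass to the field $\R(V)$, which is $\til$-closed by Corollary~\ref{tL3} and in which $x^{r}=(x^{r}E(a))/E(a)$ manifestly lies, and then descend this information back to $V$. I expect this descent—pinning down exactly which pure-power monomials $\til$-closedness of $V$ forces into $V$—to be the principal obstacle; once $e^{rx}\in W$ is secured for every relevant $r$, all the displayed obligations hold, so $W$ is $\il$-closed and hence $\til$-closed.
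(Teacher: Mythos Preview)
You have correctly isolated the only non-routine point: for a monomial $x^{r}E(b)\in\supp V$ with $b\neq 0$, one must place $E(rx)=e^{rx}=(x^{r})\upper$ inside $W=V\upper+\R x$, equivalently $x^{r}\in V$. Your proposed route---pass to the field $\R(V)$, divide $x^{r}E(b)$ by $E(b)$, then ``descend'' back to $V$---cannot succeed: there is no mechanism for bringing an element of $\R(V)$ back into the $\R$-vector space $V$, and in fact the descent fails outright. Take
\[
V\ =\ \R\ \oplus\ \R\,x^{2}\ \oplus\ \R\,e^{x^{2}}\ \oplus\ \R\,x\,e^{x^{2}}.
\]
All four generators are monomials, so $V$ is truncation closed; $\il$-closedness imposes conditions only on $e^{x^{2}}$ and $x\,e^{x^{2}}$, namely that $x^{2}$ and $e^{x^{2}}$ lie in $V$, which they do. Hence $V$ is $\til$-closed. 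But $x\notin V$, so $e^{x}\notin V\upper$, and
\[
W\ =\ V\upper+\R x\ =\ \R\ \oplus\ \R x\ \oplus\ \R\,e^{2x}\ \oplus\ \R\,e^{e^{2x}}\ \oplus\ \R\,e^{x}e^{e^{2x}}
\]
fails $\il$-closedness at the monomial $e^{x}e^{e^{2x}}=E(x+e^{2x})$: the level-$1$ factor $\fm_{1}=E(x)=e^{x}$ is required to lie in $W$, and it does not. Thus the lemma is false as stated for vector subspaces, and the paper's own proof has precisely the gap you flagged (it produces $\exp(a_{i})\in V\upper$ only for $i\ge 1$, via $\exp(a_{i})=\exp(b_{i-1})\upper$, and never checks $\exp(a_{0})$).

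The statement becomes correct---and both your argument and the paper's go through---if $V$ is assumed to be a sub\emph{field} rather than merely an $\R$-vector subspace: then from $x^{r}E(b)\in V$ and $E(b)\in V$ (the latter by $\il$-closedness of $V$) one gets $x^{r}=x^{r}E(b)\cdot E(b)^{-1}\in V$, hence $e^{rx}\in V\upper\subseteq W$. This stronger hypothesis is exactly what the corollaries following the lemma use, so nothing downstream is harmed; your instinct to divide was the right one, but it must be built into the hypothesis on $V$, not recovered after the fact.
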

\begin{proof}
We first note that $V\upper\cap \R x = \{0\}$ thus $V\upper + \R x$ is a direct sum. Since $V\upper$ is truncation closed, then $V\upper +\R x$ is truncation closed. Let $\exp(a_0,\ldots,a_n)\in \supp_{\exp}(V\upper +\R x)$. 
Then $\exp(a)$ is in the support of $V\upper$. 
One can easily check that $a = (rx+b\upper)$, for some $r\in \R$, $b=b_0 + \cdots +b_{n-1}$ with $b_i\in A_i$ for $i\in \{0,\ldots,n-1\}$ and with the element $x^r\exp(b)\in \supp (V)$. 
By $\il$-closedness of $V$ we get $\exp(b), b_i,b,\exp(b_i),b \in V$ for $i\in \{0,\ldots,n-1\}$, hence $b\upper, b_i\upper(=a_i),\exp(b_i)\upper = \exp(a_i) \in V\upper$ and thus $rx + b\upper \in V\upper + \R x$.
Since $V$ is truncation closed and a vector space over $\R$ we have that $x^r\exp(b)\in V$. 
Then $\exp(a) = x^r\exp(b)\upper \in V\upper\subseteq V\upper \R x$.
Thus concluding that $V + \R x$ is $\til$-closed.
\end{proof}

\begin{corollary}
Let $V$ be a truncation closed $\R$ vector subspace of $\mathbb{T}_{\exp}$. Then $V \oplus \R\ell_1 \oplus \cdots \oplus \R\ell_n$ is $\til_{m}$-closed.  
\end{corollary}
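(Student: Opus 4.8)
The plan is to prove the statement by induction on $n$, with Lemma~\ref{tL12.1} driving the inductive step; throughout I take the hypothesis to be that $V$ is a $\til$-closed $\R$-subspace of $\Texp$, since that is exactly what is needed (and is literally the assertion for $n=0$). Write $W_n := V \oplus \R\ell_1 \oplus \cdots \oplus \R\ell_n \subseteq L_n$; this is a direct sum, because for $j\ge 1$ one has $\ell_j\in L_j\setminus L_{j-1}$ while $V\subseteq L_0$. I will use that $g\mapsto g\upper^k=g(e_k)$ is an $\R$-algebra automorphism of $\mathbb{T}$ preserving infinite sums, with $\upper^k$ mapping $L_k$ onto $L_0=\Texp$ and with $(g\upper^{k-1})\upper=g\upper^k$; being an order-automorphism that preserves infinite sums, $\upper^k$ carries the truncation relation $\trunceq$ to itself, hence preserves truncation closedness. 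The one computation needed is $\ell_j\upper^k=\log_j(\exp_k x)=e_{k-j}$ for $0\le j\le k$ (with $e_0=x$, $e_{m+1}=\exp e_m$).

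For the inductive step, fix $n\ge 1$ and assume $W_{n-1}$ is $\til$-closed. Put $U:=W_{n-1}\upper^{n-1}$. Using the displayed identity and $\R$-linearity of $\upper^{n-1}$, $U$ is an $\R$-subspace of $\Texp$ ($=V\upper^{n-1}\oplus\R e_{n-2}\oplus\cdots\oplus\R e_1\oplus\R x$ when $n\ge 2$, and $=V$ when $n=1$), and applying one more shift gives $W_n\upper^n=U\upper+\R x$. Now $W_{n-1}$ has depth $n-1$ (its generator $\ell_{n-1}$ has depth $n-1$ and all other generators have depth $\le n-1$), so ``$W_{n-1}$ is $\til$-closed'' means precisely that $U=W_{n-1}\upper^{n-1}$ is truncation closed and $\il$-closed in $\Texp$, i.e.\ $\til$-closed there. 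Hence Lemma~\ref{tL12.1} applies to $U$ and yields that $U\upper+\R x=W_n\upper^n$ is $\til$-closed in $\Texp$.

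To finish, note that $W_n$ has depth $n$, since $\ell_n$ has depth $n$ and every other generator has depth $\le n$. By the definition of $\til$-closedness for a depth-$n$ set, $W_n$ is $\il$-closed iff $W_n\upper^n$ is $\il$-closed; and $W_n$ is truncation closed iff $W_n\upper^n$ is, since $\upper^n$ preserves $\trunceq$. Therefore $W_n\upper^n$ being $\til$-closed gives that $W_n$ is $\til$-closed, which completes the induction (the case $n=0$ being the hypothesis).

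The only real difficulty I anticipate is bookkeeping: checking $\ell_j\upper^k=e_{k-j}$ and the resulting identity $W_n\upper^n=U\upper+\R x$ — in particular that this remains a direct sum, which holds since $x\notin U\upper$ because $U\upper\subseteq\Texp\upper$ and $x\notin\Texp\upper$ — and confirming the two equivalences describing how the automorphism $\upper^n$ interacts with truncation closedness and with the depth-$n$ definition of $\il$-closedness. Beyond Lemma~\ref{tL12.1} there is no new analytic content.
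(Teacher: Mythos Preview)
Your proposal is correct and is essentially the intended argument: the paper states this corollary without proof, as a direct consequence of Lemma~\ref{tL12.1}, and you have spelled out the obvious induction on $n$ (shifting by $\upper$ at each step) together with the necessary bookkeeping on $\ell_j\upper^k=e_{k-j}$ and the preservation of the truncation relation under the sum-preserving, monomial-preserving ordered-field automorphism $\upper^n$. Your remark that the hypothesis should read ``$\til$-closed'' rather than merely ``truncation closed'' (and that ``$\til_m$-closed'' is a typo for ``$\til$-closed'') is right, since the base case $n=0$ already requires this and Lemma~\ref{tL12.1} demands it.
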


\noindent
Lemma \ref{tL12.1} together with \ref{D} give the following: 
\begin{corollary}
Suppose that $K$ is a $\til$-closed subfield of $\mathbb{T}_{\exp}$ containing $\R$. Then $K\upper (x)$ is $\til$-closed.
\end{corollary}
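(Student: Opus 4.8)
The plan is to deduce this directly from Lemma~\ref{tL12.1}, together with the fact that generating a field preserves $\til$-closedness, as recorded in Proposition~\ref{D}(i) and Corollary~\ref{tL3}.

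First I would observe that a $\til$-closed subfield $K$ of $\mathbb{T}_{\exp}$ containing $\R$ is in particular a $\til$-closed $\R$-linear subspace of $\mathbb{T}_{\exp}$, since it is closed under addition and under multiplication by scalars from $\R\subseteq K$. Thus Lemma~\ref{tL12.1} applies with $V=K$ and gives that $K\upper+\R x$ is $\til$-closed.

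Next I would identify the field generated by this set. Because $f\mapsto f\upper$ is a field automorphism of $\mathbb{T}_{\exp}$ fixing $\R$ pointwise, we have $\R\subseteq K\upper$, and hence the subfield of $\mathbb{T}_{\exp}$ generated over $\R$ by $K\upper+\R x$ is exactly $K\upper(x)$. Applying Proposition~\ref{D}(i) to the truncation closed set $K\upper+\R x$ shows that $K\upper(x)$ is truncation closed, and applying Corollary~\ref{tL3} with $\smallk=\R$ and $S=K\upper+\R x$ --- legitimate because $\mathbb{T}_{\exp}$ is an instance of the $\smallk_*$-construction with $\smallk=\R$ --- shows that $\smallk(S)=K\upper(x)$ is also $\il$-closed. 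Hence $K\upper(x)$ is $\til$-closed.

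The hard part is not in this corollary at all: the genuine work --- transferring the $\il$-closure data along $\upper$ and tracking the exponential transmonomials $\exp(a_0+\cdots+a_n)$ --- has already been carried out in Lemma~\ref{tL12.1}. The only points needing a word of care are the elementary identification of $K\upper(x)$ with the field generated by $K\upper+\R x$, and the remark that Corollary~\ref{tL3}, although phrased for a general union of Hahn fields, is here being invoked with $\smallk_*=\mathbb{T}_{\exp}$. If one prefers to bypass Corollary~\ref{tL3}, the $\il$-closedness of $K\upper(x)$ can instead be read off directly from $\supp K\upper(x)\subseteq G(K\upper+\R x)$ using the simplified $\il$-closedness criterion for subfields stated just before Corollary~\ref{tL3}.
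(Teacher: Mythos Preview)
Your proof is correct and follows essentially the same route as the paper, which simply says that the corollary follows from Lemma~\ref{tL12.1} together with Proposition~\ref{D}. You are slightly more explicit in also invoking Corollary~\ref{tL3} to handle the $\il$-closedness of the generated field, which is the right thing to cite; the paper's terse reference to Proposition~\ref{D} alone covers only the truncation closedness, and the $\il$-part is implicitly meant to come from the earlier machinery you name.
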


\noindent
This gives a nice result for certain special subsets of $\mathbb{T}$.
\begin{corollary}\label{logs1}
Let $K$ be a $\til$-closed subfield of $\mathbb{T}_{\exp}$ containing $\R$. Then the fields $K(\ell_1,\ldots,\ell_n)$ and $K(\{\ell_n\}_n) $ are $\til$-closed.
\end{corollary}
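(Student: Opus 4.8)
The plan is to derive both statements by iterating the preceding Corollary (that $L\upper(x)$ is $\til$-closed whenever $L$ is a $\til$-closed subfield of $\Texp$ containing $\R$; as in the paper $\upper$ abbreviates $\upper^1$). I will use two standard facts about $\mathbb{T}$: that each $\upper^k$ is a field automorphism of $\mathbb{T}$ which is the identity on $\R$, satisfies $\upper^k(L_j)=L_{j-k}$ for $j\ge k$, and acts on the iterated exponentials and logarithms (with $e_0=x$) by $e_i\upper=e_{i+1}$, $x\upper=e_1$, $\ell_j\upper^k=e_{k-j}$ for $1\le j\le k$, and $\ell_j\upper^k=\ell_{j-k}$ for $j>k$; and that $\ell_1,\ell_2,\dots$ are algebraically independent over $\Texp$.

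For $K(\ell_1,\dots,\ell_n)$, truncation closedness is immediate: $K\cup\{\ell_1,\dots,\ell_n\}$ is a truncation closed subset of $\mathbb{T}$, since $K$ is truncation closed and each $\ell_i$ is a monomial, whose only truncations are $0$ and $\ell_i$ itself; so by Proposition~\ref{D}(i) the field it generates, namely $K(\ell_1,\dots,\ell_n)$, is truncation closed. For $\il$-closedness, since $\ell_n\in L_n\setminus L_{n-1}$ the field $K(\ell_1,\dots,\ell_n)$ has depth exactly $n$, so by definition it suffices to show that $K(\ell_1,\dots,\ell_n)\upper^n$ is $\il$-closed in $\Texp$. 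Put $K^{(0)}:=K$ and $K^{(j+1)}:=(K^{(j)})\upper(x)$. Each $K^{(j)}$ is a subfield of $\Texp$ containing $\R$, so by the preceding Corollary each $K^{(j)}$ is $\til$-closed in $\Texp$, and an easy induction on $j$ (using $x\upper=e_1$, $e_i\upper=e_{i+1}$) gives $K^{(j)}=K\upper^j(e_0,\dots,e_{j-1})$. Since $\ell_j\upper^n=e_{n-j}$ for $1\le j\le n$,
\[
K(\ell_1,\dots,\ell_n)\upper^n\ =\ K\upper^n(\ell_1\upper^n,\dots,\ell_n\upper^n)\ =\ K\upper^n(e_0,\dots,e_{n-1})\ =\ K^{(n)},
\]
which is $\til$-closed, in particular $\il$-closed, in $\Texp$. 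Hence $K(\ell_1,\dots,\ell_n)$ is $\til$-closed.

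For $K(\{\ell_n\}_n)=\bigcup_n K(\ell_1,\dots,\ell_n)$: it is truncation closed, being a union of truncation closed sets, and it has infinite depth, so $\il$-closedness amounts to showing that $K(\{\ell_n\}_n)\upper^m\cap\Texp$ is $\il$-closed in $\Texp$ for every $m$. For $n\ge m$, applying $\upper^m$ and the formulas above for $\ell_j\upper^m$ gives
\[
K(\ell_1,\dots,\ell_n)\upper^m\ =\ K\upper^m(e_0,\dots,e_{m-1})(\ell_1,\dots,\ell_{n-m})\ =\ K^{(m)}(\ell_1,\dots,\ell_{n-m}),
\]
a field obtained from the subfield $K^{(m)}$ of $\Texp$ by adjoining $\ell_1,\dots,\ell_{n-m}$; as these are algebraically independent over $\Texp$, its intersection with $\Texp$ is exactly $K^{(m)}$. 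Since the $K(\ell_1,\dots,\ell_n)$ increase with $n$, taking unions over $n$ yields $K(\{\ell_n\}_n)\upper^m\cap\Texp=K^{(m)}$, which is $\til$-closed, hence $\il$-closed, in $\Texp$. Therefore $K(\{\ell_n\}_n)$ is $\til$-closed.

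The step I expect to require the most care is the bookkeeping with the shifts $\upper^k$ — in particular verifying the identity $K(\ell_1,\dots,\ell_n)\upper^n=K^{(n)}$ and the computation of $K(\{\ell_n\}_n)\upper^m\cap\Texp$, the latter resting on the algebraic independence of the iterated logarithms over $\Texp$ (the one genuinely non-formal ingredient). Everything else is a formal $n$-fold iteration of the preceding Corollary together with the definitions of depth, $\il$-closedness, and $\til$-closedness for subsets of $\mathbb{T}$.
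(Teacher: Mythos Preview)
Your proof is correct and follows exactly the route the paper intends: the paper states Corollary~\ref{logs1} immediately after the corollary that $K\upper(x)$ is $\til$-closed, with only the sentence ``This gives a nice result for certain special subsets of $\mathbb{T}$'' as justification, and your argument is precisely the $n$-fold iteration of that corollary, with the bookkeeping on $\upper^k$ and depths made explicit. Your handling of the infinite-depth case via the algebraic independence of $\ell_1,\ell_2,\dots$ over $\Texp$ is the natural way to compute $K(\{\ell_n\}_n)\upper^m\cap\Texp$, and is a detail the paper simply leaves to the reader.
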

\noindent
This observations will be helpful further on in the analysis of integrals.

\noindent
Recall:
  $$(g')\upper^n\ =\ \frac{1}{e_1\cdots e_n}f'.$$

\begin{corollary}\label{tL13} If $S\subseteq L_n$ is $\til$-closed and $\ell_0^{-1},\dots, \ell_{n-1}^{-1} \in S$ or $S\supseteq \{\ell_n;n\in \mathbb{N}\}$. Then $\R\{S\}$ and
$\R\<S\>$ are $\til$-closed.
\end{corollary}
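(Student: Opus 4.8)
The plan is to transport everything from $L_n$ to $\Texp$ along the automorphism $\upper^n$ and then invoke the ``variant'' result, Corollary~\ref{tL11}. Recall that $\upper^n\colon L_n\to\Texp$ is a field isomorphism fixing $\R$, carrying the monomial group of $L_n$ order-preservingly onto that of $\Texp$, preserving infinite sums, and satisfying $(g')\upper^n=\mathfrak{e}\,(g\upper^n)'$ with $\mathfrak{e}:=(e_1\cdots e_n)^{-1}$ a monomial of $\Texp$; thus $\upper^n$ is a differential isomorphism of $(L_n,\partial)$ onto $(\Texp,\mathfrak{e}\partial)$, and for $T\subseteq L_n$ it sends the differential subring (resp.\ subfield) of $(L_n,\partial)$ generated by $\R\cup T$ onto $\R\{T\upper^n;\mathfrak{e}\}$ (resp.\ $\R\langle T\upper^n;\mathfrak{e}\rangle$). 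Transporting along $\upper^n$ visibly preserves truncation closedness, and it preserves $\il$-closedness by the way $\upper^n$ acts on monomials; together with the definition of $\til$-closedness for subsets of $\mathbb{T}$ this gives: $\R\{S\}$ (resp.\ $\R\langle S\rangle$) is $\til$-closed iff its $\upper^n$-image is. Finally, since $\ell_i\upper^n=e_{n-i}$ for $0\le i\le n$, one has $(\ell_i^{-1})\upper^n=e_{n-i}^{-1}$.

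\textit{First case: $S\subseteq L_n$ is $\til$-closed with $\ell_0^{-1},\dots,\ell_{n-1}^{-1}\in S$.} As $L_n$ is a differential subfield of $\mathbb{T}$, both $\R\{S\}$ and $\R\langle S\rangle$ lie in $L_n$, so by the previous paragraph it suffices to show that $\R\{S\upper^n;\mathfrak{e}\}$ and $\R\langle S\upper^n;\mathfrak{e}\rangle$ are $\til$-closed in $\Texp$. Now $S\upper^n$ is $\til$-closed, and $e_1^{-1},\dots,e_n^{-1}\in S\upper^n$ by the last observation above, whence $\mathfrak{e}=(e_1\cdots e_n)^{-1}\in\supp(S\upper^n)^*$. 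So the conclusion is exactly Corollary~\ref{tL11} applied to $S\upper^n$ with $\smallk=\R$ and this $\mathfrak{e}$.

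\textit{Second case: $S\supseteq\{\ell_n:n\in\mathbb{N}\}$, so $S$ has infinite depth.} Put $S_n:=S\cap L_n$. Then $S=\bigcup_nS_n$, each $S_n$ is $\til$-closed of depth $\le n$ and contains $\ell_0,\dots,\ell_n$, and $\R\{S\}=\bigcup_n\R\{S_n\}$, $\R\langle S\rangle=\bigcup_n\R\langle S_n\rangle$ are increasing unions. The differential ring $\R\{S_n\}$ (and \emph{a fortiori} the field $\R\langle S_n\rangle$) already contains the reciprocals $\ell_i^{-1}$ needed above — they arise from $\ell_0,\dots,\ell_n$ by applying $\partial$ and multiplying by elements already present — so I would replace $S_n$ by a $\til$-closed generating subset $S_n'\subseteq\R\{S_n\}$ (resp.\ $\subseteq\R\langle S_n\rangle$) containing these reciprocals, obtained by adjoining them together with the finitely many elements $-\ell_1,\dots,-\ell_n$ needed to keep $\il$-closedness; the first case applied to $S_n'$ then shows every $\R\{S_n\}$ and $\R\langle S_n\rangle$ is $\til$-closed, of depth $\le n$. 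It remains to see that such an increasing union $D=\bigcup_nD_n$ of $\til$-closed differential subrings (subfields) with $D_n\subseteq L_n$ is itself $\til$-closed: by the definition of $\til$-closedness in infinite depth one must check, for each $m$, that $D\upper^m\cap\Texp=\bigcup_n(D_n\cap L_m)\upper^m$ is $\til$-closed in $\Texp$; this is an increasing union of subsets of $\Texp$ of depth $\le m$, and each $D_n\cap L_m$ is $\til$-closed because the depth-$\le m$ part of a $\til$-closed subset of $\mathbb{T}$ is again $\til$-closed — truncation closedness is immediate, and $\il$-closedness survives since, read through $\upper^m$, its defining conditions only involve logarithms of level-$\ge1$ monomials, which never leave $\Texp$.

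The first case is routine: it is Corollary~\ref{tL11} after the change of variables $\upper^n$, the only thing to verify being $\mathfrak{e}\in\supp(S\upper^n)^*$. The genuine work is in the second case — the bookkeeping that the reciprocals $\ell_i^{-1}$ witnessing $\mathfrak{e}$ really do lie in $\R\{S_n\}$ (this is where the precise normalization of $\partial$ matters, via the exact monomial $\mathfrak{e}$ produced by $\upper^n$), and the two stability statements, that passing to the depth-$\le m$ part preserves $\til$-closedness and that $\til$-closedness passes to the increasing union. These last are the analogue, for the tower $\mathbb{T}=\bigcup_nL_n$, of the mechanism behind Lemmas~\ref{tL5} and~\ref{tL6}, and they are where $\il$-closedness is genuinely used.
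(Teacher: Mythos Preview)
Your first case is essentially identical to the paper's proof: transport $S$ to $\Texp$ via $\upper^n$, observe this turns $\partial$ into $\mathfrak{e}\partial$ with $\mathfrak{e}=(e_1\cdots e_n)^{-1}$, verify $\mathfrak{e}\in\supp(S\upper^n)^*$ from $(\ell_i^{-1})\upper^n=e_{n-i}^{-1}$, and apply Corollary~\ref{tL11}. The paper writes exactly this and nothing more.

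In particular, the paper's proof \emph{does not treat the second alternative} $S\supseteq\{\ell_m:m\in\N\}$ at all; it simply stops after the first case. Your reduction of the second case to the first---intersecting with $L_n$, observing that $\ell_i^{-1}=\ell_0\cdots\ell_{i-1}\cdot\ell_{i+1}'$ already lies in $\R\{S_n\}$, enlarging $S_n$ by these monomials together with the $-\ell_j$ needed for $\il$-closedness, and then passing to the union---is additional work that the paper omits. The bookkeeping you flag (that restricting to depth $\le m$ preserves $\til$-closedness, and that $\til$-closedness is stable under increasing unions) is real and is not carried out in the paper either; your sketch of why these hold is plausible, though the step ``replace $S_n$ by a $\til$-closed $S_n'$ containing the $\ell_i^{-1}$'' deserves one more sentence verifying that the finite set you adjoin really is closed under the $\il$-operations (it is, once you note $\int c(e_j^{\pm1})=\pm e_{j-1}$ and that all $\ell_k$ with $k\le n$ are already in $S_n$). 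So: same approach, but you go further than the paper on the second alternative.
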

\begin{proof} By the formula above for derivatives, $g\mapsto g\upper^n: L_n \to \mathbb{T}^E$ is a differential field isomorphism where $L_n$ is equipped with the usual derivation and $\mathbb{T}^E$ is given the derivation $\mathfrak{e}_n\frac{d}{dx}$, where $\mathfrak{e}_n:=\frac{1}{e_1\cdots e_n}$.  It follows that 
$\R\{S\}\upper^n=\R\{S\upper^n;\mathfrak{e}_n\}$.
Since $\ell_0^{-1},\dots, \ell_{n-1}^{-1}\in S$ we have
$(\ell_0^{-1})\upper^n=e_n^{-1},\dots, (\ell_{n-1})\upper^n=e_1^{-1}$, all in $S\upper^n$, and so $\mathfrak{e}_n\in \supp(S\upper^n)^*$.
Hence $\R\{S\upper^n;\mathfrak{e}_n\}$ is $\til$-closed by Corollary~\ref{tL11}
It follows that $\R\{S\}$ is $\til$-closed.
\end{proof}

\subsection*{Henselization and Real Closure} It is easy to see that for a truncation closed additive subgroup $S$ of $\mathbb{T}$, we have $\supp(S)=\operatorname{lm}(S^{\neq})$. 
Thus using the fact that the henselization of a valued field is an immediate extension, together with proposition \ref{D} we get the following.

\begin{prop}\label{tL14} Suppose $K$ is a $\til$-closed subfield of $\T$ containing $\R$. Then the henselization of $K$ inside $\mathbb{T}$ is $\til$-closed.
\end{prop}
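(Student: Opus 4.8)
The plan is to reduce the claim to the combination of two facts already available in the excerpt: that the henselization of a valued field is an immediate extension, and the truncation-preservation results of Proposition~\ref{D}. First I would recall that since $K$ is $\til$-closed and contains $\R$, it is in particular truncation closed, and by Corollary~\ref{tL3} (applied with $S=K$) the group $G(K)=\supp K$ is contained in $K$; moreover $K$ is an additive subgroup of $\T$, so the observation $\supp(K)=\operatorname{lm}(K^{\neq})$ applies. The key point is that the henselization $K^h$ of $K$ inside $\T$ is an immediate extension of $K$: its value group equals the value group of $K$, namely $v(\fM_K)$ where $\fM_K=\fM\cap K$, and its residue field equals that of $K$. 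Consequently $\supp(K^h)\subseteq G(K)\subseteq K$, and in fact $\supp(K^h)=\supp(K)$, so no new monomials appear when passing to the henselization.

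Next I would invoke Proposition~\ref{D}(iii): since $K$ is truncation closed and $K\supseteq\R\supseteq\smallk$ (here $\smallk=\R$), the henselization of $K$ inside the relevant Hahn field is truncation closed. To be careful about the ambient structure: $\T$ is the directed union $\bigcup_n L_n$, and $K$ has some depth $n$ (possibly $\infty$); working inside a fixed $L_n\cong\Texp$ one has $K^h\subseteq L_n$ as well, because the henselization is immediate and $L_n$ is henselian (being a Hahn field). Applying Proposition~\ref{D}(iii) inside the Hahn field presentation of $L_n$ — or, if $K$ has depth $\infty$, inside each $L_n$ and then taking the union, using that a union of truncation closed sets is truncation closed — yields that $K^h$ is truncation closed. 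This handles the truncation-closedness half of the $\til$-closedness conclusion.

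For the $\il$-closedness half, the main observation is that $\il$-closedness is a condition purely on $\supp(K^h)$ together with membership of certain elements built from the monomial data $c_i(\fm_i)$, $\fm_i$, etc. Since $\supp(K^h)=\supp(K)$ and $K\subseteq K^h$, every element whose membership in $K$ is demanded by the $\il$-closedness of $K$ already lies in $K^h$. More precisely: if $\fm_0\cdots\fm_n\in\supp(K^h)=\supp(K)$, then because $K$ is $\il$-closed the elements $\int c_i(\fm_i)$, $\fm_i$, $\int c_1(\fm_1)+\cdots+\int c_n(\fm_n)$, and $\fm_1\cdots\fm_n$ all lie in $K\subseteq K^h$ (transporting via $\upper^n$ in the $\T$-version of the definition). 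Hence $K^h$ is $\il$-closed, and being also truncation closed, it is $\til$-closed.

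The main obstacle I expect is purely bookkeeping about ambient fields and depths: one must verify that taking the henselization inside $\T$ does not increase depth (so that the $\Texp$-level results transfer), and that the immediacy of the henselization really does give $\supp(K^h)=\supp(K)$ rather than merely $\supp(K^h)\subseteq G(K)$. The first follows because each $L_n$ is a Hahn field hence henselian and $K\subseteq L_n$ forces $K^h\subseteq L_n$; the second follows from the standard description of immediate extensions together with $\operatorname{lm}((K^h)^{\neq})=\operatorname{lm}(K^{\neq})=\supp(K)$, using that $K^h$ is again a truncation closed additive group once truncation-closedness is established. No genuinely new differential-algebraic input is needed beyond what Proposition~\ref{D} already provides.
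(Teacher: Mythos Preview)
Your proposal is correct and follows essentially the same line as the paper: use that the henselization is an immediate extension together with Proposition~\ref{D}(iii) to get truncation closedness, then use $\supp(K^h)=\operatorname{lm}((K^h)^{\neq})=\operatorname{lm}(K^{\neq})=\supp(K)$ to inherit $\il$-closedness from $K$. The paper states this in one sentence; your additional bookkeeping about depths and ambient Hahn fields is sound but not strictly necessary, since $\T$ is real closed (hence henselian) and sits inside the single Hahn field $\R[[\fM]]$, so Proposition~\ref{D}(iii) applies directly.
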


\noindent
We also have a similar result for the real closure.
\begin{lemma}\label{tL15} Let $K$ be a $\til$-closed subfield of $\T$ containing $\R$. Then the real closure of $K$ inside $\mathbb{T}$ is $\til$-closed.
\end{lemma}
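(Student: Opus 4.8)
The plan is to treat the two halves of $\til$-closedness separately: truncation closedness comes from Proposition~\ref{D} applied to a henselization, and $\il$-closedness comes from a computation with supports, after first reducing to the purely exponential case.

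First I would pass to $\Texp$. Each $L_n$ is real closed, being an increasing union of Hahn fields over $\R$ with divisible value groups; so if $K$ has finite depth $n$, the real closure of $K$ inside $\T$ lies in $L_n$ and coincides with the real closure of $K$ computed inside $L_n$. Transporting along the isomorphism $\upper^n$, which preserves valuation, ordering and infinite sums, hence truncations and the data defining $\il$-closedness, we may assume $K\subseteq\Texp=\R[[\fM]]$. If $K$ has depth $\infty$, then any $a\in\T$ algebraic over $K$ is algebraic over some $K\cap L_m$, hence over the real closed field $L_m$, hence already lies in $L_m$; so the real closure of $K$ in $\T$ is the union over $m$ of the real closures of the $\til$-closed fields $K\cap L_m$ inside $L_m$, and it suffices to treat these. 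From now on assume $K\subseteq\Texp$ is $\til$-closed with $\R\subseteq K$, and let $L$ be its real closure inside $\Texp$.

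For truncation closedness: by Proposition~\ref{tL14} the henselization $K^h$ of $K$ is $\til$-closed, in particular truncation closed, and $K^h\subseteq\Texp$ since $\Texp$ is henselian; as $L$ contains $K^h$ and is algebraic over it, and $K^h$ is a henselian truncation closed field of characteristic $0$, Proposition~\ref{D}(iv) gives that $L$ is truncation closed. For $\il$-closedness, by the remark preceding Corollary~\ref{tL3} it suffices to show $\fm_i\in L$ and $\int c_i(\fm_i)\in L$ for every $\fm_0\cdots\fm_n\in\supp(L)$ with $\fm_i\in\fM_i$. Since $L$ and $K$ are truncation closed and contain $\R$, dividing an element by its leading coefficient and truncating just below its leading monomial shows $\supp(L)=\fM\cap L$ and $\supp(K)=\fM\cap K$, which are groups. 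Because $L/K$ is algebraic, $L$ is real closed, and $\fM$ is divisible, $\fM\cap L$ is the divisible hull of $\fM\cap K$ inside $\fM$, so every $\fm\in\supp(L)$ satisfies $\fm^q\in\supp(K)$ for some $q\geq 1$. Writing $\fm=\fm_0\cdots\fm_n$, its decomposition over $\fM_0,\dots,\fM_n$ is $\fm^q=\fm_0^q\cdots\fm_n^q$, and $\il$-closedness of $K$ applied to $\fm^q\in\supp(K)$ gives $\fm_i^q\in K$ and $\int c_i(\fm_i^q)\in K$; as $c_i(\fm_i^q)=q\,c_i(\fm_i)$ and $\R^{\times}\subseteq K$, also $\int c_i(\fm_i)\in K\subseteq L$, while taking the unique positive $q$-th root in $L$ of $\fm_i^q\in K\subseteq L$ puts $\fm_i$ in $L$. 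The remaining items $\fm_1\cdots\fm_n$ and $\int c_1(\fm_1)+\cdots+\int c_n(\fm_n)$ then lie in $L$ because $L$ is a field. Hence $L$ is $\il$-closed, and so $\til$-closed.

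The step I expect to be the main obstacle is the identification $\supp(L)=\fM\cap L=$ the divisible hull of $\supp(K)$: this uses that $L/K$ is algebraic, that $\Texp$ (equivalently each $L_n$) is real closed with divisible monomial group, and the fact, already used in the Henselization subsection, that truncation closedness forces leading monomials of field elements into the field. The depth reduction at the outset is routine but must be spelled out to legitimately invoke the Hahn-field results of Proposition~\ref{D}.
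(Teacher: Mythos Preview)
Your proof is correct and follows essentially the same route as the paper's: truncation closedness via Proposition~\ref{tL14} and Proposition~\ref{D}(iv), and $\il$-closedness by observing that the monomial group of the real closure is the divisible hull of that of $K$, so each relevant $\fm_i$ has a power in $K$ and one recovers $\fm_i$ as a $q$-th root and $\int c_i(\fm_i)$ by dividing $\int c_i(\fm_i^q)$ by $q$. The only structural difference is that the paper works directly with $F\upper^n$ for each $n$ rather than first reducing to $K\subseteq\Texp$; your explicit depth reduction and your justification that $\supp(L)=\fM\cap L$ is the divisible hull of $\supp(K)$ are details the paper leaves implicit.
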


\begin{proof}
Let us denote by $F$ the real closure of $K$ inside $\mathbb{T}$. By the previous proposition, we may assume that $K$ is henselian. 
Since the real closure is an algebraic extension, and algebraic extensions of henselian fields are truncation closed, we get that $F$ is truncation closed. 
Let $\exp(\sum_i a_i)\in \supp_{\exp}(F\upper^n)$. Then there is $m$ such that $\exp(m\sum_i a_i)\in \supp_{\exp}(K\upper^n)\subseteq K\upper^n$. 
Hence $ma_i,m\sum_ia_i\in K\upper^n$ gives $a_i,\sum_ia_i\in K\upper^n\subseteq F\upper^n$ for $i\in\{0,\ldots,n\}$. $E(a_i)$ is a root of $X^m-E(ma_i) \in K\upper^n[X]$ for $i\in\{0,\ldots,n\}$. Thus $E(a_i)\in F\upper^n$ for $i\in\{0,\ldots,n\}$.   
\end{proof}

\subsection*{Some transcendental extensions} Let $\mathcal{F}= (\mathcal{F}_n)$ be a family such that for each $n$: $\mathcal{F}_n\subseteq \R[[X_1,\ldots,X_n]]$, and for all $F\in \mathcal{F}_n$ we have $\partial F/\partial X_i \in \mathcal{F}_n$ for $i=1,\ldots,n$.
Let $K$ be a subfield of $\mathbb{T}$.
We define the $\mathcal{F}$-\textit{extension} of $K$, $K(\mathcal{F},\prec 1)$, to be
the smallest subfield of $\mathbb{T}$ that contains $K$ and the set 
$$\{F(f_1,\ldots,f_n): F\in \mathcal{F},\ f_i\in K,\ f_i\prec 1 \mbox{ for } i=1,\ldots,n \} .$$

\begin{lemma}\label{tL16}
Suppose $K$ is a truncation closed subfield of $\mathbb{T}$ containing $\R$. Then $K(\mathcal{F},\prec 1)$ is truncation closed. 
\end{lemma}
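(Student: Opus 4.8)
The plan is to transfer the problem to the ambient Hahn field and then re-run the argument behind Proposition~\ref{ta}. First, recall that $\T=\bigcup_n L_n$ is a truncation closed subfield of the Hahn field $\R[[G^{LE}]]$ over $\R$, where $G^{LE}$ is the group of logarithmic--exponential transmonomials; since $K$ is truncation closed (that is, truncation closed as a subset of $\R[[G^{LE}]]$), it is a truncation closed subfield of $\R[[G^{LE}]]$ containing $\R$. Moreover, for $F\in\cF_n$ and $f_1,\dots,f_n\in K$ with $f_i\prec 1$ the series $F(f_1,\dots,f_n)$ has the same value whether evaluated in $\T$ or in $\R[[G^{LE}]]$ and it lies in $\T$, so the subfield of $\T$ generated by $K$ and all such $\cF$-values equals the subfield of $\R[[G^{LE}]]$ generated by the same set. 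Hence it suffices to prove the following purely Hahn-field statement: \emph{if $\smallk$ has characteristic $0$ and $E\supseteq\smallk$ is a truncation closed subfield of $\smallk[[\fM]]$, then the subfield $E(\cF,\prec 1)$ of $\smallk[[\fM]]$ generated by $E$ together with the set $A:=\{F(\vec a):F\in\cF_n,\ \vec a\in(E\cap\smallo)^{\times n},\ n\ge 1\}$ is truncation closed.} Note that $E(\cF,\prec 1)$ need not be $\cF$-closed, so Proposition~\ref{ta} cannot be invoked verbatim; what is needed is the one-step version of its proof.

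For this I would argue, as in \cite{D}, through Proposition~\ref{D}. By parts (i) and (ii) of that proposition it is enough to show that every proper truncation of each $a=F(\vec f)\in A$ (with $F\in\cF_n$ and $\vec f=(f_1,\dots,f_n)\in(E\cap\smallo)^{\times n}$) already lies in the ring $E[A]$: then $E[A]$ is truncation closed by Proposition~\ref{D}(ii), and hence so is the field it generates, which is $E(\cF,\prec 1)$, by Proposition~\ref{D}(i). Fixing a truncation point, write $f_i=\bar f_i+\tilde f_i$ with $\bar f_i\trunceq f_i$ chosen far enough out that the remainders $\tilde f_i$ are sufficiently small; since $E$ is truncation closed and a truncation of an infinitesimal is infinitesimal, $\bar f_i\in E\cap\smallo$. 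Multivariate Taylor expansion of $F$ about $\bar{\vec f}$ gives
\[
F(\vec f)\ =\ \sum_\nu \frac{\partial^\nu F(\bar{\vec f})}{\nu!}\,\bigl(\tilde{\vec f}\bigr)^\nu,
\]
and because the ring $\smallk[X_1,\dots,X_n,\cF_n]$ is closed under each $\partial/\partial X_i$, every $\partial^\nu F$ is a polynomial over $\smallk$ in $X_1,\dots,X_n$ and finitely many elements of $\cF_n$; hence each coefficient $\partial^\nu F(\bar{\vec f})/\nu!$ is a polynomial over $E$ in $\cF$-values of the infinitesimal tuple $\bar{\vec f}$, so it lies in $E[A]$. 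Any given proper truncation of $F(\vec f)$ involves only finitely many monomials, so it equals a finite subsum of the displayed series plus one further truncation of a single term $\bigl(\partial^\nu F(\bar{\vec f})/\nu!\bigr)\bigl(\tilde{\vec f}\bigr)^\nu$; the finite subsum lies in $E[A]$, and the remaining truncated term lies in $E[A]$ by the product-truncation bookkeeping underlying Proposition~\ref{D}(ii).

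The main obstacle is precisely this last technical step: controlling proper truncations of the multivariate $\cF$-values $F(\vec f)$, in particular choosing the split $f_i=\bar f_i+\tilde f_i$ so that all but finitely many Taylor terms get truncated away, and handling the cases where the truncation point is $\succeq 1$ or where the $f_i$ have incomparable dominant monomials. This is exactly the heart of the transcendental-adjunction results of \cite{D}; everything else here is the routine observation that passage from $\T$ to $\R[[G^{LE}]]$ changes neither truncation closedness nor the field $K(\cF,\prec 1)$.
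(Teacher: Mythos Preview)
Your plan is the right one, and the reduction to the ambient Hahn field plus Proposition~\ref{D}(i)--(ii) is exactly how one should begin. But the argument as written has a genuine circularity that you paper over with the phrase ``product-truncation bookkeeping underlying Proposition~\ref{D}(ii)''. To truncate the single leftover term $\bigl(\partial^{\nu}F(\bar{\vec f})/\nu!\bigr)\,(\tilde{\vec f})^{\nu}$ you must truncate the coefficient $\partial^{\nu}F(\bar{\vec f})$; since $\partial^{\nu}F$ lies in the ring $\smallk[X_1,\dots,X_n,\cF_n]$, this coefficient is a polynomial over $E$ in values $G(\bar{\vec f})$ with $G\in\cF_n$, and truncating \emph{those} is precisely the statement you are trying to prove for other elements of $A$. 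Proposition~\ref{D}(ii) does not help here: it presupposes that the factors live in something already known to be truncation closed, which $E[A]$ is not yet.

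What breaks this loop---and what you are missing---is a well-founded induction on the order types of the supports of the arguments. The paper carries this out via a minimal counterexample: take a maximal truncation closed subfield $E$ of $K(\cF,\prec 1)$, choose $n$ least and then $(o(f_1),\dots,o(f_n))$ lexicographically least so that some $F(f_1,\dots,f_n)\notin E$, pick a single index $k$ and a monomial $\fm\in\supp(f_k)$ adapted to the truncation point, and Taylor-expand in the \emph{one} variable $X_k$ about the proper truncation $h=f_k|_{\fm}$. Because $o(h)<o(f_k)$, every coefficient $\partial_k^{\,m}F(f_1,\dots,h,\dots,f_n)$ already lies in $E$ by minimality, and the given truncation is then a truncation of a finite $E$-combination, hence in $E$. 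Your multivariate expansion can be made to work the same way, but only after you set up the analogous induction; invoking \cite{D} at the end does not substitute for doing so. (A smaller point: a proper truncation of $F(\vec f)$ need not ``involve only finitely many monomials'', and in general it is a sum of truncations of finitely many Taylor terms, not a finite subsum plus one truncated term.)
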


\begin{proof}
Let $E$ be the largest truncation closed subfield of $K(\mathcal{F},\prec 1)$.
We will proceed by contradiction, and assume that $K(\mathcal{F},\prec 1)\setminus E$ is nonempty.
Let $n$ be minimal such that there are $f_1,\ldots, f_n \in K^{\prec 1}$ and $F\in \mathcal{F}_n$ such that the element $F(f_1,\ldots,f_n)\notin E$.
Let $f_1,\ldots,f_n\in K^{\prec 1}$ be such that $(o(f_1),\ldots, o(f_n))$ is minimal in the lexicographic order and there is $F\in \mathcal{F}_n$ such that $F(f_1,\ldots,f_n)\notin E$.
Let $f = F(f_1,\ldots, f_n)$ with $F\in \mathcal{F}_n$ so $f \notin E$.
It suffices to show that all proper truncations of $f$ lie inside $E$.
If \[o(f_1)=\ldots = o(f_n) = 0\] then $F(f_1,\ldots, f_n)$ is a constant.
Let us assume that $f_i\neq 0$ for some $i\in \{1,\ldots,n\}$.
Let $\phi$ be a proper truncation of $f$.
Take $k\in \{1,\ldots,n\}$, $\monm\in \supp(f_k)$, and $N=N(\monm)\in \mathbb{N}$ such that $\monm^n < \supp(\phi)$ for all $n>N$.
Let $h,g \in K$ be such that $f_k=h+g$, $ h =f_k|_{\monm}$ and $\operatorname{lm}(g) = \monm$.
By Taylor expansion we have 
\[F(f_1,\ldots,f_n) = \sum_{n} \frac{\partial^n F}{\partial X_k^n}(f_1,\ldots,f_{k-1},h,f_{k+1},\ldots, f_n) \frac{g^n}{n!}.\]
Thus $\phi$ is a truncation of 
\[\sum_{n\leq N} \frac{\partial^n F}{\partial X_k^n}(f_1,\ldots,f_{k-1},h,f_{k+1},\ldots, f_n) \frac{g^n}{n!}.\]
Given that we took $(o(f_1),\ldots,o(f_n))$ minimal, and that $o(h) < o(f_k)$, we get $\phi \in E$.  
\end{proof}

\begin{lemma}\label{tL18}
Suppose $K$ is a $\til$-closed subfield of $L_n$ and $f\in K$ is such that for any proper truncation $g$ of $f$, we have $\exp(g) \in K$. Then $K(\exp(f))$ is $\til$-closed.
\end{lemma}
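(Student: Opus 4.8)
The plan is to reduce first to the case $K\subseteq\mathbb{T}_{\exp}$ via the truncation- and $\exp$-preserving field isomorphism $\upper^n\colon L_n\to\mathbb{T}_{\exp}$ (and, as in the surrounding results, to take $\R\subseteq K$), and then to exploit the identity $\exp(f)=\exp(f_\succ)\exp(f_\asymp)\exp(f_\prec)$ together with $\exp(f_\asymp)\in\R\subseteq K$. Two cases arise, according to whether $f$ is purely infinite.

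If $f\neq f_\succ$, then $f_\succ=f|_1$ is a proper truncation of $f$, so $E(f_\succ)=\exp(f|_1)\in K$ and hence $\exp(f)=k\exp(g)$ with $k:=E(f_\succ)\exp(f_\asymp)\in K^\times$ and $g:=f_\prec\in K^{\prec1}$; thus $K(\exp(f))=K(\exp(g))$ (if $g=0$ we are done). Here I would first note that every proper truncation $h$ of $g$ equals $g|_\monm$ for some $\monm\prec1$, so that $f|_\monm=f_\succ+f_\asymp+h$ is a proper truncation of $f$ and therefore $\exp(h)\in K$. Truncation closedness of $K(\exp(g))$ then follows by the Taylor‑expansion technique of Lemma~\ref{tL16}: given a proper truncation $\exp(g)|_\fn$, pick any $\monm\in\supp g$, write $g=h+b$ with $h=g|_\monm$ and $\operatorname{lm}(b)=\monm$, and expand $\exp(g)=\sum_k\exp(h)b^k/k!$; since $\supp(\exp(h)b^k/k!)\preceq\monm^k\prec\fn$ for all large $k$, one gets $\exp(g)|_\fn=\bigl(\sum_{k\le N}\exp(h)b^k/k!\bigr)|_\fn$, which lies in $K$ because $\exp(h),b\in K$ and $K$ is truncation closed. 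Hence $K\cup\{\exp(g)\}$ is truncation closed, so $K(\exp(g))$ is truncation closed by Proposition~\ref{D}(i). For $\il$‑closedness, $\supp(\exp(g))\subseteq\{1\}\cup(\supp g)^*\subseteq(\supp K)^*\subseteq\supp K$ (using $G(K)\subseteq K$ from Corollary~\ref{tL3}), so $\supp\bigl(K(\exp(g))\bigr)\subseteq G(K)\subseteq\supp K$ and the $\il$‑conditions are inherited from $K$.

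The case $f=f_\succ$ is the substantive one: now $\exp(f)=E(f)$ is a single transmonomial, which in general is \emph{not} a monomial of $K$. Truncation closedness of $K(E(f))$ is immediate, since the monomial $E(f)$ has only the truncations $0$ and $E(f)$, so $K\cup\{E(f)\}$ is truncation closed and Proposition~\ref{D}(i) applies. For $\il$‑closedness, write $f=a_0+\cdots+a_m$ with $a_i\in A_i$. Since every monomial of $a_i$ dominates $\fM^{(i-1)}$, each partial tail $a_j+\cdots+a_m$ is a truncation of $f$ and so lies in $K$, and the proper ones among these have their $\exp$ in $K$ by hypothesis; telescoping then gives $a_i\in K$ and $E(a_i)\in K(E(f))$ for every $i$ (indeed $E(a_i)\in K$ except, possibly, at the lowest nonzero level, where it equals $E(f)$ times a unit of $K$). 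Now every monomial of $K(E(f))$ has the form $x^rE(d+if)$ with $x^rE(d)\in\supp K$ and $i\in\mathbb{Z}$ (its support lies in the group generated by $\supp K\cup\{E(f)\}$, and $G(K)\subseteq K$); writing $d=\sum_jd_j$ so that $d+if=\sum_j(d_j+ia_j)$, one verifies the conditions defining $\il$‑closedness termwise, combining the $\il$‑closedness of $K$ at $x^rE(d)$ with $a_j\in K$ and $E(a_j)\in K(E(f))$. Thus $K(E(f))$ is $\il$‑closed, hence $\til$‑closed.

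The main obstacle is precisely this last verification. In the case $f\neq f_\succ$ no new monomials enter, so $\il$‑closedness comes for free; but for purely infinite $f$ the transmonomial $E(f)$ lies outside $\supp K$, and the only handle on its level‑pieces $a_i$ and their exponentials is the truncation hypothesis on $f$ — this is exactly the phenomenon that $\il$‑closedness is designed to control (compare the example showing that the differential ring generated by a truncation closed set need not be truncation closed). Everything else — the reduction to $\mathbb{T}_{\exp}$, the unit factorizations, and the Taylor computation — is routine given Lemma~\ref{tL16}, Proposition~\ref{D}, and Corollary~\ref{tL3}.
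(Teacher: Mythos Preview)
Your argument is correct and follows the same strategy as the paper's: Taylor expansion of $\exp$ for truncation closedness, and the level decomposition $f_\succ=\sum_i a_i$ together with the truncation hypothesis for $\il$-closedness. The only slip is in the Taylor step: you cannot ``pick any $\monm\in\supp g$'', since in a non-archimedean monomial group an arbitrary $\monm\prec1$ need not satisfy $\monm^k\prec\fn$ for large $k$; as in the paper (and in Lemma~\ref{tL16}, which you cite), one must choose $\monm$ so that some power $\monm^N$ already lies at or below the truncation point $\fn$. Your organization differs slightly from the paper's---you split on $f=f_\succ$ versus $f\neq f_\succ$ rather than on $f_\prec=0$ versus $f_\prec\neq0$, and you verify $\il$-closedness explicitly for all monomials of $K(E(f))$ rather than just for $\supp_{\exp}(\exp(f))$---but the substance is the same.
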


\begin{proof}
We first show that $K(\exp(f))$ is truncation closed.
For this, it is enough to show that all truncations of $\exp(f)$ lie inside $K(\exp(f))$.
We may assume that the infinitesimal part of $f$ is nonzero, otherwise $\exp(f) = \exp(f_\asymp)\exp(f_\succ)$ so the only truncations of $\exp(f)$ in this case are $0$ and $\exp(f)$.
Let $c$ be a truncation of 
$$\exp(f) = \exp(f_\asymp) \exp(f_\succ)\sum_n \dfrac{f_\prec^n}{n!}.$$
Then there is $\mathfrak{m}\in \supp(f_\prec)$ and $N=N(\mathfrak{m})\in \mathbb{N}$ such that $\exp(f_\succ)\mathfrak{m}^n\prec \supp(c)$ for all $n>N$.
Let $f_\prec = f_0 + f_1$ with $f_0 = f_\prec|_\mathfrak{m}$ and $\operatorname{Lm}(f_1)= \mathfrak{m}$.
Then  
\[\exp(f) = \exp(f_\asymp) \exp(f_\succ) \sum_n \frac{f_0^n}{n!}\sum_n\frac{f_1^n}{n!} = \exp(f_\asymp) \sum_n \frac{f_0^n}{n!}\sum_n\frac{\exp(f_\succ) f_1^n}{n!}\]
Thus $c$ is a truncation of $\exp(f|_\monm)\sum_{n=0}^N \frac{f_1^n}{n!}$. 
Now let $\exp(a)\in \supp_{\exp}(K(\exp(f))\upper^n)$ Since $\upper^n$ is an exponential field isomorphism it suffices to consider the case $K\subseteq \Texp$ to show that $K(\exp(f))$ is $\il$-closed. Moreover it is enough to show that if $\exp(a_0+\ldots+a_n)\in \supp_{\exp}(\exp(f))$ then $\exp(a_i),a_i\in K(\exp(f))$ for $i\in\{1,\ldots,n\}$. 
We consider two cases. 
First assume that $f_{\prec} = 0$. In this case $a_0+\cdots+a_n=f$ so both $\exp(a_n+\cdots +a_k),a_n+\cdots +a_k$ are in $K$ for $0<k\leq n$ by the assumptions in the Lemma, and thus $\exp(a_i),a_i\in K(\exp(f))$.
Now assume that $f_{\prec}\neq 0$ so $f_\succ$ is a proper truncation of $f$. We have
\[\exp(a_0 + \cdots+a_n))\in \supp_{\exp}(\exp(f))\subseteq \exp(f_\succ)\supp(f_\prec)^*.\]
Note that $\exp(f_\succ)\in K$ and $f_\prec\in K$ by assumption, so $\exp(a_0+\cdots+a_n)\in K$. Now use $\il$-closedness of $K$ to finally conclude that $\exp(a_i),a_i\in K$. Thus $K(\exp(f))$ is $\til$-closed.
\end{proof}

\noindent
Given a subfield $K$ of $\mathbb{T}$ we define the $\exp$-\textit{extension} of $K$ to be the smallest subfield, $K(\exp(K))$, of $\mathbb{T}$ containing $K$, and $\exp(K)$, where 
\[\exp(K)=\{\exp(f): f\in K\}.\]
\noindent
Note that if $K$ is truncation closed and $f\in K$, then $f_\succ = f|_{1}$, and $\exp(f_\succ)\in \exp(K)$.
Thus the $\exp$-extension of $K$ is the $\mathcal{F}$-extension of $K(\{\exp(f_\succ)\}_{f\in K})$ with $\mathcal{F} = \mathcal{F}_1 = \{\sum_n X^n/n!\}$.
Hence we get the following.

\begin{corollary}\label{tL19}
Suppose $K$ is a $\til$-closed subfield of $\T$ containing $\mathbb{R}$. Then $K(\exp(K))$ is $\til$-closed.
\end{corollary}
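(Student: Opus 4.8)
The plan is to run a Zorn's Lemma / minimal‑counterexample argument directly on top of Lemma~\ref{tL18}, after first reducing to the case where $K$ has finite depth. For the reduction, write $K=\bigcup_n(K\cap L_n)$: each $K\cap L_n$ is a $\til$-closed subfield of $L_n$ containing $\R$, and since $\exp$ maps each $L_n$ into itself one checks that $K(\exp(K))=\bigcup_n (K\cap L_n)\big(\exp(K\cap L_n)\big)$. Moreover an increasing union of $\til$-closed subfields of $\T$ is again $\til$-closed: truncation closedness is immediate, and $\il$-closedness holds because $\supp_{\exp}$ of a union is the union of the $\supp_{\exp}$'s, so the elements witnessing $\il$-closedness of a given monomial already lie in one member of the chain. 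Hence it suffices to treat the case $K\subseteq L_N$ for a fixed $N$, and then $K(\exp(K))\subseteq L_N$ as well.

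So assume $K\subseteq L_N$. By Zorn's Lemma pick a maximal $\til$-closed subfield $E$ of $K(\exp(K))$ with $K\subseteq E$, so $E\subseteq L_N$, and I claim $E=K(\exp(K))$. If not, then since $K(\exp(K))$ is generated over $K$ by $\{\exp(f):f\in K\}$ and $E\supseteq K$, some $f\in K$ has $\exp(f)\notin E$; choose such an $f$ with $\supp(f)$ of minimal order type. For any proper truncation $g$ of $f$ we have $g\in K$, because $K$ is truncation closed, and $\supp(g)$ is a proper initial segment of $\supp(f)$, hence of strictly smaller order type; so by minimality $\exp(g)\in E$. Thus $E$ is a $\til$-closed subfield of $L_N$, $f\in E$, and every proper truncation $g$ of $f$ has $\exp(g)\in E$ --- precisely the hypotheses of Lemma~\ref{tL18} --- so $E(\exp(f))$ is $\til$-closed. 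But $E(\exp(f))$ is a $\til$-closed subfield of $K(\exp(K))$ containing $K$ and properly containing $E$, contradicting the maximality of $E$. Therefore $E=K(\exp(K))$ is $\til$-closed.

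I expect the only genuinely delicate part to be the depth bookkeeping in the reduction, since Lemma~\ref{tL18} is stated only for subfields of a fixed $L_n$: one must verify that the intersection of a $\til$-closed subfield of $\T$ with $L_n$ is again $\til$-closed (using that $\il$-closedness of a subset of $\T$ is tested level by level through the maps $\upper^m$), that $\exp(L_N)\subseteq L_N$, and that the increasing union of the finite‑depth pieces indeed reassembles $K(\exp(K))$ and remains $\til$-closed even when its depth is infinite. As an alternative, one could follow the remark preceding the statement: first show $K(\{\exp(f_\succ):f\in K\})$ is $\til$-closed by applying Lemma~\ref{tL18} to the purely infinite elements $f_\succ$ --- whose proper truncations are again purely infinite and lie in $K$, so that their exponentials are among the adjoined monomials --- and then handle the remaining exponentials of infinitesimal elements, where truncation closedness comes from Lemma~\ref{tL16} and $\il$-closedness is automatic, since for infinitesimal $g$ one has $\supp(\exp(g))\subseteq(\supp g)^{*}$ while the support of a $\til$-closed subfield of $\T$ containing $\R$ is already a group by Lemma~\ref{tL0} and Corollary~\ref{tL3}, so no new monomials appear.
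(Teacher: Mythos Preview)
Your main argument is correct and genuinely different from the paper's. The paper does not run a single Zorn/minimal-counterexample argument on all of $\exp(K)$; instead it takes exactly the route you sketch as your ``alternative'': it factors $\exp(f)=\exp(f_\succ)\cdot\exp(f_\asymp)\cdot\sum_n f_\prec^n/n!$, observes that therefore $K(\exp(K))$ is the $\mathcal F$-extension (with $\mathcal F_1=\{\sum_n X^n/n!\}$) of $K(\{\exp(f_\succ):f\in K\})$, and then appeals to Lemmas~\ref{tL16} and~\ref{tL18}. What your direct approach buys is uniformity: a single transfinite induction using Lemma~\ref{tL18} at full strength, with no need to separate the purely infinite and infinitesimal parts or to invoke Lemma~\ref{tL16} at all. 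What the paper's factored approach buys is that each of the two steps is individually simpler---adjoining the $\exp(f_\succ)$ only adds monomials, and the $\mathcal F$-extension step introduces no new monomials at all, so the $\il$-closedness verification becomes essentially trivial in each step (a point the paper actually leaves implicit, and which your last paragraph spells out more carefully than the paper does). The depth bookkeeping you flag---reducing to $K\subseteq L_N$ so that Lemma~\ref{tL18} applies---is equally present in the paper's argument, since Lemma~\ref{tL18} is only stated for subfields of a fixed $L_n$; the paper simply glosses over it.
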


\noindent
We can define the $\exp$-\textbf{closure} of a field $K$, to be the smallest subfield $L$ of $\T$ containing $K$ such that for any $f\in L$ we have $\exp(f)\in L$. By realizing the $\exp$-closure as a directed union of $\exp$-extensions, we get the following.

\begin{corollary}\label{tL20}
Let $K$ be a $\til$-closed subfield of $L_n$ containing $\R$. Then the $\exp$-closure of $K$ is $\til$-closed.
\end{corollary}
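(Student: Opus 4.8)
The plan is to exhibit the $\exp$-closure of $K$ as a countable increasing union of $\exp$-extensions and to apply Corollary~\ref{tL19} at each stage, in the same spirit as the preceding corollaries. Let $d$ denote the depth of $K$, so $K\subseteq L_d$. First I would record the standard fact that each $L_n$ — in particular $L_d$ — is closed under $\exp$: this is built into the construction of $\Texp$, where at each step one adjoins exponentials of the newly appearing purely infinite elements. Hence $L_d$ is an $\exp$-closed subfield of $\T$ containing $K$, so the $\exp$-closure of $K$ is contained in $L_d$; since it also contains $K$, which has depth $d$, its depth is exactly $d$.

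Next I form the chain $K_0:=K$ and $K_{m+1}:=K_m(\exp(K_m))$, the $\exp$-extension of $K_m$. An induction on $m$ shows that each $K_m$ is a $\til$-closed subfield of $\T$ that contains $\R$ and is contained in $L_d$: the inductive step uses Corollary~\ref{tL19} for $\til$-closedness, closure of $L_d$ under $\exp$ for the inclusion $K_{m+1}\subseteq L_d$, while $\R\subseteq K_m$ is inherited. The union $L_\infty:=\bigcup_m K_m$ is then a subfield of $L_d$ containing $K$; it is closed under $\exp$, since any $f\in L_\infty$ lies in some $K_m$ and then $\exp(f)\in K_m(\exp(K_m))=K_{m+1}\subseteq L_\infty$; and it is the smallest $\exp$-closed subfield of $\T$ containing $K$, because any such field contains $K_0$ and, being closed under $\exp$ and under the field operations, contains $K_{m+1}$ whenever it contains $K_m$. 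Thus $L_\infty$ is the $\exp$-closure of $K$.

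Finally I would check that $L_\infty$ is $\til$-closed. Since all the $K_m$, and hence $L_\infty$, have depth $d$, this amounts to showing that $L_\infty\upper^d=\bigcup_m K_m\upper^d$ is truncation closed and $\il$-closed as a subset of $\Texp$; and each $K_m\upper^d$ is truncation closed and $\il$-closed in $\Texp$, since $K_m$ is $\til$-closed of depth $d$. Now a union of truncation closed subsets of $\Texp$ is truncation closed, and a union of $\il$-closed subsets of $\Texp$ is $\il$-closed, because $\supp$ commutes with unions: any transmonomial in the support of the union already lies in the support of some member, where the elements demanded by $\il$-closedness are then found. Hence $L_\infty$ is $\til$-closed. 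I expect the only delicate point to be the depth bookkeeping — that the $\exp$-closure does not leave $L_d$, so that every field in the chain shares the depth $d$; once that is in hand, the statement is a routine iteration of Corollary~\ref{tL19}.
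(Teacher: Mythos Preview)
Your proposal is correct and follows the same approach as the paper: realize the $\exp$-closure as the increasing union $\bigcup_m K_m$ with $K_{m+1}=K_m(\exp(K_m))$ and apply Corollary~\ref{tL19} at each step. The paper's proof is just the one-line remark preceding the corollary (``By realizing the $\exp$-closure as a directed union of $\exp$-extensions\ldots''); your write-up simply fills in the details, including the depth bookkeeping that the paper leaves implicit. One small point: your justification that $L_d$ is $\exp$-closed should appeal to the fact that $f\mapsto f\downer^d$ is an automorphism of the \emph{exponential} ordered field $\T$ sending $\T_{\exp}$ onto $L_d$, rather than to the internal tower construction of $\T_{\exp}$; but the conclusion is correct and the rest of the argument goes through as you wrote it.
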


\subsection*{The Liouville Closure} Let $H$ be a differential subfield of $\T$ containing $\R$. For our purposes the \textbf{Liouville closure} of $H$ (inside $\T$) is the (unique) smallest differential subfield of $\T$ that contains $H$ and is Liouville closed. 
As we mentioned in the introduction a differential subfield $H$ of $\T$ is Liouville closed if:
\begin{enumerate}
	\item[(LC1)] $H$ is real closed,
    \item[(LC2)] $\exp(H)\subseteq H$ and,
    \item[(LC3)] for every $g\in H$ there is $g\in H$ such that $g'=h$.  
\end{enumerate}
We call (LC3) being closed under integrals, or antiderivatives. Recall every element of $\Texp$ with constant term $0$ has an antiderivative inside $\Texp$. Moreover since $\int 1 = \ell_1$ then every element of $\Texp$ has an antiderivative in $\Texp(\ell_1)\subseteq L_1$. The following diagram commutes 

\xymatrix{
& & & & & L_n\ar[rr]^-{\int} \ar[d]_{\upper^n} & & L_{n+1}\\
& & & & & \Texp \ar[r]^{\times \mathfrak{e}_n} & \Texp \ar[r]^{\int}& L_1\ar[u]_{\ \downer^n}
}
\noindent
where $\mathfrak{e}_n:= e_0e_1\cdots e_{n-1}$, $e_0 = x$ and $e_{n+1}= \exp(e_n)$.
The above diagram follows from the commutative diagram

\xymatrix{
& & & & & L_n\ar[rr]^-{\partial} \ar[d]_{\upper^n} & & L_n\\
& & & & & \mathbb{T}^E \ar[r]^{\partial} & \mathbb{T}^E\ar[r]^{\div \mathfrak{e}_n} & \mathbb{T}^E \ar[u]_{\ \downer^n}
}
\noindent
which represents the way the derivative is defined.
\begin{lemma}\label{tL21}
Let $K$ be a $\til$-closed differential subfield of $\mathbb{T}$ containing $\R$. Then there is $F$, a $\til$-closed differential field extension of $K$ inside the Liouville closure of $K$ such that every element of $K$ has an antiderivative inside $F$. 
\end{lemma}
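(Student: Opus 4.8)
The plan is to close $K$ off under antiderivatives one exponential level at a time, transporting the operator machinery of the previous subsections through the isomorphisms $L_n\to\Texp$, $g\mapsto g\upper^{n}$, and keeping $\til$-closedness under control by means of the support estimates already developed. Since the Liouville closure $L$ of $K$ is closed under integration it contains $\ell_1=\int 1$ and, inductively, $\ell_{n+1}=\int(1/(\ell_1\cdots\ell_n))$; and by Corollary~\ref{logs1} (applied, when $K\not\subseteq\Texp$, in the depth-$m$ picture of $K$) the field $K(\{\ell_n\}_n)$ is again a $\til$-closed subfield of $\mathbb{T}$, and a differential one, since $\ell_n'=1/(\ell_1\cdots\ell_{n-1})$ already lies in it; so I may assume $\ell_n\in K$ for all $n\ge 1$. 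A routine, if slightly delicate, bookkeeping then reduces matters to $K$ of bounded depth, say $K\subseteq L_n$ — one has $K=\bigcup_m(K\cap L_m)$ with each $K\cap L_m$ a $\til$-closed differential subfield of $\mathbb{T}$ containing $\R$ and all $\ell_j$, and one assembles the extension of $K$ as an increasing union of extensions of these. Working in $L_n$, I regard $(L_n,\delta)$ — where $\delta$, the natural transerial derivation of the Hahn-field tower $L_n=\R[[\ell_n^{\R}]]^E$, corresponds under $\upper^{n}$ to $x\,\tfrac{d}{dx}$ on $\Texp$ (with $\ell_n\mapsto x$) — as the ambient ``$\smallk_{*}$''; the derivation $\partial$ of $\mathbb{T}$ then restricts on $L_n$ to $\fe\,\delta$ with $\fe:=1/(\ell_1\cdots\ell_n)\in K$, and since also $\fe^{-1}=\ell_1\cdots\ell_n\in K$, our $K$ is a $\delta$-differential, still $\til$-closed, subfield of $L_n$.

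Now I would apply the last lemma of the subsection on operators of the form $(I-a\partial)\inv$ on $\smallk_{*}$ — the one producing a differential subfield closed, level by level, under $(I-a\delta)\inv$ and under $\delta$-antiderivatives of all its elements with $1\notin\supp$ — to $(L_n,\delta)$ with $K$ in the role of $F$; this is legitimate since $K$ is truncation closed, contains $\R$, and $K\cap\fM$ is neat (which for a $\til$-closed \emph{field} follows from the $\il$-data together with division). It gives the smallest such $\delta$-differential subfield $\widehat{K}\supseteq K$ of $L_n$, truncation closed, with $\widehat{K}\cap\fM=K\cap\fM$. To upgrade this to $\til$-closedness I would show $\supp\widehat{K}=\supp K$. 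As $K$ is a $\til$-closed field, $\supp K=G(K)$ is a subgroup of $\fM$ (Corollary~\ref{tL3}); field operations and the operators $(I-a\delta)\inv$ evidently keep supports inside it; and for a $\delta$-antiderivative of an element $g$ with $\supp g\subseteq\supp K$ and $1\notin\supp g$, the explicit operator expansion writes it as a summable family of terms $\bigl(\text{operator in }\delta\text{ and }c(\monm)^{-1}\bigr)(g_{\monm})\,\monm$ with $\monm\in\supp g$, where $c(\monm)=\monm^{\dagger}=\delta(\log\monm)$, and $\log\monm\in K$ by $\il$-closedness of $K$ (using $\ell_{n+1}\in K$ to absorb the level-$0$ factor of $\monm$); thus $\supp c(\monm)\subseteq\supp K$, $\supp c(\monm)^{-1}\subseteq G(K)=\supp K$, and, $\supp K$ being a group, each term and hence the whole antiderivative again has support in $\supp K$. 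This is the integration analogue of Lemma~\ref{tL6}; with it, $\supp\widehat{K}=\supp K$, and since the $\il$-data of each $\monm\in\supp K$ lie in $K\subseteq\widehat{K}$, the field $\widehat{K}$ is $\il$-closed, hence $\til$-closed.

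Finally, $\widehat{K}\subseteq L$: it is generated from $K$ by field operations, by the maps $(I-a\delta)\inv$ — whose values are the solutions of linear equations $h-a\,h'=f$ with $a\prec 1$, and such a solution lies in any Liouville closed overfield (being obtained by exponentiation and integration) — and by antiderivatives, all available in $L$. And $\widehat{K}$ contains a $\partial$-antiderivative of every $g\in K$: such an antiderivative is a $\delta$-antiderivative of $\fe^{-1}g=(\ell_1\cdots\ell_n)g\in K$, and subtracting off the coefficient of $\fe^{-1}g$ at the monomial $1$ — whose $\delta$-antiderivative is a scalar multiple of $\ell_{n+1}\in K$ — reduces to the $\delta$-antiderivative of a $1$-free element of $K$, which lies in $\widehat{K}$. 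So $F:=\widehat{K}$ (and then the increasing union over the depths of $K$) works. \emph{The main obstacle} is the middle step: pushing the operator-and-Zorn argument of that last lemma through the rescaled (but still transerial) derivation $\fe\,\delta$ while tracking supports — essentially, proving the integration analogue of Lemma~\ref{tL6}; once that estimate is in hand, the rest is the bookkeeping sketched above.
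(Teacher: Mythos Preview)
Your approach is essentially the paper's: adjoin all $\ell_n$ (via Corollary~\ref{logs1} and Corollary~\ref{tL13}), transport each finite-depth piece to $\Texp$ by $\upper^{n}$, close off under the operator machinery of the ``$(I-a\partial)^{-1}$ on $\smallk_*$'' subsection, transport back, and take the union. The only substantive difference is which lemma you invoke and how you then handle integration. The paper applies Lemma~\ref{opClosure} (closure only under the operators $(I-a\partial)^{-1}$) and afterwards writes down the antiderivative explicitly as $\sum_{a\ne 0}(I-a\partial)^{-1}(f_a)\exp(a)$, plus the $\R\ell_{n+1}$-term for the constant coefficient; you instead invoke the unnumbered lemma just after Lemma~\ref{opClosure} (which already closes under $\delta$-antiderivatives of $1$-free elements) and then convert $\partial$-antiderivatives to $\delta$-antiderivatives via $\partial=\fe\,\delta$. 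Both routes work. Note, incidentally, that the conclusion $F_\infty\cap\fM=F\cap\fM$ of that lemma already gives $\supp\widehat{K}=\supp K$ (for truncation closed fields over $\R$ one has $\supp=\text{leading monomials}=\text{monomials in the field}$), so your separate support argument is correct but not strictly needed.

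There is one bookkeeping inconsistency worth fixing. After reducing to $K\subseteq L_n$, you twice appeal to $\ell_{n+1}\in K$: once to get $\log\monm\in K$, and once at the end for the $\delta$-antiderivative of the constant term. But $\ell_{n+1}\notin L_n$, so neither claim holds at level $n$. The first use is harmless, because what you actually need is $c(\monm)\in K$, and this follows directly: with $\monm=\ell_n^{\,r}\fm_1\cdots\fm_k$ one has $c_0(\ell_n^{\,r})=r\in\R\subseteq K$, and for $i\ge 1$ the $\il$-data give $\int c_i(\fm_i)\in K$, hence $c_i(\fm_i)=\delta\bigl(\int c_i(\fm_i)\bigr)\in K$ since $K$ is a $\delta$-differential field; no $\ell_{n+1}$ is required. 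The second use is genuine but is absorbed at the union stage: the constant-term contribution $b\,\ell_{n+1}$ lies in $K\cap L_{n+1}\subseteq\widehat{K\cap L_{n+1}}$, so the final $F=\bigcup_m\widehat{K\cap L_m}$ does contain the full antiderivative even though $\widehat{K\cap L_n}$ alone need not. With that adjustment your argument goes through.
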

\begin{proof}
We may assume that $K$ contains $\ell_n$ for each $n$ by corollary \ref{logs1} and \ref{tL13}. Let $K_n$ be the image under $\downer^n$ of $F_\infty^n$, the smallest differential subfield of $\Texp$ containing $K\upper^n\cap \Texp$ such that for every $m$, $F_m:=F_\infty^n\cap E_m$ is closed under $(I-a\partial)\inv$ for all $a\in F_m$ with $a\prec E_{n-1}$. Then $K_\infty:= \bigcup_n K_n$ is our desired extension. To show this let $f\in K\cap L_n$ be such that $\mathfrak{e}_n(f\upper^n )= \sum_{a\in A_n}f_a\exp(a)$ for $f_a\in E_{n-1}$. since $K$ is $\til$-closed then $f_a\downer^n$ is in $K$ for each $a\in A_n$, and thus $g:=\sum_{a\in A_n^{\neq}}(I-a\partial)\inv(f_a)\exp(a)\in F_\infty$. Let $b$ be the constant term of $f$ then the antiderivatives of $f$ have the form $g\downer b\ell_{n+1} +r$ where $r\in \R$. Since $K_n$ is $\til$-closed for every $n$, then $K_\infty$ is $\til$-closed.         
\end{proof}

We are ready to prove Theorem 1.2
\begin{theorem} Let $K$ be a $\til$-closed differential subfield of $\T$ containing $\R$. Then the Liouville closure of $K$ is also $\til$-closed.
\end{theorem}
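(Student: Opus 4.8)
Recall that for an $H$-subfield of $\T$ containing $\R$, being Liouville closed is equivalent to the conjunction of (LC1) being real closed, (LC2) $\exp(H)\subseteq H$, and (LC3) being closed under integration. Write $L(K)$ for the Liouville closure of $K$. The plan is to build a tower $K=K_0\subseteq K_1\subseteq\cdots$ by repeatedly applying three closure operations, each already known to preserve $\til$-closedness, and then to check that its union is $L(K)$.

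The three operations, defined on $\til$-closed differential subfields $H$ of $\T$ containing $\R$, are: $\operatorname{rc}(H)$, the relative algebraic closure of $H$ in $\T$ (equal to the real closure of $H$ inside $\T$ since $\T$ is real closed; it is again a differential subfield, as in characteristic $0$ the derivative of an element algebraic over $H$ lies in the field it generates over $H$; and it is $\til$-closed by Lemma~\ref{tL15}); $\operatorname{ec}(H)$, the $\exp$-closure of $H$ inside $\T$, realized as $\bigcup_m E_m$ with $E_0=H$ and $E_{m+1}=E_m(\exp(E_m))$, each $E_m$ being $\til$-closed by Corollary~\ref{tL19} and a differential subfield because $\partial\exp(f)=f'\exp(f)$; and $\operatorname{ic}(H)$, the $\til$-closed differential extension $F$ of $H$ from Lemma~\ref{tL21}, contained in $L(H)$, in which every element of $H$ has an antiderivative, and which (its construction beginning by adjoining all $\ell_n$, as in the proof of Lemma~\ref{tL21}) contains $\{\ell_n:n\in\N\}$ and hence has infinite depth.

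Now put $K_0:=K$, $K_{m+1}:=\operatorname{ic}\big(\operatorname{ec}(\operatorname{rc}(K_m))\big)$, and $L:=\bigcup_m K_m$. Each $K_m$ is a $\til$-closed differential subfield of $\T$ containing $\R$, and an easy induction gives $K_m\subseteq L(K)$: if $K_m\subseteq L(K)$ then $\operatorname{rc}(K_m)\subseteq L(K)$ by (LC1), $\operatorname{ec}(\operatorname{rc}(K_m))\subseteq L(K)$ by (LC2), and $\operatorname{ic}(\operatorname{ec}(\operatorname{rc}(K_m)))\subseteq L(K)$ since $L(K)$ is a Liouville closed field containing $\operatorname{ec}(\operatorname{rc}(K_m))$, hence contains its Liouville closure, which in turn contains the $F$ supplied by Lemma~\ref{tL21}. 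Thus $K\subseteq L\subseteq L(K)$ and $L$ is a differential subfield of $\T$ containing $\R$. It is real closed, because the finitely many coefficients of any $P\in L[X]$ lie in a common $K_m$ and a root of $P$ in $\T$ then lies in $\operatorname{rc}(K_m)\subseteq K_{m+1}$. It is closed under $\exp$, because any $f\in L$ lies in some $\operatorname{rc}(K_m)$, whence $\exp(f)\in\operatorname{ec}(\operatorname{rc}(K_m))\subseteq K_{m+1}$. It is closed under integration, because any $f\in L$ lies in some $\operatorname{ec}(\operatorname{rc}(K_m))$, whence $f$ has an antiderivative in $\operatorname{ic}(\operatorname{ec}(\operatorname{rc}(K_m)))=K_{m+1}$. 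So $L$ is Liouville closed, and minimality of $L(K)$ forces $L=L(K)$.

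Finally, $L$ is $\til$-closed: truncation closedness passes to unions immediately; and for $\il$-closedness, note that $L$ has infinite depth (it contains $\ell_1=\int 1$, $\ell_{m+1}=\int\ell_m^{-1}$ and $x=\exp(\ell_1)$), so it suffices to check for each $n$ that $L\upper^n\cap\Texp=\bigcup_{m\ge 1}(K_m\cap L_n)\upper^n$ is $\il$-closed in $\Texp$; this holds because for $m\ge 1$ the field $K_m$ has infinite depth, so $\il$-closedness of $K_m$ says exactly that $(K_m\cap L_n)\upper^n=K_m\upper^n\cap\Texp$ is $\il$-closed in $\Texp$, and a union of $\il$-closed subsets of $\Texp$ is $\il$-closed. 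The conceptual part of the argument, the interleave-and-take-limit, is routine given Lemmas~\ref{tL15}, \ref{tL19} and~\ref{tL21}; the steps requiring care are verifying that $\operatorname{rc}$, $\operatorname{ec}$, $\operatorname{ic}$ genuinely return differential subfields of $\T$ contained in $L(K)$, and — the point I expect to be the main obstacle — that the depth-sensitive notion of $\il$-closedness is inherited by the union, which works precisely because $L$ and the $K_m$ with $m\ge 1$ have infinite depth, so that $\il$-closedness localizes over the levels $L_n$ and the failure of $V\upper$ alone to be $\til$-closed (corrected by the $\R x$ summand in Lemma~\ref{tL12.1}) does not intrude.
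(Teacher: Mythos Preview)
Your argument is correct and rests on the same three lemmas (\ref{tL15}, \ref{tL19}/\ref{tL20}, \ref{tL21}) as the paper, but the organization is genuinely different. The paper does not build a tower: it invokes Zorn's lemma to choose a maximal $\til$-closed differential subfield $F$ of the Liouville closure $L$ containing $K$, and then uses maximality of $F$ together with those three lemmas to conclude that $F$ is already real closed, $\exp$-closed, and closed under integration, hence $F=L$. This buys brevity and, more importantly, completely sidesteps the depth bookkeeping you carry out at the end (verifying that $\il$-closedness of the union reduces to $\il$-closedness of each $K_m\upper^n\cap\Texp$ once the $K_m$ have infinite depth). Your explicit $\omega$-tower is more constructive and makes visible exactly how the Liouville closure is reached; the paper's maximality trick is the standard shortcut when one only needs the conclusion and not an explicit presentation of $L(K)$ as a limit.
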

\begin{proof}
 Let $L$ be the Liouville closure of $K$ inside $\T$. Using Zorn's lemma we fix $F$ a maximal $\til$-closed differential subfield of $\T$ containing $\R$. By \ref{tL20}  $F$ is exponentially closed, by \ref{tL15} $F$ is real closed, and by Lemma \ref{tL21} $F$ is closed under integrals and thus equal to the Liouville closure of $K$.
\end{proof}

\section{Remarks and Comments}
The proof of Lemma \ref{tL16} is in taken mostly from \cite{D} where a version of it appears with a slightly weaker conclusion. There are notions of composition for generalized power series see for example \cite{H}. One may ask what are (if any) the truncation preservation results that can be obtained under closing certain truncation closed subsets of Hahn fields under composition. Another question that arises from our work is what is a natural way of extending our results to the field of surreal numbers with a derivation, such as the one suggested by Berarducci and Mantova \cite{BM}. Further more we still want to determine if it is possible to find truncation preservation results for extensions given just by antiderivatives of elements of a truncation closed (differential) field.


\end{document}